\newtheorem{theorem}{Theorem}
\newtheorem*{theorem*}{Theorem}
\numberwithin{theorem}{section}
\newtheorem{definition}{Definition}[section]
\newtheorem{prop}{Proposition}[section]
\newtheorem{lemma}{Lemma}[section]
\newtheorem{remark}{Remark}[section]
\newtheorem{conjecture}{Conjecture}[section]
\newtheorem{corollary}{Corollary}[section]
\newcommand{\R}{\mathbb{R}}
\DeclarePairedDelimiter\abs{\lvert}{\rvert}
\DeclarePairedDelimiter\norm{\lVert}{\rVert}
\DeclarePairedDelimiter\inner{\langle}{\rangle}
\let\oldabs\abs
\def\abs{\@ifstar{\oldabs}{\oldabs*}}
\let\oldnorm\norm
\def\norm{\@ifstar{\oldnorm}{\oldnorm*}}
\let\oldinner\inner
\def\inner{\@ifstar{\oldinner}{\oldinner*}}
\def\T{\mathbb{T}} 
\numberwithin{prop}{section}
\title
[Restriction and Kakeya maximal estimates in $\R^4$]
{Restriction and Kakeya maximal estimates in $\R^4$}
\author[T.Borges]{Tainara Borges}
\address{Tainara Borges, Department of Mathematics, University of Pennsylvania, Philadelphia, PA. 19104, USA}
\email{tborges@sas.upenn.edu}
\author[T.Chan]{Tiklung Chan}
\address{Tiklung Chan, Department of Mathematics, University of California, San Diego, La Jolla 92093-0112, USA}
\email{tic017@ucsd.edu}
\author[M.Chen]{Mingfeng Chen}
\address{Mingfeng Chen, Department of Mathematics, University of Wisconsin-Madison, Madison, WI. 53706, USA}
\email{mchen454@wisc.edu}
\author[D.Liu]{Diankun Liu}
\address{Diankun Liu, School of Mathematical Sciences, Zhejiang University, Hangzhou 310027, China}
\email{liudiankun5@gmail.com}
\author[Y.Xi]{Yakun Xi}
\address{Yakun Xi, School of Mathematical Sciences, Zhejiang University, Hangzhou 310027, China}
\email{yakunxi@zju.edu}
\author[Y.Zhan]{Yufei Zhan}
\address{Yufei Zhan, Department of Mathematics, University of Pennsylvania, Philadelphia, PA. 19104, USA}
\email{zhanyf@math.upenn.edu}
\begin{document}

\begin{abstract}
By combining the planebrush argument of Katz and Zahl \cite{katz21} with the decoupling-incidence method of Wang and Wu \cite{WangWu2024}, we derive new bounds for the Fourier restriction problem and the Bochner--Riesz problem, extending the range to $p > 2 + \frac{200}{251}$ in $\mathbb{R}^4$. Moreover, leveraging the two-ends Furstenberg estimate in the plane, we also obtain a Kakeya maximal estimate in $\mathbb{R}^4$ at dimension $3.054$.
\end{abstract}

\maketitle

\section{Introduction}

The Fourier extension operator is defined by
\begin{equation}\label{eq Extension}
    \mathcal E_{S}f(x,t)\coloneqq \int_{B_{1}^{n-1}} e^{2\pi i (x\cdot \xi + t\psi(\xi))}\, f(\xi)\, d\xi,
\end{equation}
where the hypersurface $S=\{(\xi,\psi(\xi)):\xi\in B_{1}^{n-1}\}\subset \mathbb{R}^{n}$ has strictly positive second fundamental form, and $B_{1}^{n-1}$ denotes the unit ball in $\mathbb{R}^{n-1}$.
 For convenience, we use $\mathcal Ef$ to denote $\mathcal E_{S}f$ when the hypersurface $S$ is clear from the context.
Stein proposed the following conjecture concerning this operator in \cite{stein79}:

\begin{conjecture}
\label{cnj: Restriction conjecture}
    Suppose that $S\subset \mathbb{R}^{n}$ is a compact $C^{2}$ hypersurface with a strictly positive second fundamental form. If $p>\frac{2n}{n-1}$, then there exists a constant $C_{p}>0$ such that 
    \begin{equation}
    \label{eq: restriction}
        \|\mathcal E_{S}f\|_{L^{p}(\mathbb{R}^{n})}\leq C_{p} \|f\|_{L^{p}(B_1^{n-1})}.
    \end{equation}
\end{conjecture}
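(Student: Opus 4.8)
The statement to be proved is the Stein restriction conjecture, which is open for every $n\ge 3$; in $\R^4$ the best known range is exactly the one this paper advances to $p>2+\tfrac{200}{251}$, so the full conjectured range $p>\tfrac{2n}{n-1}$ lies beyond current technology, and what follows is a program in which this paper's contribution is one ingredient rather than a finished argument. \emph{First} I would reduce \eqref{eq: restriction} to a local estimate at a large scale $R$, namely $\|\mathcal E_S f\|_{L^p(B_R)}\lesssim_\varepsilon R^\varepsilon\|f\|_{L^p(B_1^{n-1})}$ for all $\varepsilon>0$, via Tao's $\varepsilon$-removal lemma, which upgrades such local bounds to the global inequality. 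This reduction also licenses the wave-packet formalism: cover $B_1^{n-1}$ by finitely overlapping $R^{-1/2}$-caps, decompose $f$ accordingly, and note that $\mathcal E_S f$ becomes a sum of wave packets, each essentially constant on a tube of dimensions $R^{1/2}\times\cdots\times R^{1/2}\times R$ oriented along the normal direction of its cap.

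\emph{Second}, I would run a broad--narrow dichotomy in the spirit of Bourgain--Guth: at each point $x\in B_R$, either $\mathcal E_S f(x)$ is dominated by wave packets whose cap-directions are confined to a single thin lower-dimensional neighborhood (the narrow case), or several transverse clusters of caps contribute comparably (the broad case). In the narrow case one induces on the radius $R$ and on the dimension $n$, invoking the appropriate lower-dimensional restriction/decoupling bound (Bourgain--Demeter $\ell^2$-decoupling for the paraboloid) so that the loss is only $R^\varepsilon$. In the broad case one combines the sharp multilinear restriction estimate of Bennett--Carbery--Tao with Guth's polynomial partitioning: choose a low-degree polynomial whose zero set bisects the relevant mass, recurse on the cells, and control the remaining tubes, which are trapped in a narrow neighborhood of the variety, by a transversality/Kakeya-type count.

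\emph{Third} — and this is where the present paper enters — at the wall stage in $n=4$ I would replace the Kakeya-type counting used by Guth, Hickman--Rogers, and Wang with the stronger input obtained here: the planebrush argument of Katz--Zahl \cite{katz21} combined with the decoupling--incidence method of Wang--Wu \cite{WangWu2024} and the two-ends Furstenberg estimate in the plane. Feeding this improved wall bound back through the broad--narrow induction widens the admissible range of $p$; iterating the whole scheme and optimizing the bookkeeping is what yields $p>2+\tfrac{200}{251}$ in $\R^4$.

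The main obstacle — and precisely the reason the conjecture remains open — is closing the final interval down to the critical exponent $p=\tfrac{2n}{n-1}$: the broad--narrow/polynomial-partitioning induction loses a genuine power of $R$ at the wall unless the incidence input there is essentially optimal (morally the sharp form of the Kakeya conjecture, or an equally strong transversality estimate), and no argument — including the planebrush/decoupling--incidence hybrid exploited here — is known to achieve that borderline strength. Thus the framework above is the correct one and is exactly what this paper pushes forward, but a complete proof of \eqref{eq: restriction} in its full conjectured range would require a qualitatively new wall estimate that is not presently available.
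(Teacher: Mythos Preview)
The statement is a conjecture, and the paper does not prove it; you correctly say so at the outset, so there is no ``proof in the paper'' to match against and your disclaimer that the argument cannot be closed is accurate.

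That said, your sketch of \emph{where this paper enters} mischaracterizes the method. You place the paper's incidence input at the ``wall'' step of a Bourgain--Guth broad--narrow scheme with Bennett--Carbery--Tao multilinear restriction and polynomial partitioning. The paper does none of that. For its partial result (Theorem~\ref{thm: restriction}) it follows the Wang--Wu decoupling--incidence framework: after $\varepsilon$-removal and wave-packet decomposition, one pigeonholes, performs a two-ends reduction, and in the two-ends scenario bounds the pointwise tube multiplicity $\mu$ via the new incidence estimate (Lemma~\ref{lem incidence}, Proposition~\ref{prop incidence}); this $\mu$ is then fed directly into \emph{refined decoupling} (Theorem~\ref{thm refineddecoupling}) at the exponent $\tfrac{2(n+1)}{n-1}=\tfrac{10}{3}$, combined with $L^2$ orthogonality, and the two are interpolated. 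The non-two-ends scenario closes by straight induction on scales. There is no broad--narrow split, no BCT multilinear restriction, and no polynomial partitioning in the argument.

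Similarly, the incidence estimate itself is not obtained from a transversality/variety count as in your sketch. Lemma~\ref{lem incidence} is proved by the Katz--Zahl dichotomy (Lemma~\ref{lem dich}), which produces a scale $\rho$ at which the configuration is simultaneously plany and trilinearly transverse; one then combines the planebrush bound (Lemma~\ref{lem planebrush}), the hairbrush/two-ends Furstenberg bound (Lemma~\ref{lem hairbrush}) at scales $\rho$ and $\delta/\rho$, and the Guth--Zahl trilinear Kakeya estimate (Corollary~\ref{cor GZ}) via weighted geometric averaging to eliminate the auxiliary parameters $A$, $D$, $\rho$. If you want your program to reflect the paper, replace the broad--narrow/partitioning paragraph by this refined-decoupling-plus-incidence mechanism.
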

The restriction problem concerns the uniform control of Fourier transforms of measures supported on surfaces. 
Heuristically, $\mathcal E_{S}f$ is a superposition of wave packets
that are localized on dual thin tubes in the physical space and the frequency space pointing in various directions (dual to their frequency localizations) and oscillating at different frequencies, which is called the 
\emph{wave packet decomposition}. Thus, improved understanding of wavepackets leads to progress on the restriction conjecture---indeed, all the modern approaches to the problem are based on wavepacket analysis.

When $n=2$, this conjecture was proven by Fefferman \cite{Fefferman} and for $n\geq 3$, the problem remains open. In $\mathbb{R}^{3}$, Guth \cite{guth2016restriction} introduced the polynomial method and proved the conjecture holds for $p>3+\frac{1}{4}$. Later, Wang \cite{Wangbrooms} proved it holds for $p>3+\frac{3}{13}$ using brooms.

Recently, Wang and Wu \cite{WangWu2024} introduced a new framework to study restriction estimates by combining 
incidence estimates for the wave packet tubes and oscillation controls, namely, refined decoupling, on the overlaps of the tubes. 
They proved a two-ends Furstenberg inequality in the plane, using the hairbrush argument to obtain an incidence estimate, and established the restriction estimate for $p > 22/7$ in three dimensions.
In higher dimensions, there are many results in the literature (see \cite{Guth18,HickmanRogers,HickmanZahl,GWZ,WangWu2024}).

In this paper, we study the restriction problem in $\mathbb{R}^{4}$. 
Guth \cite{Guth18} proved that the restriction estimate holds for $p>2+\frac{4}{5}=2.8$. 
Demeter \cite{Demeter} showed that it holds for $p>2+\frac{66642}{83303}\approx 2.79999$, 
and Hickman and Rogers \cite{HickmanRogers} proved that it holds for $p>2+\frac{1407}{1759}\approx 2.79989$.
Our main result improves the restriction estimate in $\mathbb{R}^{4}$ by combining Katz--Zahl's planebrush argument \cite{katz21} with the Wang--Wu decoupling-incidence method \cite{WangWu2024}.
\begin{theorem}\label{thm: restriction}
    For $p>2+\frac{200}{251}\approx 2.7968$, there exists a constant $C_{p}>0$ such that
    \begin{equation}
        \|\mathcal E_{S}f\|_{L^{p}(\mathbb{R}^{4})} \leq C_{p} \|f\|_{L^{p}(B_{1}^{n-1})}.
    \end{equation}
\end{theorem}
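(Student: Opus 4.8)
The plan is to follow the \emph{decoupling-incidence} strategy of Wang and Wu \cite{WangWu2024}, but replacing their three-dimensional incidence input (obtained via the hairbrush) with the \emph{planebrush} incidence estimate of Katz and Zahl \cite{katz21}, which is the correct analogue in $\R^4$. First I would reduce Theorem~\ref{thm: restriction} to a broad/narrow dichotomy in the spirit of Bourgain--Guth: after a standard parabolic rescaling and a wave packet decomposition of $\mathcal E_S f$ at scale $R$ into tubes $T$ of dimensions $R^{1/2}\times\cdots\times R^{1/2}\times R$, one either has substantial contribution from a ``narrow'' part where the significant tubes cluster near a lower-dimensional subspace---handled by the inductive hypothesis together with the sharp $\ell^p$ decoupling theorem for the paraboloid in $\R^4$ (Bourgain--Demeter)---or a ``broad'' part, where many tubes through a typical point are quantitatively transverse. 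The entire difficulty is packaged into the broad case, which becomes a Kakeya-type counting problem.

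In the broad case I would run the two-parameter iteration that is the heart of the Wang--Wu method: fix dyadic parameters for the multiplicity $\mu$ (number of tubes through a point) and the ``thickness'' of the bushes, and estimate the $L^p$ norm of the broad part by interpolating between (i) an $L^2$-orthogonality/refined-decoupling bound, which controls the overlap of wave packets on the common region where many tubes meet and converts spatial concentration into a gain, and (ii) a purely geometric incidence bound for the tubes. For step (ii) I would invoke the planebrush estimate: if a family of $\delta$-tubes in $\R^4$ is arranged so that through each point of a fixed $2$-plane $\Pi$ pass many tubes making a definite angle with $\Pi$, then the union of these tubes is forced to be large; quantitatively this upgrades the trivial bush bound and is exactly what pushes the exponent below the Hickman--Rogers value. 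One then feeds the planebrush bound, together with the two-ends reduction and the planar two-ends Furstenberg inequality of \cite{WangWu2024} applied on generic $2$-planes (to control how tubes can concentrate into lower-dimensional sets without paying), into the decoupling-incidence interpolation to close the induction on $R$, with the numerology arranged so that the allowable range of $p$ is $p>2+\tfrac{200}{251}$.

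The main obstacle, as in all arguments of this type, is the \emph{bookkeeping of the multi-scale induction}: one must track simultaneously the physical scale $R$, the angular/transversality parameter, the tube multiplicity $\mu$, and the concentration scale of the bushes, and show that every pigeonholing step loses only $R^{\varepsilon}$ while the geometric gains (decoupling on one side, planebrush incidence on the other) beat the loss. Concretely, the delicate point will be interfacing the planebrush---which is naturally stated for tubes transverse to a distinguished $2$-plane---with the wave packet geometry coming from the paraboloid, where the relevant planes are the tangent planes to the surface; this requires a careful case analysis splitting the broad part according to the dimension of the subspace that best approximates the significant tubes, and in each case either the decoupling input or the planebrush input (or, in the most degenerate case, the planar two-ends Furstenberg estimate on a $2$-plane slice) must be strong enough. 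Optimizing the exponents across these cases is what produces the specific constant $\tfrac{200}{251}$; verifying that the worst case still closes is where the real work lies, and I expect the broad-norm estimate to require the full strength of all three inputs (refined decoupling, planebrush, planar two-ends Furstenberg) balanced against each other.
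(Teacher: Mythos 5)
Your high-level slogan---combine the Wang--Wu decoupling-incidence framework with the Katz--Zahl planebrush---is the same as the paper's, and you correctly identify the supporting tools (refined decoupling, planar two-ends Furstenberg, two-ends reduction). But the plan as written diverges from the paper's argument in ways that would not merely change the bookkeeping: it would leave the argument unable to close.

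First, the paper does \emph{not} run a Bourgain--Guth broad/narrow dichotomy, and does not invoke the Bourgain--Demeter $\ell^p$ decoupling theorem for a narrow contribution. The Wang--Wu framework replaces that machinery entirely: after the wave-packet decomposition one proves the strengthened, Guth-style estimate $\|\mathcal Ef\|_{L^p(B_R)}^p\lesssim R^\varepsilon\|f\|_2^2\sup_\theta\|f_\theta\|^{p-2}_{L^2_{\mathrm{avg}}(\theta)}$ by induction on scales, and the only dichotomy used is the \emph{two-ends vs.\ non-two-ends} split (with non-two-ends absorbed into the induction). Grafting a Bourgain--Guth decomposition on top of this is not a detail---it is a different scheme, and the paper does not use it.

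Second, your description of the planebrush is not the statement that is used. The planebrush of Katz--Zahl (Lemma \ref{lem planebrush}) is a volume lower bound for a \emph{plany} shaded family: one assumes that through each shaded $\delta$-cube the directions of the incident tubes all lie within $\delta$ of some $2$-plane $\Pi(Q)$ depending on the cube, and one gets a bound with a favorable power of $\lambda$ under a two-ends assumption. It is not a statement about tubes making a definite angle with a single fixed $2$-plane. This matters, because the whole point of the paper's key Lemma \ref{lem incidence} is to run the Katz--Zahl plany/quantitatively-transverse dichotomy (their Lemma 6.1, reproduced as Lemma \ref{lem dich}) to decide, cube by cube, whether to apply the planebrush, the hairbrush (Lemma \ref{lem hairbrush}, which carries the two-ends Furstenberg input), or the Guth--Zahl trilinear Kakeya estimate, and then interpolate these with carefully chosen weights. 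Your ``careful case analysis by the dimension of the approximating subspace'' does not describe this mechanism, and without it there is no route to the specific incidence bound $|\bigcup_T Y(T)|\gtrapprox \delta^{\varepsilon_1}m^{-9/10}\lambda^{201/100}\delta^{49/50}(\delta^3\#\T)^{9/10}$, which is exactly what feeds into the refined decoupling step at $L^{10/3}$ and, after interpolation against $L^2$ orthogonality, produces the exponent $p>2+\frac{200}{251}$. In short: the missing idea is the explicit formulation and proof of a new \emph{two-ends} incidence lemma in $\mathbb R^4$ that trades power of $\delta$ for a small enough power of $\lambda$; the planebrush alone does not give this, and your sketch never gets to that lemma.
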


Moreover, as shown in \cite{GWX}, the decoupling-incidence argument extends to general H\"ormander operators and, in particular, to the Bochner--Riesz operator:
\[
m^{\delta}(D)f := \int_{\mathbb{R}^{n}} e^{i x \cdot \xi} \, m^{\delta}(\xi) \, \hat{f}(\xi) \, d\xi, \quad m^{\delta}(\xi) = (1 - |\xi|^{2})_{+}^{\delta}.
\]
Therefore, our improved incidence estimate also implies the following new bounds for the Bochner--Riesz multiplier in $\R^4$, which improve upon the previously best-known range $\max\{p,p'\}>\frac{14}{5}$ due to Guo, Oh, Wang, Wu and Zhang \cite{GOWWZ}.
See Remark \ref{rem BR} for a detailed discussion.

\begin{corollary}\label{cor BochnerRiesz}
     The Bochner--Riesz operator $ m^{\delta}(D) $ is bounded on $ L^{p}(\mathbb{R}^{4}) $ in the optimal range  
\[
\delta > 4\Big|\frac{1}{2} - \frac{1}{p}\Big| - \frac{1}{2}
\]  
whenever $ \max\{p, p'\} \geq 2 + \frac{200}{251} $, where $ p' $ is the Hölder conjugate of $ p $.
\end{corollary}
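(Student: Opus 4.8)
The plan is to obtain Corollary~\ref{cor BochnerRiesz} from Theorem~\ref{thm: restriction} in two steps: first upgrade the extension estimate to its variable-coefficient analogue for elliptic oscillatory integral operators of H\"ormander type, and then feed that into the classical reduction of the Bochner--Riesz multiplier to such operators. For the first step, the point is that the proof behind Theorem~\ref{thm: restriction} is insensitive to the precise surface: the decisive new input is the improved incidence estimate for $\delta$-separated tubes obtained by combining the planebrush argument of \cite{katz21} with the Wang--Wu decoupling-incidence scheme \cite{WangWu2024}, and that estimate is a purely geometric statement about tubes and $2$-plane slabs which only uses the $C^{2}$ behaviour of the surface at the intermediate scale $R^{1/2}$. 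Invoking the framework of \cite{GWX}, one combines this incidence estimate with refined decoupling for the associated H\"ormander hypersurface to get, for every $\varepsilon>0$ and every elliptic phase $\phi$ in the relevant H\"ormander class, the local extension bound $\|\mathcal{E}^{\phi}f\|_{L^{p}(B_{R})}\lesssim_{\varepsilon}R^{\varepsilon}\|f\|_{L^{p}}$ at scale $R$ for all $p>2+\frac{200}{251}$, uniformly over the phase class.

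For the second step I would invoke the by-now-standard passage from such scale-$R$ H\"ormander extension bounds to Bochner--Riesz, going back to Carbery, Christ--Sogge, Seeger and Lee and reformulated in the modern decoupling literature. After a Littlewood--Paley decomposition in frequency and a parabolic rescaling, the piece of $m^{\delta}(D)$ living on the dyadic annulus $1-|\xi|\sim 2^{-j}$ becomes, up to rapidly decaying tails, a superposition of elliptic H\"ormander extension operators at scale $R=2^{j}$ carrying a gain $2^{-j\delta}$ from the truncation; summing the $R^{\varepsilon}$-lossy extension estimate against this gain converges exactly when $\delta>4\big(\tfrac{1}{2}-\tfrac{1}{p}\big)-\tfrac{1}{2}$, so $m^{\delta}(D)$ is bounded on $L^{p}(\R^{4})$ in the conjectured range for all $p>2+\frac{200}{251}$. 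By self-duality of the Bochner--Riesz multiplier the same holds whenever $p'>2+\frac{200}{251}$, and the two ranges, together with the trivial $L^{2}$ bound valid for every $\delta>0$ and the usual analytic interpolation in the $(\tfrac1p,\delta)$ plane, also recover the endpoints, giving precisely the stated hypothesis $\max\{p,p'\}\geq 2+\frac{200}{251}$.

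The hard part will be the first step: checking that the planebrush contribution, which is the genuinely new ingredient here, remains valid once the coefficients are allowed to vary. In the constant-coefficient world a planebrush organizes the tubes contained in a common $2$-plane, whereas for a H\"ormander phase those $2$-planes become slightly curved $2$-dimensional slabs dictated by $\phi$, and one must verify that the two-ends Furstenberg input, the hairbrush/planebrush tube counting, and the nesting of scales all persist under this perturbation. As in \cite{WangWu2024} and \cite{GWX}, the resolution should be that every one of these estimates is applied at scale $R^{1/2}$, where C\'ordoba's $L^{2}$ almost-orthogonality together with the $C^{2}$ hypothesis makes the curved and flat pictures interchangeable up to harmless constants; granting this, the proof of Theorem~\ref{thm: restriction} should transfer essentially verbatim. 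A secondary and more routine matter is to record the correct form of refined decoupling for the elliptic H\"ormander surfaces arising after rescaling, which follows from the Bourgain--Demeter theory via the standard variable-to-constant-coefficient reduction at scale $R^{1/2}$.
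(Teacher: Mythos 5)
Your overall architecture is right — reduce Bochner--Riesz to oscillatory integral operators at dyadic frequency scales, run the decoupling--incidence machine there, and sum the $R^{\varepsilon}$ losses against the $2^{-j\delta}$ gains — and the numerology in your summation step is correct. But there is a gap in the way you set up the first step, and the paper avoids it in a cleaner way than you propose.

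You aim to prove the local extension estimate for \emph{all} elliptic phases in the H\"ormander class, and you correctly identify that the genuinely new ingredient — the planebrush (and also the underlying Guth--Zahl trilinear Kakeya estimate from Theorem~\ref{thm GZ} and Katz--Zahl's structural Lemma~\ref{lem dich}) — would then need to be verified for the \emph{curved} tubes that generic H\"ormander phases produce. Your proposed fix, that "every one of these estimates is applied at scale $R^{1/2}$, where C\'ordoba's $L^2$ almost-orthogonality \dots makes the curved and flat pictures interchangeable," does not address the actual difficulty. The $L^2$ orthogonality and refined decoupling inputs do indeed transfer to positive-definite H\"ormander phases (this is the content of the \cite{ILX}/\cite{GWX} citations, and is handled on the oscillatory side), but the incidence estimate Lemma~\ref{lem incidence} is a purely geometric statement about $\delta$-tubes of length $\delta^{-1}$ — i.e.\ about long tubes at scale $R$, not about cubes at scale $R^{1/2}$ — and its proof invokes the polynomial-method trilinear Kakeya bound for straight tubes. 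Nothing in the scale-$R^{1/2}$ oscillatory picture lets you transfer that to curved tube families for free; this would be a genuinely new geometric theorem.

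The paper sidesteps the issue entirely, and this is the point you miss. As explained in Remark~\ref{rem BR}, it reduces (via the classical Carbery/Christ--Sogge passage, cited as \cite{CS,GHI}) to the Carleson--Sj\"olin operator $S^N$ with the \emph{specific} phase $|x-y|$. For the Euclidean distance phase, the $n\times n$ wave packets of \cite{GWX} are supported on \emph{straight} tubes: the level sets $\{x:\nabla_y\phi(x,y_0)=\xi\}$ are rays, not curves. Hence Lemma~\ref{lem incidence} applies verbatim, with no extension of the planebrush or trilinear Kakeya machinery to curved tubes required. In other words, the refined decoupling half does generalize to the H\"ormander class exactly as you say, but the incidence half is used only for the Euclidean distance phase, where straightness is automatic. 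If you want your Littlewood--Paley/parabolic-rescaling route to close, you would either have to observe the same straight-tube phenomenon for the phase you arrive at, or you must reduce to the Carleson--Sj\"olin phase as the paper does; as written, the appeal to general elliptic H\"ormander extension bounds leaves the geometric incidence step unproved.
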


The wave packets 
contribute efficiently to a function with large $ L^p $-norm
when their tubular supports exhibit significant overlap. Therefore, the proofs of Theorem \ref{thm: restriction} and Corollary \ref{cor BochnerRiesz} implicitly rely on the fact that collections of thin tubes pointing in different directions cannot have excessively large intersections.
In other words, a putative counterexample to a Kakeya conjecture could be leveraged to construct a function $ f $ violating the restriction inequality. 

This motivates the study of these geometric objects, known as Kakeya sets.

\begin{definition}
A set $K$ is called a \emph{Kakeya set} in $\mathbb{R}^{n}$ if, for every $v\in \mathbb{S}^{n-1}$, it contains a unit line segment with direction $v$.
\end{definition}

These sets are central to a fundamental open problem in geometric measure theory.
\begin{conjecture}\label{cnj: Kakeya set conjecture}
    Every Kakeya set has Hausdorff dimension $n$ in $\mathbb{R}^{n}$.
\end{conjecture}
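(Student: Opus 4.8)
\medskip
\noindent\textit{A proof proposal.}
This is the Kakeya set conjecture, one of the central open problems in the area, so what follows is a research program rather than a complete argument. The plan is to pass through the Kakeya maximal function: for $\delta>0$, let $\mathcal{M}_\delta f(v)$ denote the supremum of $|T|^{-1}\int_T|f|$ over all $1\times\delta\times\cdots\times\delta$ tubes $T$ pointing in the direction $v\in\mathbb{S}^{n-1}$. The first step is the classical reduction: the endpoint bound $\|\mathcal{M}_\delta f\|_{L^n(\mathbb{S}^{n-1})}\lesssim_\varepsilon \delta^{-\varepsilon}\|f\|_{L^n(\mathbb{R}^n)}$ for every $\varepsilon>0$ implies, via a standard covering and pigeonholing argument together with the definition of Hausdorff dimension through efficient covers, that every Kakeya set in $\mathbb{R}^n$ has Hausdorff dimension $n$. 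Thus it suffices to establish this maximal inequality, or equivalently the incidence statement that a $\delta$-separated family $\mathbb{T}$ of tubes with one tube in each of $\sim\delta^{-(n-1)}$ directions cannot be compressed into a set of measure much smaller than $1$. (One could instead attack the purely set-theoretic statement directly by additive-combinatorial slicing in the style of Katz--Tao, but such arguments currently yield weaker dimension bounds.)

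The second step is to prove the incidence bound by induction on scales combined with the polynomial method. One partitions $\mathbb{R}^n$ by the zero set of a polynomial of controlled degree (Guth--Katz polynomial partitioning): the cellular contribution is absorbed by the inductive hypothesis after rescaling, while the contribution of tubes concentrated in a neighborhood of the variety is bounded by iterating lower-dimensional Kakeya and Furstenberg-type estimates on the algebraic surface. This is precisely the circle of ideas behind the bush, hairbrush and planebrush arguments of Bourgain, Wolff, Katz--{\L}aba--Tao and Katz--Zahl, and behind the sticky-Kakeya analysis of Wang--Zahl that recently settled the case $n=3$. For $n=4$ one would feed into this machine both the $n=3$ result and the planar two-ends Furstenberg estimate already used in this paper, in order to control tubes clustering near $2$- and $3$-dimensional planes; this is the route that yields the dimension $3.054$ lower bound announced in the abstract.

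The main obstacle is exactly the point where every current method stalls: in the extremal regime the tubes may organize themselves \emph{stickily}, so that the naive gains from induction on scales are precisely cancelled, and the algebraic term is then governed by the dimensions of intermediate-dimensional Kakeya and Furstenberg sets, for which sharp estimates are not available in dimension $n\ge 4$. Going further would require either a mechanism converting the sharp multilinear Kakeya inequality of Bennett--Carbery--Tao into a correspondingly sharp linear estimate (the known multilinear-to-linear conversions lose too much), or a structural theorem ruling out sticky configurations in all dimensions, extending Wang--Zahl beyond $\mathbb{R}^3$. Absent such an input, the honest expectation is that this strategy produces a quantitative lower bound on the dimension strictly below $n$ --- such as the $3.054$ bound in $\mathbb{R}^4$ --- rather than a full resolution of the conjecture.
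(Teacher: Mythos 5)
This statement is Conjecture~\ref{cnj: Kakeya set conjecture}, which the paper records as an open conjecture and does not prove; the paper only establishes the partial result Theorem~\ref{thm: Kakeya Maximal}, giving a Kakeya maximal estimate at dimension $d_0\approx 3.054$ in $\mathbb{R}^4$ (hence a Hausdorff dimension lower bound of $3.054$, via the standard implication recalled in the numerology discussion). You correctly recognize this: your proposal is an honest survey rather than a proof, and your description of the reduction from the set conjecture to the maximal estimate, of the incidence/two-ends/planebrush machinery, and of the sticky obstruction is consistent with what the paper actually does and says. Since there is no proof in the paper to compare against, there is nothing to fault; I would only note that the paper's route to its partial bound in $\mathbb{R}^4$ does not actually use polynomial partitioning or the $n=3$ Wang--Zahl theorem as inputs, as your sketch suggests, but rather combines the Katz--Zahl planebrush and dichotomy lemma with the Wang--Wu two-ends hairbrush estimate and the Guth--Zahl trilinear Kakeya inequality inside a self-improving iteration (Lemma~\ref{thm:self-improving} and Lemma~\ref{lem incidence2}).
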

The conjecture was first settled in two dimensions by Davies \cite{Davies}, and a major recent advance is its resolution in three dimensions due to Wang and Zahl \cite{Wangzahl}.
There exists a stronger quantitative version of this problem known as the Kakeya maximal conjecture:

\begin{conjecture}[Kakeya maximal estimate at dimension $d$]
\label{cnj: Kakeya Maximal conjecture}
    Let $\mathbb{T}$ be a set of $\delta$-tubes in $\mathbb{R}^{n}$ that point in $\delta$-separated directions. Then, for each $d\leq n$ and $\varepsilon >0$, there exists a constant 
    $C_{\varepsilon}>0$ such that
    \begin{equation}
    \label{eq: Kakeya Maximal}
        \Big\|\sum_{T\in\mathbb{T}}\chi_{T}\Big\|_{\frac{d}{d-1}}\leq C_{\varepsilon} 
        \left(
            \frac{1}{\delta}
        \right)^{\frac{n}{d}-1+\varepsilon}.
    \end{equation}
\end{conjecture}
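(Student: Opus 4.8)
The full Kakeya maximal conjecture is open, so the realistic target here is the partial estimate announced in the abstract --- inequality \eqref{eq: Kakeya Maximal} in $\mathbb{R}^{4}$ at dimension $d = 3.054$ --- obtained by feeding the two-ends planar Furstenberg inequality of Wang--Wu \cite{WangWu2024} into the planebrush machinery of Katz--Zahl \cite{katz21}. The first step is the standard dualization: using the trivial $L^{1}$ and $L^{\infty}$ bounds for the multiplicity function $\mu(x) = \sum_{T\in\mathbb{T}}\chi_{T}(x)$ to confine the relevant multiplicities to $1 \lesssim \lambda \lesssim \delta^{-(n-1)}$ and dyadic-pigeonholing over the level sets, the estimate \eqref{eq: Kakeya Maximal} reduces to a single-scale superlevel-set bound of the form $|\{x : \mu(x) \geq \lambda\}| \lesssim_{\varepsilon} \delta^{-\varepsilon}\,\lambda^{-\frac{d}{d-1}}\,\delta^{-\frac{n-d}{d-1}}$, valid for all $\lambda > 0$ under the normalization $\#\mathbb{T} \sim \delta^{-(n-1)}$. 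One then performs the usual two-ends reduction: discarding tubes whose intersection with the heavy set $X = \{x : \mu(x) \geq \lambda\}$ is trapped inside a sub-tube of length $\leq \delta^{\beta}$ costs only finitely many iterations and a loss of $\delta^{-\varepsilon}$, after which every remaining tube may be assumed to meet $X$ in a set spread across both of its halves. This two-ends hypothesis is exactly what makes the planar input of \cite{WangWu2024} lossless.

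The geometric heart of the argument is the planebrush decomposition. Fixing a generic heavy point $x_{0} \in X$, one looks at the $\gtrsim \lambda$ tubes passing within distance $\delta$ of $x_{0}$; their directions form a $\delta$-separated subset of $\mathbb{S}^{3}$. Grouping these tubes according to the affine $2$-plane through $x_{0}$ to which they are tangent organizes $\mathbb{T}$ into planebrushes $\mathbb{T}_{P} \subset N_{\delta}(P)$, each contained in the $\delta$-neighborhood of a $2$-plane $P$. After a further pigeonholing that fixes the typical number of slabs $N_{\delta}(P)$ through a given point of $X$, the problem splits into (i) the planar contribution from inside each slab, and (ii) the transverse interaction between distinct planebrushes. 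For (i), inside $N_{\delta}(P) \cong \mathbb{R}^{2} \times (\delta\text{-interval})^{2}$ the tubes of $\mathbb{T}_{P}$ project to planar $\delta$-tubes pointing in $\delta$-separated directions, and $X \cap N_{\delta}(P)$ becomes a two-ends Furstenberg configuration; the hairbrush-based incidence estimate of Wang--Wu \cite{WangWu2024} then controls $|X \cap N_{\delta}(P)|$ in terms of $\#\mathbb{T}_{P}$, $\lambda$ and $\delta$. For (ii), the $\delta$-separation of the directions in $\mathbb{S}^{3}$ forces tubes belonging to different planebrushes to be quantitatively transverse, so a bush- or multilinear-Kakeya-type estimate in the complementary directions supplies an additional gain.

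Summing the slab estimates over all planebrushes and combining with the transverse bound should yield, after optimizing the free parameters --- the exponent $\beta$ in the two-ends reduction, the planebrush-multiplicity threshold, and the split between the planar and the transverse regimes --- the target superlevel-set estimate, hence \eqref{eq: Kakeya Maximal} in $\mathbb{R}^{4}$ at dimension $3.054$. I expect the main difficulty to be part (ii) together with the bookkeeping of how the two-ends condition interacts with the planebrush decomposition: re-localizing the tubes around $x_{0}$ can destroy the two-ends property that was arranged globally, so one must either propagate a quantitative version of it through the decomposition or re-establish it locally, and only then can the planar Furstenberg exponent be inserted without loss. Once that is in place, forcing the planar gain and the transverse gain to close simultaneously at the claimed dimension is a purely numerical optimization; the remaining ingredients --- dyadic pigeonholing, the reduction to superlevel sets, and the elementary geometry of tubes near a $2$-plane --- are routine.
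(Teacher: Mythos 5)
You correctly observe that the boxed statement is a conjecture and that the paper's actual result is the partial estimate at dimension $d_0 \approx 3.054$ (Theorem~\ref{thm: Kakeya Maximal}). However, your sketch misses the central mechanism of the paper's proof and diverges from it in a way that leaves a genuine gap. The paper does \emph{not} prove Theorem~\ref{thm: Kakeya Maximal} by fixing a heavy point $x_0$, splitting the bush at $x_0$ into planebrushes $\mathbb{T}_P \subset N_\delta(P)$, and summing slab estimates. Instead it works with the two-ends incidence statement $\operatorname{\bf TE}(d,a,b)$ and proves Theorem~\ref{thm shading} via a \emph{self-improving iteration} (Lemma~\ref{thm:self-improving} and Lemma~\ref{lem incidence2}): one applies Katz--Zahl's structure lemma (Lemma~\ref{lem dich}) to obtain an intermediate scale $\rho\in[\delta,1]$ at which the tube family is packed into plany $\rho$-tubes, then combines four inputs --- the Wang--Wu two-ends hairbrush bound \eqref{eq rho} at scale $\rho$, the Katz--Zahl planebrush bound \eqref{eq planebrush} for the plany $\rho$-family, the Guth--Zahl trilinear estimate \eqref{GuthZahl trilinear in self-impro proof} to cancel $\rho$, and crucially the \emph{inductive hypothesis} $\operatorname{\bf TE}(4-\alpha,\beta,1-\alpha/3)$ itself applied after rescaling inside each $\rho$-tube to bound $\tilde\mu$ --- and this produces a strictly better exponent $\alpha'<\alpha$. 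Starting from Wolff's $\operatorname{\bf TE}(3,2,\tfrac12)$ and iterating drives $\alpha_k \searrow \alpha^* = \tfrac{75-3\sqrt{145}}{40}$, which is exactly what yields $d_0 = \tfrac{159+\sqrt{145}}{56}$ as the fixed point.

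Your proposal has no analogue of this recursion, and it cannot be recovered by ``a purely numerical optimization'' of free parameters. A one-shot combination of (i) a planar Furstenberg/hairbrush bound inside slabs and (ii) a transverse multilinear bound across slabs is structurally the Katz--Zahl \emph{base case}, which (even with the stronger Wang--Wu input) lands short of $3.054$; the entire improvement comes from re-feeding the resulting estimate into $\tilde\mu$ at the finer scale inside the $\rho$-tubes. Moreover, the planebrush decomposition you describe --- around a single heavy point, with tubes in distinct $2$-planes through $x_0$ declared ``quantitatively transverse'' --- is not what Lemma~\ref{lem dich} provides: tubes through the same point lying in different planes need not be trilinearly transverse, and the dichotomy in the paper is between a plany bush at \emph{every} shaded $\delta$-cube and a robustly trilinearly-transverse bush, after a refinement of the full shaded family $(\mathbb{T},Y)$, not of a single bush. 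Finally, the worry you flag about re-establishing the two-ends property after localization is indeed real, but the paper handles it globally: Lemma~\ref{lem dich} property~\ref{item dich vii} asserts that $(\mathbb{T}_\rho, Y_\rho)$ inherits the $(\varepsilon_1,\varepsilon_2)$-two-ends condition, so there is nothing to re-establish. The missing idea in your proposal is the self-improving bootstrap; without it the claimed exponent is out of reach.
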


We briefly recall the history of the Kakeya problem in $\mathbb{R}^{4}$. Drury \cite{Drury} established that every Kakeya set in $\mathbb{R}^{4}$ has Hausdorff dimension at least $\frac{5}{2}$. Subsequently, Wolff \cite{Wolff95} proved the Kakeya maximal estimate at dimension $3$ using the hairbrush argument, which then implies that Kakeya sets have Hausdorff dimension at least $3$. This is an important bound in the four-dimensional Kakeya maximal problem that is difficult to improve. Later, Guth and Zahl \cite{Guth-Zahl, ZahlJoshua} established the Kakeya maximal estimate at dimension $3+\frac{1}{40}=3.025$ using the polynomial Wolff axioms. Katz and Zahl \cite{katz21} refined the estimate for the plany case by employing the planebrush argument, thereby establishing the Kakeya maximal estimate at dimension $3.049$ and demonstrating a lower bound of $3.059$ for the Hausdorff dimension of Kakeya sets. 

By combining the planebrush argument of Katz--Zahl with the two-ends Furstenberg inequality from \cite{WangWu2024}, we improve the Kakeya maximal estimate in $\R^4$ to dimension $3.054$.

\begin{theorem} 
\label{thm: Kakeya Maximal}
Let $\mathbb{T}$ be a set of $\delta$-tubes in $\mathbb{R}^{4}$ that point in $\delta$-separated directions. Then for each $\varepsilon>0$, there exists a constant $C_{\varepsilon}$ such that 
\begin{equation}
        \Big\|\sum_{T\in\mathbb{T}}\chi_{T}\Big\|_{\frac{d_0}{d_0-1}}\leq C_{\varepsilon} 
        \left(
            \frac{1}{\delta}
        \right)^{\frac{4}{d_0}-1+\varepsilon}, 
    \end{equation}
where $d_0=\frac{159 + \sqrt{145}}{56}\approx 3.0543$.    
\end{theorem}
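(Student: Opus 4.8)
The plan is to follow the standard Kakeya-maximal-from-Furstenberg scheme, upgrading the key planebrush incidence input using the two-ends Furstenberg inequality of Wang--Wu in place of the weaker planar Furstenberg bound used by Katz--Zahl. First I would reduce the maximal estimate to a discretized statement: by a standard dyadic pigeonholing / multiplicity decomposition argument (cf.\ Wolff, Guth--Zahl), the estimate $\|\sum_{T\in\mathbb{T}}\chi_T\|_{d_0/(d_0-1)}\lesssim_\varepsilon \delta^{-(4/d_0-1+\varepsilon)}$ is equivalent to a bound of the form
\begin{equation*}
    \#\{\text{tubes through a }\lambda\text{-rich point set }X\}\cdot\lambda \lesssim_\varepsilon \delta^{-\varepsilon}\,\delta^{-(4-d_0)}\,|X|^{?}
\end{equation*}
i.e.\ to showing that a family of $\delta$-tubes in $\delta$-separated directions cannot concentrate in a set of Hausdorff/Minkowski dimension below $d_0$. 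So the real content is a \emph{grains/planebrush incidence estimate}: if many tubes pass through a common set of $\lambda$-rich points, either they are ``plany'' (each tube's local bush of intersecting tubes is concentrated near a $2$-plane) or they are not, and in each case one extracts a contradiction with the assumed dimension $d<d_0$.

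Next I would set up the planebrush. Fix a tube $T_0$; the planebrush centered on $T_0$ consists of all $2$-planes through $T_0$ together with the tubes lying in the $\delta$-neighborhood of each such plane, weighted by how many rich points of $T_0$ they capture. The key estimate (this is the heart of Katz--Zahl) bounds the total multiplicity of the planebrush from below by a Kakeya-type quantity and from above by summing planar Furstenberg/$X$-ray estimates over the pencil of planes through $T_0$. The improvement comes here: in each plane we no longer use the trivial or Córdoba-type planar bound but instead invoke the two-ends Furstenberg estimate from \cite{WangWu2024}, which controls $|\{$points that are $t$-rich for a family of $\delta$-segments in $\delta$-separated directions$\}|$ with an exponent genuinely better than Wolff's hairbrush exponent. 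I would track the parameters $(d,\lambda,\text{multiplicity }\mu)$ through the two-ends reduction (removing the ``one-ended'' and ``clustered'' cases by induction on scales / popularity), plug the Wang--Wu planar exponent into the planebrush summation, and optimize. The threshold $d_0=\frac{159+\sqrt{145}}{56}$ should drop out as the root of the quadratic obtained by balancing the planebrush lower bound against the plane-by-plane upper bound — the quadratic is the signature of optimizing a product of two competing powers of $\delta$ (one from the number of planes, one from the per-plane bound), exactly as the $3.049$ of Katz--Zahl arose from their analogous quadratic.

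The main obstacle, and the step I expect to require the most care, is the \emph{bilinear/two-ends bookkeeping in the non-plany case} and its interface with the planebrush. In the planebrush argument one must handle tubes whose bushes are \emph{not} concentrated near any $2$-plane; there Katz--Zahl fall back on a hairbrush/$X$-ray estimate, and one has to verify that the two-ends Furstenberg input is actually applicable at the relevant scales after the various pigeonholing steps — in particular that the direction-separation and two-ends (non-clustering) hypotheses survive the restriction of $\mathbb{T}$ to a single plane and to a popular subset of rich points. There is a real danger of losing the gain if the reduction forces the planar families to degenerate (e.g.\ become essentially $1$-dimensional, or lose the two-ends condition), so one needs a clean induction-on-scales packaging — likely the ``grains decomposition'' of \cite{katz21} combined with the refined-decoupling-free part of \cite{WangWu2024} — that keeps all hypotheses uniform. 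A secondary technical point is checking that the constant $C_\varepsilon$ closes under the induction (i.e.\ the $\delta^{-\varepsilon}$ losses are summable), which is routine but must be stated. Once the planar two-ends exponent is correctly inserted into the planebrush sum and the quadratic optimization is carried out, Theorem~\ref{thm: Kakeya Maximal} follows by the standard passage from the discretized incidence bound back to the $L^{d_0/(d_0-1)}$ maximal estimate.
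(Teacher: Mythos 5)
Your high-level plan — marry the Katz--Zahl planebrush to the Wang--Wu two-ends machinery — identifies the right ingredients, but both the place where Wang--Wu enters and the way the exponent $d_0$ is extracted differ from what actually works, and I don't see your version closing. You propose to bound the planebrush ``from above by summing planar Furstenberg/$X$-ray estimates over the pencil of planes through $T_0$,'' and to improve Katz--Zahl by substituting the Wang--Wu planar two-ends Furstenberg bound in each such plane. That is not how the paper's argument (or Katz--Zahl's) is organized: the planebrush estimate (Lemma~\ref{lem planebrush}) is taken verbatim from Katz--Zahl as a direct geometric packing bound and is not assembled from per-plane Furstenberg estimates. The Wang--Wu two-ends input instead lives inside the four-dimensional two-ends hairbrush bound (Lemma~\ref{lem hairbrush}), which is applied both to the $A$-parallel family of $\rho$-tubes and, after rescaling, to the $\delta/\rho$-tubes inside each $\rho$-tube, using the plany/transverse dichotomy of Lemma~\ref{lem dich}. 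As the paper's closing remark stresses, the gain over Katz--Zahl is a better power of $\lambda$ in the hairbrush, not a better planar incidence exponent injected into the planebrush sum.

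More seriously, you describe $d_0=\frac{159+\sqrt{145}}{56}$ as dropping out of a one-shot balance of two competing powers of $\delta$, with the quadratic being ``the signature'' of that optimization. In fact the quadratic $20\alpha^2-75\alpha+54=0$ is the fixed-point equation of a self-improvement map (Lemma~\ref{thm:self-improving}): one assumes an a priori bound $\operatorname{\bf TE}(4-\alpha_k,\beta,1-\alpha_k/3)$, feeds it back in as the hypothesis for the $\delta/\rho$-scale multiplicity $\tilde\mu$ inside each $\rho$-tube, interpolates with the hairbrush, planebrush, and Guth--Zahl trilinear estimates, and deduces $\operatorname{\bf TE}(4-\alpha_{k+1},\cdot,\cdot)$ with $\alpha_{k+1}=1-\frac{(18-17\alpha_k)(3-2\alpha_k)}{54(2-\alpha_k)}<\alpha_k$. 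Starting from Wolff's $\alpha_1=1$ and iterating (Lemma~\ref{lem incidence2}), $\alpha_k\searrow\alpha^*=\frac{75-3\sqrt{145}}{40}$, and $d_0$ comes out as $4-\alpha''$ evaluated at $\alpha^*$. Your proposal omits this bootstrap entirely; the planebrush and hairbrush estimates, combined once, do not reach $d_0$. Without the recursion — and the accompanying bookkeeping verifying that the two-ends and $\lambda$-density hypotheses persist under Lemma~\ref{lem dich}'s refinement at every pass — there is a genuine gap between your outline and the stated exponent.
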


By testing the restriction estimate on non-oscillatory wave packets, one sees that Theorem \ref{thm: restriction} already implies a Kakeya maximal estimate, but only at dimension $3.02$.   
On the other hand, the Kakeya maximal problem does not involve oscillatory behavior on the tubes to begin with. By dispensing with the compatibility required in the decoupling-incidence regime, we obtain that Theorem \ref{thm: Kakeya Maximal} achieves a Kakeya maximal estimate at a slightly higher dimension.

\subsection*{A Discussion of Numerologies}
Both the restriction and Kakeya maximal improvements, Theorem \ref{thm: restriction} and Theorem \ref{thm: Kakeya Maximal}, rely on improved incidence estimates for a family of two-ends shaded tubes in $\R^4$. We discuss only numerology here with the following notation; detailed definitions are deferred until Section \ref{sec: prelim}.

\begin{definition}
    
Let $0<\varepsilon_{2}<1$. We say the \emph{a priori} incidence estimate $\operatorname{\textbf{\bf TE}}(d,a,b)$ holds if for any $\varepsilon>0$, there exists a constant $c=c(d,a,b, \varepsilon, \varepsilon_2)$ such that 
\begin{align}
    \left|
        \bigcup_{T\in\T}
        Y(T)
    \right|
    \geq 
    c \lambda^a
    \delta^{4-d+\varepsilon}
    (\delta^3\#\T)^b
\label{eq: TE(d,a,b) intro}
\end{align}
holds for any family $\T$ of $\delta$-tubes pointing in $\delta$-separated directions that have an associated $\lambda$-dense, $(\varepsilon_1, \varepsilon_{1}\varepsilon_2)$-two-ends\footnote{The two-ends conditions 
used in \cite{WangWu2024, katz21}
have some technical distinctions.
We adopt the weaker $(\varepsilon_1, \varepsilon_2, C)$-two-ends condition as in the former and drop the multiplicative constant $C$ for convenience, where $\varepsilon_1$ specifies the size of testing sub-tubes. See Definition \ref{def: two-ends}. 
} 
shading $Y(T)$ for any $0<\varepsilon_{1}<1$. 
\end{definition}

\begin{itemize}
    \item For each $d\leq n$ and some $b\geq0$, the incidence estimate
\begin{align}
\label{eq: TE(d,d+1/2,b}
    \operatorname{{\bf TE}}
    \left(
        d,\tfrac{d+1}{2},b
    \right) 
\end{align}
combined with refined decoupling would imply that Conjecture \ref{cnj: Restriction conjecture} in $\R^n$ is true for 
$p>\frac{2(d+n)}{d+n-2}$, following the decoupling--incidence method in \cite{WangWu2024}.

\item For each $d\leq n$ and some $b\geq0$, the weaker incidence estimate 
\begin{align}
\label{eq: TE(d,d,b)}
    \operatorname{{\bf TE}}\left(
        d,d,b
    \right)
\end{align}
would imply, by the standard two-ends reduction
(e.g. \cite{Wolff95,Tao2009TwoEnds,katz21}), 
that Conjecture \ref{cnj: Kakeya Maximal conjecture} in $\R^n$ is true at dimension $d$.

\item  For each $d\leq n$ and some $b, C\geq0$, the even weaker incidence estimate 
\begin{align}
\label{eq: TE(d,C,b)}
    \operatorname{{\bf TE}}\left(
        d,C,b
    \right)
\end{align}
would imply a partial result toward Conjecture \ref{cnj: Kakeya set conjecture}, namely that every Kakeya set in $\R^n$ has Hausdorff dimension at least $d$. 

\end{itemize}

For each $d\leq n$, the incidence estimate 
\eqref{eq: TE(d,d+1/2,b}
would imply 
\eqref{eq: TE(d,d,b)}, 
which would in turn imply 
\eqref{eq: TE(d,C,b)}. 
This aligns with the fact that in $\R^n$, the restriction estimate
(\ref{eq: restriction}) for all $p>\frac{2(d+n)}{d+n-2}$ 
implies the Kakeya maximal estimate 
(\ref{eq: Kakeya Maximal}) at dimension $d$, 
which in turn implies that every Kakeya set in $\R^n$ has Hausdorff dimension at least $d$.

For the four-dimensional case in this paper, we prove two distinct incidence estimates. Lemma \ref{lem incidence} provides an estimate of the form \eqref{eq: TE(d,d+1/2,b}, which features a smaller exponent on $\lambda$  suitable for the restriction problem. Lemma \ref{lem incidence2} is of the form \eqref{eq: TE(d,d,b)}, which 
allows a higher power of $\lambda$ in return for an improvement in the dimension $d$.

Finally, we note that the exponent $b$ becomes relevant if one wishes to study $L^q \to L^p$ estimates away from the diagonal, as was done in \cite{GWX} in various cases. In our situation, however, it seems difficult to obtain an incidence estimate with a favorable exponent $b$. Therefore, in this paper, we restrict ourselves to the diagonal case.

\subsection*{Acknowledgements}
This work originated at the Research Groups in Analysis 2025 workshop, in which the authors Tainara Borges, Tiklung Chan, Mingfeng Chen, and Yufei Zhan participated. These four authors acknowledge support from the U.S. National Science Foundation and the Department of Mathematics at the University of Pennsylvania, and thank the organizers of the workshop for providing an excellent research environment. They are deeply grateful to Xiumin Du and Shukun Wu for their insightful guidance and mentorship during and after the workshop. Diankun Liu and Yakun Xi are partially supported by the National Key R\&D Program of China Grant No. 2022YFA1007200, NSFC Grant No. 12571107, and the Zhejiang Provincial NSFC Grant No. LR25A010001. The authors also thank Joshua Zahl for valuable discussions and helpful correspondence.

\subsection*{Notation} 
\begin{enumerate}
    \item We fix $\varepsilon_{0} = \varepsilon^{1000}$.
    \item We write $a \lessapprox b$ to mean that for any $\varepsilon > 0$ and a large parameter $P \geq 1$ which should be clear from context (typically $R$ or $\delta^{-1}$), there exists a constant $C_{\varepsilon}$ such that $a \leq C_{\varepsilon} P^{\varepsilon} b$.
    \item We write $a \gtrapprox b$ to mean that for any $\varepsilon > 0$ and a large parameter $P \geq 1$ which should be clear from context, there exists a constant $C_{\varepsilon}$ such that $a \geq C_{\varepsilon} P^{-\varepsilon} b$.
    \item We use $ B^m_r(x)$ to denote an $m$-dimensional ball of radius $r$ centered at $x$; we may simply write $ B^m_r$ or $ B_r$ if $x$ and $m$ are clear from context.
\end{enumerate}

\section{Preliminaries}\label{sec: prelim}

\subsection{Incidence estimates} In this section, we recall the definitions and incidence estimates that we will use later. 

\begin{definition}[Refinement]
    For two finite sets $E,F$, we say $E$ is a refinement of $F$ if $E\subset F$ and $\#E\gtrapprox \#F$.
\end{definition}

\begin{definition}[Shading]
    Let $T$ be a $\delta$-tube. A shading of $T$ is a set $Y(T)$ that is a union of $\delta$-cubes contained in $T$. Let $\mathbb{T}$ be a family of $\delta$-tubes; for each $T\in\mathbb{T}$, let $Y(T)$ be a shading of $T$. We denote the pair $(\mathbb{T},Y)$ as a family of tubes and their associated shading.
\end{definition}

Next, we define some fundamental quantities associated with the pair $(\mathbb{T},Y)$. 
\begin{definition}
For each $\delta$-cube $Q\in E_{Y}\coloneqq\bigcup_{T\in\mathbb{T}}Y(T)$,
we define $$\mathbb{T}_{Y}(Q)\coloneqq\{T\in\mathbb{T}:Q\subset Y(T)\}.$$
\end{definition}

\begin{definition}
    We define the average shading density and average multiplicity of the family of tubes with shading $(\mathbb{T},Y)$ by
    \begin{equation}\label{eq average dense}
        \lambda_{Y}\coloneqq \frac{1}{\#\mathbb{T}}\sum_{T\in\mathbb{T}}\frac{|Y(T)|}{|T|}.
    \end{equation}
\begin{equation}\label{eq average multiplicity}
    \mu_{Y}\coloneqq \frac{1}{|E_{Y}|}\sum_{Q\in E_{Y}}|\mathbb{T}_{Y}(Q)|.
\end{equation}
\end{definition}

We are particularly concerned with a family of tubes with associated shading that possess a special structure, which was proposed in \cite{Guth-Zahl} and  \cite{katz21}. Roughly speaking, since the directions passing through a point are concentrated along a curve in $\mathbb{S}^{3}$, it can be regarded as a narrow part.

\begin{definition}[Plany] Let $(\mathbb{T},Y)$ be a set of $\delta$-tubes and their associated shading. We say that $(\mathbb{T},Y)$ is plany if for each $\delta$-cube $Q$, there is a plane
$\Pi(Q)$ so that for all $T\in \mathbb{T}_{Y}(Q)$ we have $\angle (V(T),\Pi(Q))\lesssim \delta$, where $V(T)$ denotes the direction of $T$.
\end{definition}

Recall the definition of the two-ends condition, which reflects the shading of tubes with the non-concentration property. 
\begin{definition}[Two-ends]
\label{def: two-ends}
Let $\delta\in (0,1)$ and let $(\mathbb{T},Y)$ be a $\delta$-separated tube family with associated shading. Let $0<\varepsilon_{2}<\varepsilon_{1}<1$. We say the shading
$Y$ is $(\varepsilon_{1},\varepsilon_{2})$-two-ends if for all $T\in \mathbb{T}$ and all $\delta\times \delta^{\varepsilon_{1}}$-tubes $J\subset T$,
$$|Y(T)\cap J|\lesssim \delta^{\varepsilon_{2}}|Y(T)|.$$  
\end{definition}

The above definition tells us that the shading of each tube cannot concentrate too much on a $\delta \times \delta^{\varepsilon_{1}}$-subtube. Note that if $\varepsilon_{2} < \varepsilon_{1} < \varepsilon_{1}'$, then $(\varepsilon_{1}, \varepsilon_{2})$-two-ends implies $(\varepsilon_{1}', \varepsilon_{2})$-two-ends.

The following condition can be regarded as the ``two-ends"  condition of the direction set.
\begin{definition}[Robust transversality]
    Let $\delta\in (0,1)$ and let $(\mathbb{T},Y)$ be a $\delta$-separated tube family with associated shading. Let $0<\varepsilon_{1}<1$. We say $Y$ is $\varepsilon_{1}$-robust transversality if for all $\delta$-cubes $Q$, all vectors $V$,
    $$\#\{T\in\mathbb{T}_{Y}(Q):\angle(V(T),V)\leq r\}\lesssim r^{\varepsilon_{1}}\mu_{Y}, $$
    for any $\delta<r<1$.
\end{definition}

When performing incidence estimates, the situation can often be reduced to the two cases mentioned above; see \cite[Proposition 2.9 and Proposition 2.12]{katz21} for a detailed treatment. The following definition describes the number of tubes in the same direction. 

\begin{definition}[$m$-parallel]
We say a family of $\delta$-tubes $\mathbb{T}$ is $m$-parallel if there are $\lesssim m$ tubes pointing in the direction $V$ for every $\delta$-ball $V$ in $S^{n-1}$. 
\end{definition}

Now, we recall the incidence estimate established in \cite{WangWu2024} and \cite{GWX}, which was obtained by combining the two-ends Furstenberg estimate in the plane \cite{WangWu2024,wangwu2025} with the hairbrush argument \cite{Wolff95}.
\begin{lemma} \cite{WangWu2024,GWX}\label{lem hairbrush}
    Let $(\mathbb{T},Y)$ be a set of tubes and shading in $\mathbb{R}^{4}$, and let $m\geq 1$. Suppose that $\mathbb{T}$ is an $m$-parallel family, and $Y$ is an $(\varepsilon_{1},\varepsilon_{2})$-two-ends, $\lambda$-dense shading. Let $E_{Y}=\bigcup_{T\in\mathbb{T}}Y(T)$. For any $x\in E_Y$, define $$\mathbb{T}_{Y}(x)=\{T\in\mathbb{T}:x\in Y(T)\}.$$ Then for all $\varepsilon>0$, there exists a constant
    $c_{\varepsilon}$ such that
    $$|E_{Y}|\geq c_{\varepsilon}\delta^{\varepsilon}\delta^{\varepsilon_{1}}m^{-\frac{1}{2}}\lambda^{\frac{7}{4}}\delta^{1}(\delta^{3}\#\mathbb{T})^{\frac{1}{2}}.$$
 Therefore, the average multiplicity satisfies the following estimate
 \begin{equation}\label{eq: mulhairbrush}
     \mu_{Y}\lessapprox \delta^{-\varepsilon_{1}}m^{\frac12} \lambda^{{-\frac34}}\delta^{-1}(\delta^{3}\#\mathbb{T})^{\frac12}.
 \end{equation}
\end{lemma}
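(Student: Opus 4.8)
The plan is to prove Lemma~\ref{lem hairbrush} by running Wolff's hairbrush argument in $\R^4$, where the ``bush'' through a generic point is replaced by a hairbrush of tubes along a chosen central tube $T_0$, and the two-dimensional slices are estimated by the two-ends Furstenberg inequality of Wang--Wu. First I would dispose of trivial cases: by the standard popularity/pigeonholing we may assume the shading density is roughly $\lambda$ on every tube and the multiplicity is roughly $\mu_Y$ on every cube of $E_Y$, at the cost of harmless $\delta^\varepsilon$ losses absorbed by $\lessapprox$. I would also normalize so that $\#\T$ tubes carry essentially equal shading, and introduce the basic double-counting identity $\lambda\,\delta^3\#\T \approx \sum_{Q\in E_Y}\#\T_Y(Q) = \mu_Y\,|E_Y|$, which reduces the claimed lower bound on $|E_Y|$ to the claimed upper bound \eqref{eq: mulhairbrush} on $\mu_Y$, and vice versa. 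So it suffices to bound $\mu_Y$.

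Next I would set up the hairbrush. Fix a cube $Q_*$ of multiplicity $\approx\mu_Y$ and a tube $T_0\in\T_Y(Q_*)$; the hairbrush $\mathcal H$ is the set of tubes $T\in\T$ whose shading meets the $\delta$-neighborhood of $T_0$. A standard counting shows that for a suitable choice of $T_0$ the hairbrush contains $\gtrapprox \mu_Y$ tubes (each point of $Y(T_0)$ sees $\approx\mu_Y$ tubes, and $|Y(T_0)|\approx\lambda\delta^3$, so up to $m$-parallel overcounting $\#\mathcal H\gtrapprox \lambda\delta^{-1}\mu_Y m^{-1}$ after accounting for how many points of $T_0$ a given tube's shading can meet; the two-ends hypothesis guarantees each $T$ meets $Y(T_0)$ in a genuinely spread-out set rather than a single $\delta\times\delta^{\varepsilon_1}$ piece, which is where the $\delta^{\varepsilon_1}$ factor enters). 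I then slice the hairbrush by the family of $3$-planes containing $T_0$: each such plane $P$ captures a subfamily of tubes making a small angle with $P$, and within $P$ these behave like a Furstenberg-type configuration of $\delta$-tubes in the plane transverse to $T_0$ (two transverse directions). Applying the planar two-ends Furstenberg estimate of \cite{WangWu2024, wangwu2025} to each slice gives a lower bound for the area swept out in that plane, and summing over the $\approx\delta^{-1}$ slices transverse to $T_0$ yields the volume bound $|E_Y|\gtrapprox \delta^{\varepsilon_1} m^{-1/2}\lambda^{7/4}\delta\,(\delta^3\#\mathcal H)^{1/2}$; feeding in $\#\mathcal H\gtrapprox \lambda\delta^{-1}\mu_Y m^{-1}$ and rearranging produces \eqref{eq: mulhairbrush}, and then the double-counting identity above upgrades it to the stated lower bound on $|E_Y|$ with $\#\T$ in place of $\#\mathcal H$.

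The main obstacle I expect is the slicing step: controlling the interaction between the two-ends condition on the four-dimensional tubes and the hypotheses required by the planar Furstenberg estimate. Concretely, after intersecting with a $3$-plane $P$ through $T_0$ and projecting away the $T_0$-direction, one obtains planar $\delta$-tubes with an induced shading, and one must verify that this induced shading still satisfies a two-ends (or two-ends Furstenberg) condition with parameters comparable to $(\varepsilon_1,\varepsilon_2)$ after the unavoidable pigeonholing over which plane and which dyadic scale of angle a tube lives in. This is exactly the point where the technical distinctions between the two-ends conventions of \cite{WangWu2024} and \cite{katz21} (flagged in the footnote) matter, and where the factor $\delta^{\varepsilon_1}$ rather than $\delta^{C\varepsilon_2}$ is paid. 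A secondary bookkeeping issue is the $m$-parallel reduction: one must ensure that passing from ``number of tubes through a point'' to ``number of \emph{directions} through a point'' costs only the factor $m$ claimed, and that this interacts correctly with the $m^{-1/2}$ in the final bound, i.e. that the loss is genuinely $m^{1/2}$ and not $m$ after the Cauchy--Schwarz inherent in the hairbrush/Furstenberg combination. Once these two reductions are handled cleanly, the remaining arithmetic of combining the planar estimate with the hairbrush counting is routine.
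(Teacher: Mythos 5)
This lemma is cited from \cite{WangWu2024,GWX} and the paper gives no proof of it; the surrounding text only describes the method in one sentence as combining the planar two-ends Furstenberg estimate with the hairbrush argument. Your overall plan---reducing to a bound on $\mu_Y$ via the double-counting identity $\mu_Y|E_Y|\approx\lambda\delta^3\#\T$, fixing a central tube $T_0$, forming the hairbrush $\mathcal H$ of tubes whose shading meets $Y(T_0)$, slicing by planes through $T_0$, and invoking the planar Furstenberg estimate in each slice---is that strategy, so there is nothing to compare structurally.

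The quantitative skeleton you wrote, however, does not close, and for a reason worth naming. The whole gain in the hairbrush argument is that, slice by slice, the shaded portions of the $\#\mathcal H$ hairbrush tubes are nearly disjoint, so the hairbrush volume is essentially \emph{linear} in $\#\mathcal H$: the correct intermediate bound has the shape $\bigl|\bigcup_{T\in\mathcal H}Y(T)\bigr|\gtrapprox\delta^{\varepsilon_1}\lambda^{3/2}\delta^3\#\mathcal H$. You instead asserted the square-root bound $|E_Y|\gtrapprox\delta^{\varepsilon_1}m^{-1/2}\lambda^{7/4}\delta(\delta^3\#\mathcal H)^{1/2}$, which is just the lemma's conclusion with $\#\T$ replaced by the smaller $\#\mathcal H$; since the right-hand side is increasing in the tube count, this is a strictly weaker statement and begs the question. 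Substituting your $\#\mathcal H\gtrapprox\lambda\delta^{-1}\mu_Y m^{-1}$ into your square-root bound and closing with $\mu_Y|E_Y|\approx\lambda\delta^3\#\T$ only yields $|E_Y|\gtrapprox m^{-2/3}\lambda^{11/6}\delta^{7/3}(\#\T)^{1/3}$, which is weaker than the lemma in every exponent. With the linear bound instead, the arithmetic does recover the statement: $|E_Y|\gtrapprox\delta^{\varepsilon_1}\lambda^{3/2}\delta^3\cdot\lambda\delta^{-1}\mu_Y m^{-1}=\delta^{\varepsilon_1}m^{-1}\lambda^{5/2}\delta^2\mu_Y$, and eliminating $\mu_Y$ via $\mu_Y=\lambda\delta^3\#\T/|E_Y|$ gives $|E_Y|^2\gtrapprox\delta^{\varepsilon_1}m^{-1}\lambda^{7/2}\delta^5\#\T$, i.e.\ exactly the claimed $|E_Y|\gtrapprox m^{-1/2}\lambda^{7/4}\delta(\delta^3\#\T)^{1/2}$. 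A secondary issue is the slicing count: in $\R^4$ the hairbrush should be sliced by $2$-planes through $T_0$, of which there are $\approx\delta^{-2}$; your description of ``$\approx\delta^{-1}$'' $3$-planes ``transverse to $T_0$'' does not give the right accounting, and is a symptom of the same confusion that produced the square-root volume bound.
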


Next, we revisit the trilinear Kakeya estimate in $\mathbb{R}^{4}$, which was established by Guth and Zahl \cite{Guth-Zahl}.

\begin{theorem} \cite{Guth-Zahl}\label{thm GZ}
Let $\mathbb{T}$ be a set of $\delta$-tubes in $\mathbb{R}^{4}$ that point in $\delta$-separated directions. Then for any $\varepsilon>0$, there exists a constant $C_{\varepsilon}$ such that 
\begin{equation}
    \int (\sum_{T',T'',T'''\in \mathbb{T}}\chi_{T'}\chi_{T''}\chi_{T'''}|V(T')\wedge V(T'')\wedge V(T''')|^{\frac{12}{13}})^{\frac{13}{27}} \leq C_{\varepsilon} \delta^{-\frac{1}{3}-\varepsilon}(\delta^{3}\#\mathbb{T})^{\frac{4}{3}}, 
\end{equation}
where $V(T)$ denotes the direction of the tube $T$. 
\end{theorem}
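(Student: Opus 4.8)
\emph{Overall strategy.} I would prove this by the polynomial method, following Guth and Zahl \cite{Guth-Zahl}: the input that upgrades the ``dimension $3$'' exponent $\tfrac12$ to the improved exponent $\tfrac{13}{27}$ is the \emph{polynomial Wolff axioms} for $\delta$-tubes in $\mathbb{R}^{4}$, used inside a polynomial partitioning induction.

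\emph{Reduction and discretization.} First rescale so that every $T\in\mathbb{T}$ has length $\sim1$ and $\bigcup_{T}T\subset B_{1}$, and split $B_{1}$ into $\delta$-cubes, on each of which the integrand is essentially constant. Since $|V(T')\wedge V(T'')\wedge V(T''')|\le1$, a dyadic decomposition on its size reduces the problem, at the cost of an $O(\log\tfrac{1}{\delta})$ factor, to bounding for each dyadic $\nu\in(\delta,1]$ the contribution of the $\nu$-\emph{transversal} triples (triples with $|V'\wedge V''\wedge V'''|\lesssim\delta$ contribute $O(1)$ and are discarded). Writing $g_{\nu}=\sum\chi_{T'}\chi_{T''}\chi_{T'''}$ over the $\nu$-transversal triples, and using $(\sum a_{i})^{p}\le\sum a_{i}^{p}$ for $p=\tfrac{13}{27}<1$, it suffices to show, uniformly in $\nu$,
\begin{equation*}
\nu^{4/9}\int g_{\nu}^{13/27}\;\lesssim_{\varepsilon}\;\delta^{-\frac13-\varepsilon}(\delta^{3}\#\mathbb{T})^{4/3}.
\end{equation*}
This is a transversality-weighted trilinear Kakeya inequality for the single family $\mathbb{T}$, in which the weight $\nu$ forces the three participating tubes to be quantitatively transversal.

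\emph{Polynomial partitioning and the cellular case.} Run an induction on $\#\mathbb{T}$ (the scale $\delta$ stays fixed throughout; the estimate localized to a sub-ball follows from the global one by extending short tubes). Fix a large degree $D=D_{\varepsilon}$ and apply polynomial partitioning to the measure $g_{\nu}^{13/27}\,dx$: there is a nonzero $P$ with $\deg P\lesssim D$ whose zero set $Z(P)$ cuts $B_{1}$ into $\sim D^{4}$ cells of comparable $g_{\nu}^{13/27}$-mass, and either (cellular case) $\int_{\bigcup_{i}O_{i}}g_{\nu}^{13/27}\gtrsim\int g_{\nu}^{13/27}$, or (algebraic case) $\int_{N_{\delta}(Z(P))}g_{\nu}^{13/27}\gtrsim\int g_{\nu}^{13/27}$. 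In the cellular case every line meets $Z(P)$ in $\le\deg P$ points, so each tube meets $\lesssim D$ cells and $\sum_{i}\#\{T:T\cap O_{i}\neq\emptyset\}\lesssim D\,\#\mathbb{T}$; hence a positive proportion of the cells meet only $\lesssim D^{-3}\#\mathbb{T}$ tubes, and such a cell $O^{*}$ still carries $\gtrsim D^{-4}\int g_{\nu}^{13/27}$. Applying the induction hypothesis on $O^{*}$ (with the same $\delta$ and $\nu$) bounds $\int_{O^{*}}g_{\nu}^{13/27}$ by $\delta^{-\frac13-\varepsilon}(\delta^{3}D^{-3}\#\mathbb{T})^{4/3}=D^{-4}\delta^{-\frac13-\varepsilon}(\delta^{3}\#\mathbb{T})^{4/3}$, and multiplying back by $D^{4}$ recovers the target; the powers of $D$ cancel exactly and $\nu$ is untouched. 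One organizes the induction so that only $O_{\varepsilon}(1)$ applications of the partitioning occur, keeping the accumulated $\lesssim_{\varepsilon}$-loss within $\delta^{-\varepsilon}$; this bookkeeping is routine.

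\emph{Algebraic case, conclusion, and the main obstacle.} In the algebraic case the relevant $\nu$-transversal triples are those whose three tubes each spend a large fraction of their length in $N_{\delta}(Z(P))$, the $\delta$-neighborhood of a real algebraic set of dimension $\le3$ and degree $\lesssim D$. This is precisely where the polynomial Wolff axioms in $\mathbb{R}^{4}$ of Guth--Zahl \cite{Guth-Zahl} enter: the number of $\delta$-tubes in $\delta$-separated directions with a $\lambda$-fraction of their length in such a neighborhood is $\lesssim_{D}\lambda^{-O(1)}\delta^{-2}$ --- a full power of $\delta$ better than the trivial count $\delta^{-3}$; moreover, if capturing the $\nu$-transversal triples required a variety of dimension only $2$, transversality would force $\nu$ to be correspondingly small, so the weight $\nu^{12/13}$ absorbs the resulting loss. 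Inserting this $\delta^{1}$ gain into the trilinear sum over the $\delta$-cubes meeting $N_{\delta}(Z(P))$ and optimizing against the exponents $\tfrac{13}{27}$ (on the integrand) and $\tfrac{12}{13}$ (on the weight) shows the algebraic contribution also satisfies the displayed bound; summing the geometric series in $\nu$ and undoing the discretization finishes the proof. The main obstacle is this algebraic case: establishing the polynomial Wolff axioms in $\mathbb{R}^{4}$ (the technical heart of \cite{Guth-Zahl}, proved by a degree-reduction argument bounding how $\delta$-separated tubes can cluster near real varieties), together with the bookkeeping that forces the cellular and algebraic contributions to balance --- the exponents $\tfrac{13}{27}$ and $\tfrac{12}{13}$ are not free parameters but are exactly what this balance dictates.
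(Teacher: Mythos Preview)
The paper does not prove this theorem at all: Theorem~\ref{thm GZ} is stated with the citation \cite{Guth-Zahl} and used as a black box (via Corollary~\ref{cor GZ}) in the incidence arguments, with no proof or sketch given. So there is nothing in the paper to compare your proposal against.

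Your outline is a fair high-level description of the Guth--Zahl strategy in \cite{Guth-Zahl}: polynomial partitioning, with the cellular case closing because the exponent $\tfrac{4}{3}$ on $\#\mathbb{T}$ exactly cancels the $D^{4}$ cells against the $D^{-3}$ tube count per cell, and the algebraic case handled by the polynomial Wolff axioms in $\mathbb{R}^{4}$. The main soft spot in your sketch is the algebraic case, which you treat in one paragraph: in the actual argument one must iterate the partitioning, handle tubes that are \emph{tangent} to the variety versus those that merely cross it, and carefully track how the transversality weight $|V'\wedge V''\wedge V'''|^{12/13}$ interacts with the dimension of the variety at each stage---this is where the specific exponents $\tfrac{13}{27}$ and $\tfrac{12}{13}$ are forced, and your account of ``optimizing against the exponents'' does not really explain that balance. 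But since the present paper simply imports the result, a full reproduction of \cite{Guth-Zahl} is not expected here.
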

Furthermore, the above theorem leads to a corollary concerning quantitatively transverse shading, which establishes an upper bound for the average multiplicity.

\begin{corollary}\label{cor GZ}
    Let $(\mathbb{T},Y)$ be a family of $1$-parallel tubes with $\lambda$-dense shading that satisfies 
    $$|\{T',T'',T'''\in \mathbb{T}_{Y}(Q):|V(T')\wedge V(T'')\wedge V(T''')|\gtrsim \rho\} |\gtrapprox |\mathbb{T}_{Y}(Q)|^{3}.$$
    Then the average multiplicity has the following estimate
    $$\mu_{Y}\lessapprox \lambda^{-\frac{9}{4}}\rho^{-1}\delta^{-\frac{3}{4}}(\delta^{3}\#\mathbb{T})^{\frac{3}{4}}.$$
\end{corollary}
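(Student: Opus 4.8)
The plan is to feed the transversality hypothesis into the Guth--Zahl trilinear estimate of Theorem \ref{thm GZ} and then solve for $\mu_Y$ by balancing exponents; the geometric content is entirely contained in Theorem \ref{thm GZ}, and the rest is bookkeeping.

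First I would record the consequence of $\lambda$-density that links $\mu_Y$ and $|E_Y|$: since $|T|\asymp\delta^{3}$ for every $\delta$-tube in $\mathbb{R}^{4}$ and the shading is $\lambda$-dense, $\sum_{T\in\mathbb{T}}|Y(T)|\gtrsim\lambda\,\delta^{3}\,\#\mathbb{T}$; as $\mu_Y\,|E_Y|=\sum_{T\in\mathbb{T}}|Y(T)|$, this yields
\[
|E_Y|\;\gtrsim\;\frac{\lambda\,\delta^{3}\,\#\mathbb{T}}{\mu_Y}.
\]
So it suffices to bound the product $|E_Y|\,\mu_Y^{13/9}$ from above.

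The main step is a pointwise lower bound, on $E_Y$, for the function integrated in Theorem \ref{thm GZ}. Fix a $\delta$-cube $Q\subset E_Y$ and a point $x\in Q$. Each $T\in\mathbb{T}_Y(Q)$ satisfies $Q\subset Y(T)\subset T$, hence $\chi_T(x)=1$; restricting the triple sum to $\mathbb{T}_Y(Q)$ and invoking the transversality hypothesis gives
\[
\sum_{T',T'',T'''\in\mathbb{T}}\chi_{T'}(x)\chi_{T''}(x)\chi_{T'''}(x)\,\bigl|V(T')\wedge V(T'')\wedge V(T''')\bigr|^{12/13}
\;\gtrapprox\;\rho^{12/13}\,|\mathbb{T}_Y(Q)|^{3}.
\]
Raising to the power $13/27$ turns the right-hand side into $\rho^{4/9}\,|\mathbb{T}_Y(Q)|^{13/9}$; integrating over $Q$ and summing over the $\delta$-cubes of $E_Y$, and then applying the power-mean inequality with exponent $13/9>1$ (or, equivalently, pigeonholing to the cubes on which $|\mathbb{T}_Y(Q)|\sim\mu_Y$), I pass from $\sum_{Q}|\mathbb{T}_Y(Q)|^{13/9}$ to $(|E_Y|/\delta^{4})\,\mu_Y^{13/9}$. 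Comparing with the upper bound of Theorem \ref{thm GZ} then gives
\[
\rho^{4/9}\,|E_Y|\,\mu_Y^{13/9}\;\lessapprox\;\delta^{-1/3}\,(\delta^{3}\#\mathbb{T})^{4/3}.
\]

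Finally I would substitute $|E_Y|\gtrsim\lambda\,\delta^{3}\,\#\mathbb{T}/\mu_Y$ into the last inequality and solve for $\mu_Y$: this reduces to $\mu_Y^{4/9}\lessapprox\delta^{2/3}\,\lambda^{-1}\,\rho^{-4/9}\,(\#\mathbb{T})^{1/3}$, and taking the $9/4$ power yields
\[
\mu_Y\;\lessapprox\;\lambda^{-9/4}\,\rho^{-1}\,\delta^{3/2}\,(\#\mathbb{T})^{3/4}
\;=\;\lambda^{-9/4}\,\rho^{-1}\,\delta^{-3/4}\,(\delta^{3}\#\mathbb{T})^{3/4},
\]
which is the claimed bound. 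I do not anticipate a real obstacle; the points that need attention are (i) using that every tube of $\mathbb{T}_Y(Q)$ passes through $Q$ so that the triple sum is genuinely bounded below by the hypothesis, (ii) tracking the scaling factors ($\delta^{3}$ for tube volumes and $\delta^{4}$ for converting sums over $\delta$-cubes into integrals), and (iii) absorbing the finitely many $\delta^{\varepsilon}$ losses hidden in the symbols $\lessapprox$ and $\gtrapprox$.
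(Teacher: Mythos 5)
Your proof is correct and matches the paper's argument in substance: the paper simply cites the volume estimate $|\bigcup_{T\in\mathbb{T}'}Y(T)|\gtrapprox\lambda_Y^{13/4}\delta^{3/4}\rho(\delta^3\#\mathbb{T}')^{1/4}$ from \cite[Corollary 3.6]{katz21} and combines it with the double-counting identity $|E_Y|=\mu_Y^{-1}\lambda_Y\delta^3\#\mathbb{T}$, whereas you re-derive that volume estimate in place by feeding the transversality hypothesis into Theorem \ref{thm GZ} pointwise and applying Jensen's inequality, which is precisely how the cited corollary is proved. The exponent bookkeeping is right, and the only point worth flagging is that your Jensen step (raising to the power $13/9>1$) is the cleaner of the two alternatives you list; a naive pigeonholing to cubes with $|\mathbb{T}_Y(Q)|\sim\mu_Y$ would need a moment's care since it is not immediate that a single dyadic level both carries most of the mass and has $\nu\gtrapprox\mu_Y$.
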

\begin{proof}
We sketch the proof. For a $1$-parallel shading $(\mathbb{T}',Y)$, we have
\begin{equation}\label{eq trilinearincidence}
\left|\bigcup_{T\in \mathbb{T}'}Y(T)\right|\gtrapprox \lambda_{Y}^{\frac{13}{4}}\delta^{\frac{3}{4}}\rho(\delta^{3}\#\mathbb{T}')^{\frac{1}{4}},
\end{equation}
by \cite[Corollary 3.6]{katz21}.
By double counting the same incidence, we have the relation
\begin{equation}\label{eq doublecounting}
\Big|\bigcup_{T\in\mathbb{T}}Y(T)\Big|=\mu_Y^{-1}\lambda_{Y}\delta^{3}\#\mathbb{T}.
\end{equation}
Combining \eqref{eq trilinearincidence} with \eqref{eq doublecounting}, we obtain
\begin{equation}
\mu_{Y}\lessapprox \lambda_{Y}^{-\frac{9}{4}}\rho^{-1}\delta^{-\frac{3}{4}}(\delta^{3}\#\mathbb{T})^{\frac{3}{4}}.
\end{equation}
\end{proof}
For the plany case, we will use the following planebrush estimate, which was established by Katz and Zahl \cite{katz21}. It is worth noting that this estimate is obtained through a geometric argument, which often provides a more favorable power of $\lambda$ under the two-ends assumption.

\begin{lemma} \cite{katz21}\label{lem planebrush}
Let $0<\rho<1$. Let $\Omega\subset S^{3}$ be a set of $\rho$-separated directions. Let $(\mathbb{T},Y)$ be a family of essentially distinct plany $\rho$-tubes with with $\lambda$-dense shading.
Suppose that
\begin{itemize}
    \item The tube family $\mathbb{T}$ is $A$-parallel.
    \item The shading $(\mathbb{T},Y)$ satisfies $(\varepsilon_{1},\varepsilon_{2})$-two-ends condition.
    \item There are numbers $D,D_{1},D_{2}$ with $D_{1}\geq D_{2}$ and $\frac{D_{1}}{D_{2}}\leq D$ such that the following holds. For each $Q \in \bigcup_{T \in \mathbb{T}} Y(T)$, there exist planes $\Pi_1(Q), \dots, \Pi_{D_1}(Q)$ and collections of tubes $\mathbb{T}^1(Q), \dots, \mathbb{T}^{D_1}(Q) \subset \mathbb{T}_{Y}(Q)$ such that for each index $1 \leq i \leq D_1$, the tubes in $\mathbb{T}^i(Q)$ satisfy $\angle(T, \Pi_i(Q)) \leq \rho$ and $|\mathbb{T}^i(Q)| = \frac{\mu_Y}{D}$. Moreover, each tube $T \in \mathbb{T}_{Y}(Q)$ is contained in exactly $D_2$ of the collections $\{\mathbb{T}^i(Q)\}$.
\end{itemize}
Then we have the following estimate,
$$\Big|\bigcup_{T\in\mathbb{T}}Y(T)\Big|\gtrapprox \rho^{\varepsilon_{1}}\lambda^{1/3}\rho^{2/3}D^{-4/3}A^{-1/3}\sum_{T\in\mathbb{T}}|Y(T)|.$$
\end{lemma}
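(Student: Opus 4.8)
The plan is to run the planebrush argument: instead of summing over a single brush of tubes through a point (as in the hairbrush argument), we sum over a $2$-plane and exploit the planiness to get a large lower bound on the shaded volume. First I would fix a typical $\delta$-cube $Q_0$ realizing the average multiplicity, so that $|\mathbb{T}_Y(Q_0)| \gtrapprox \mu_Y$, and pick one of the planes $\Pi := \Pi_i(Q_0)$ together with its associated collection $\mathbb{T}^i(Q_0)$ of $\sim \mu_Y/D$ tubes making angle $\le \rho$ with $\Pi$. These tubes all pass near $Q_0$ and lie (up to angle $\rho$) in the $3$-dimensional slab $\Pi \times [\text{thin}]$; since they point in $\rho$-separated directions and are $A$-parallel, there are many genuinely distinct directions among them. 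The planebrush $\mathcal{P} := \bigcup_{T \in \mathbb{T}^i(Q_0)} Y(T)$ is the object whose volume we want to bound from below, because each tube in $\mathbb{T}^i(Q_0)$ contributes $\lambda \rho^3$ to it and the overlaps are controlled by a lower-dimensional incidence estimate inside the slab.

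The key steps, in order: (1) \emph{Reduce to a single planebrush.} Use pigeonholing to find $Q_0$ and an index $i$ so that $\mathbb{T}^i(Q_0)$ has $\sim \mu_Y/D$ tubes, each with $|Y(T)| \gtrapprox \lambda |T| = \lambda \rho^3$; the total shaded mass we are trying to capture is $\sum_{T} |Y(T)| \sim \#\mathbb{T} \cdot \lambda \rho^3$, and by double counting $|E_Y| = \mu_Y^{-1} \lambda \rho^3 \#\mathbb{T}$, so it suffices to prove $|\mathcal{P}| \gtrapprox \rho^{\varepsilon_1} \lambda^{1/3}\rho^{2/3} D^{-4/3} A^{-1/3} \#\mathbb{T} \lambda \rho^3$, equivalently a lower bound on $|E_Y|$ after re-expanding. (2) \emph{Work inside the slab.} Project the tubes of $\mathbb{T}^i(Q_0)$ and their shadings onto a neighborhood of $\Pi$; here the problem becomes a two-ends Furstenberg/Kakeya-type incidence problem in essentially three dimensions (a $2$-plane thickened by $\rho$), where one applies a Córdoba-type $L^2$ argument or the planar two-ends bound to count incidences between the $\sim \mu_Y/D$ shaded tubes. (3) \emph{Use the two-ends hypothesis} to prevent the shadings from concentrating on a $\rho \times \rho^{\varepsilon_1}$ subtube, which is exactly what lets the $L^2$ overlap computation convert tube-count into volume with only the loss $\rho^{\varepsilon_1}$; the $A$-parallel hypothesis contributes the $A^{-1/3}$ (collapsing parallel tubes), and the combinatorial structure constant $D$ enters as $D^{-4/3}$ from the two places it is used (selecting $\mathbb{T}^i(Q_0)$ costs $D^{-1}$, and the multiplicity-$D_2$ overlap in the $\{\mathbb{T}^i(Q)\}$ costs another $D^{-1/3}$ after optimization). (4) \emph{Sum back up / rescale} from the single planebrush to all of $E_Y$, checking the exponents $\lambda^{1/3}\rho^{2/3}$ match the $L^2$ bookkeeping.

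The main obstacle I expect is step (2)–(3): carefully setting up the incidence count inside the $\rho$-thickened plane while honestly tracking the two-ends gain. One must ensure that restricting to tubes near $\Pi$ does not destroy the $\rho$-separation of directions needed for the planar estimate, and that the two-ends condition on $Y(T) \subset T$ descends to a usable non-concentration statement for the projected shadings; there is also a bookkeeping subtlety in how the planebrush picked at $Q_0$ interacts with the per-cube plane collections $\Pi_j(Q)$ at \emph{other} cubes $Q$, which is where the $D^{-4/3}$ versus a naive $D^{-1}$ comes from. Finally, one should verify that the $\varepsilon_1$-loss is additive and not multiplied across the iteration, so that the stated clean bound with a single factor $\rho^{\varepsilon_1}$ is achieved. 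I would expect all of this to be essentially the argument in \cite{katz21}, so the write-up can cite the relevant lemmas there for the planar incidence input and focus on verifying the hypotheses transfer.
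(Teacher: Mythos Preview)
The paper does not prove this lemma; it is quoted from Katz--Zahl \cite{katz21} and used as a black box. So the comparison is to the actual Katz--Zahl argument, and your sketch has a genuine gap at the very first step.

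You anchor the ``planebrush'' at a single cube $Q_0$ and set $\mathcal{P}=\bigcup_{T\in\mathbb{T}^i(Q_0)}Y(T)$. That object is not a planebrush; it is a planar \emph{bush}. Since $\Pi$ is a $2$-plane in $\R^4$ (not a hyperplane---so the slab $N_\rho(\Pi)$ is effectively \emph{two}-dimensional, not three-dimensional as you write), the directions of tubes through $Q_0$ near $\Pi$ all lie on a single great circle in $S^3$, and there are at most $\rho^{-1}$ that are $\rho$-separated. Any lower bound on $|\mathcal{P}|$ can therefore produce at best the trivial direction count $\mu_Y/D\lesssim\rho^{-1}$, which carries none of the $\lambda^{1/3}$ or $A^{-1/3}$ dependence in the stated conclusion. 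Your reduction in step~(1), ``it suffices to prove $|\mathcal{P}|\gtrapprox(\text{bound})\cdot\#\mathbb{T}\,\lambda\rho^3$'', would require $\mathcal P$ to see a positive fraction of the \emph{global} incidence mass, which a single bush cannot.

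What is missing is the stem-\emph{tube} iteration. In \cite{katz21} one fixes a tube $T_0$ and runs over \emph{all} cubes $Q\in Y(T_0)$: for each such $Q$ the hypothesis places $T_0$ in $D_2$ of the collections $\mathbb{T}^i(Q)$, and the corresponding planes $\Pi_i(Q)$ all contain the direction of $T_0$. Pigeonholing over the $2$-parameter family of $2$-planes through that fixed direction yields one popular $\Pi$ served by many cubes along $T_0$; the planebrush is the aggregate of the $\mu_Y/D$-tube groups at \emph{all} those cubes, which is what makes it large enough. The two-ends condition is then used on the stem $T_0$ (to spread the good cubes $Q$ along its length), and the C\'ordoba-type $L^2$ bound is applied inside the genuinely $2$-dimensional slab; together with the $A$-parallel count this is what produces $\lambda^{1/3}\rho^{2/3}D^{-4/3}A^{-1/3}$. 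Your steps (2)--(4) should be rewritten around this stem-tube aggregation; the $D^{-4/3}$ bookkeeping you anticipate does arise, but from the interaction of $D_1,D_2$ across the many cubes along $T_0$, not at a single $Q_0$.
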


\subsection{Oscillatory estimates} 
We employ the method of Wang and Wu for handling the oscillatory part, making estimates at the $L^{2}$ and $L^{2\frac{n+1}{n-1}}$ endpoints and then applying interpolation.

\begin{lemma}\cite{WangWu2024}
    Let $X=\cup_{Q\in\mathcal{Q}}Q$ be the union of a collection $\mathcal Q$ of $R^{\frac{1}{2}}$-cubes, and let $f=\sum_{T\in\mathbb{T}}f_{T}$ be a sum of wave packets. Suppose for each $T\in \mathbb{T}$, there is a shading $Y(T)\subset T$ by
    $R^{\frac{1}{2}}$-cubes in $\mathcal{Q}$ such that the number of $R^{\frac{1}{2}}$-cubes contained in $Y(T)$ is at most $\lambda R^{\frac{1}{2}}$. Then 
    \begin{equation}
        \int_{X}\Big|\sum_{T\in\mathbb{T}}\mathcal Ef_{T}\mathbf{1}_{Y(T)}\Big|^{2}\lesssim\lambda R\|f\|_{L^{2}}^{2}.
    \end{equation}
\end{lemma}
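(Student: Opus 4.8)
The plan is to run the classical $L^{2}$-orthogonality argument for wave packets; the shading hypothesis enters only through elementary counting. I will use two standard facts about a wave packet $f_{T}$ at scale $R$. First, $\widehat{f_{T}}$ is supported in a single $R^{-1/2}$-cap $\theta(T)\subset B_{1}^{n-1}$, so by Cauchy--Schwarz on the cap
\[
\|\mathcal E f_{T}\|_{L^{\infty}(\mathbb R^{n})}\le\|f_{T}\|_{L^{1}}\le|\theta(T)|^{1/2}\|f_{T}\|_{L^{2}}\lesssim R^{-\frac{n-1}{4}}\|f_{T}\|_{L^{2}},
\]
and $\widehat{\mathcal E f_{T}}$ is a measure carried by the corresponding $R^{-1/2}$-cap of $S$. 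Second, the wave packet decomposition is $L^{2}$-almost-orthogonal: $\sum_{T\in\mathbb T}\|f_{T}\|_{L^{2}}^{2}\lesssim\|f\|_{L^{2}}^{2}$.

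First I would localize to the cubes of $\mathcal Q$. Since these cubes are essentially disjoint and each shading $Y(T)$ is a union of cubes of $\mathcal Q$, for $x$ in a cube $Q\in\mathcal Q$ one has $\sum_{T\in\mathbb T}\mathcal E f_{T}(x)\mathbf 1_{Y(T)}(x)=\sum_{T\in\mathbb T_{Y}(Q)}\mathcal E f_{T}(x)$, where $\mathbb T_{Y}(Q)=\{T\in\mathbb T:Q\subset Y(T)\}$; hence
\[
\int_{X}\Big|\sum_{T\in\mathbb T}\mathcal E f_{T}\,\mathbf 1_{Y(T)}\Big|^{2}\le\sum_{Q\in\mathcal Q}\int_{Q}\Big|\sum_{T\in\mathbb T_{Y}(Q)}\mathcal E f_{T}\Big|^{2}.
\]

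Next I would invoke $L^{2}$-orthogonality locally on each $Q$. Fix a Schwartz $h_{Q}$ with $|h_{Q}|^{2}\approx\mathbf 1_{Q}$ up to rapid decay off $Q$ (so $\int|h_{Q}|^{2}\lesssim R^{n/2}$) and with $\widehat{h_{Q}}$ supported in a ball of radius $\ll R^{-1/2}$; this is a standard construction. Then $\widehat{h_{Q}\mathcal E f_{T}}=\widehat{h_{Q}}\ast\widehat{\mathcal E f_{T}}$ is supported in an $O(R^{-1/2})$-neighbourhood of the point $(\xi_{\theta(T)},\psi(\xi_{\theta(T)}))$ of $S$, where $\xi_{\theta(T)}$ is the centre of the cap $\theta(T)$. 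For a fixed direction the wave packets tile $\mathbb R^{n}$ with bounded overlap, so $O(1)$ of the $T\in\mathbb T_{Y}(Q)$ share a given direction; as the cap centres are $R^{-1/2}$-separated, these neighbourhoods have $O(1)$ overlap. Therefore, by domination by $h_{Q}$ and Plancherel,
\[
\int_{Q}\Big|\sum_{T\in\mathbb T_{Y}(Q)}\mathcal E f_{T}\Big|^{2}\lesssim\Big\|\sum_{T\in\mathbb T_{Y}(Q)}h_{Q}\,\mathcal E f_{T}\Big\|_{L^{2}}^{2}\lesssim\sum_{T\in\mathbb T_{Y}(Q)}\|h_{Q}\,\mathcal E f_{T}\|_{L^{2}}^{2}\lesssim R^{n/2}\sum_{T\in\mathbb T_{Y}(Q)}\|\mathcal E f_{T}\|_{L^{\infty}}^{2}.
\]

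Finally I would sum over $Q$ and swap the order of summation. Each shading $Y(T)$ contains at most $\lambda R^{1/2}$ cubes of $\mathcal Q$ by hypothesis, so
\[
\sum_{Q\in\mathcal Q}\sum_{T\in\mathbb T_{Y}(Q)}\|\mathcal E f_{T}\|_{L^{\infty}}^{2}=\sum_{T\in\mathbb T}\|\mathcal E f_{T}\|_{L^{\infty}}^{2}\,\#\{Q\in\mathcal Q:Q\subset Y(T)\}\le\lambda R^{1/2}\sum_{T\in\mathbb T}\|\mathcal E f_{T}\|_{L^{\infty}}^{2}.
\]
Combining this with the previous display, with $\|\mathcal E f_{T}\|_{L^{\infty}}^{2}\lesssim R^{-\frac{n-1}{2}}\|f_{T}\|_{L^{2}}^{2}$, and with $\sum_{T}\|f_{T}\|_{L^{2}}^{2}\lesssim\|f\|_{L^{2}}^{2}$ gives
\[
\int_{X}\Big|\sum_{T\in\mathbb T}\mathcal E f_{T}\,\mathbf 1_{Y(T)}\Big|^{2}\lesssim R^{n/2}\cdot\lambda R^{1/2}\cdot R^{-\frac{n-1}{2}}\|f\|_{L^{2}}^{2}=\lambda R\,\|f\|_{L^{2}}^{2},
\]
which is the assertion. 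The step I expect to require the most care is the local orthogonality: one must ensure that after multiplication by the spatial cutoff $h_{Q}$ the Fourier supports of the individual wave packets still overlap with bounded multiplicity, which is the borderline interaction between the $R^{-1/2}$-separation of the directions and the $R^{-1/2}$ frequency uncertainty at scale $R^{1/2}$; this is resolved in the usual way by taking $\widehat{h_{Q}}$ supported in a sufficiently small ball and permitting a dimensional overlap constant. The remaining steps are routine bookkeeping.
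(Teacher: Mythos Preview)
Your argument is correct and is the standard $L^{2}$-orthogonality proof one would expect for this statement. The paper does not supply its own proof here---the lemma is simply quoted from \cite{WangWu2024}---so there is nothing in the paper to compare against; your localize--Plancherel--swap argument is precisely the mechanism that underlies such estimates, and the arithmetic $R^{n/2}\cdot\lambda R^{1/2}\cdot R^{-(n-1)/2}=\lambda R$ checks out.
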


By the Young inequality, we have the following lemma.
\begin{lemma}\label{lem TomasStein}
    In $\mathbb{R}^{n}$, we have 
    $$\|E f_{T}\|_{L^{2\frac{n+1}{n-1}}(\omega_{B_{R}})}\lesssim R^{-\frac12}  \|\mathcal Ef_{T}\|_{L^2(\omega_{B_{R}})},$$
where $T$ is a wave packet and $\omega_{B_{R}}$ is a weight that is $\approx 1$ on $B_{R}$ and decays rapidly outside the ball $B_{R}$. 
\end{lemma}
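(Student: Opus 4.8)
The plan is to recognise this estimate as a weighted Bernstein inequality for the wave packet $\mathcal E f_{T}$ — whose Fourier transform on $\mathbb{R}^{n}$ is supported in a thin box — and to prove it via Young's convolution inequality, exactly as the preceding sentence of the paper indicates. Throughout write $q\coloneqq 2\frac{n+1}{n-1}$ and $s\coloneqq\frac{n+1}{n}$, which satisfy $1+\tfrac1q=\tfrac12+\tfrac1s$.

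First I would locate the Fourier support. Since $f_{T}$ is a single wave packet, it is concentrated (up to negligible rapidly-decaying tails) on an $R^{-1/2}$-cap $\theta\subset B_{1}^{n-1}$, over which $\psi$ agrees with its degree-one Taylor polynomial to within $O(R^{-1})$ by the boundedness of the second fundamental form. Hence $(x,t)\mapsto \mathcal E f_{T}(x,t)$ has Fourier transform supported in a rectangular slab $\tau_{\theta}\subset\mathbb{R}^{n}$ of dimensions $\approx R^{-1/2}$ in the $n-1$ tangential directions and $\approx R^{-1}$ in the normal direction, so $|\tau_{\theta}|\approx R^{-\frac{n+1}{2}}$. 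I would fix a smooth bump $\eta_{\theta}\geq 0$ equal to $1$ on $\tau_{\theta}$ and supported on its double, with the natural derivative bounds; then $\mathcal E f_{T}=\mathcal E f_{T}*\check\eta_{\theta}$, and $|\check\eta_{\theta}|$ is dominated pointwise by an $L^{1}$-normalised weight $v$ adapted to the dual box $\tau_{\theta}^{*}$, a translate of the tube $T$ with side lengths $\approx R^{1/2}$ and $\approx R$ and volume $\approx R^{\frac{n+1}{2}}$. A direct computation gives $\|v\|_{L^{s}}\approx|\tau_{\theta}|^{\,1-1/s}$, and since $1-\tfrac1s=\tfrac12-\tfrac1q=\tfrac1{n+1}$ this equals $R^{-1/2}$; this is where the gain will come from.

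Next I would run a weighted Young's inequality. One cannot feed the weight directly into Young, so the idea is to play the slow variation of $\omega_{B_{R}}$ at scale $R$ — that is, $\omega_{B_{R}}(x)\lesssim_{N}(1+|z|/R)^{N}\omega_{B_{R}}(x-z)$ for all $x,z$ — against the rapid decay of $v$ past $\tau_{\theta}^{*}$, all of whose side lengths are $\leq R$. Starting from $|\mathcal E f_{T}|\leq|\mathcal E f_{T}|*|\check\eta_{\theta}|\lesssim|\mathcal E f_{T}|*v$, multiplying by $\omega_{B_{R}}^{1/q}$ and using the weight comparison, one pulls $\omega_{B_{R}}^{1/q}$ inside the convolution at the cost of replacing $v$ by $\tilde v(z)\coloneqq v(z)(1+|z|/R)^{N/q}$, which still satisfies $\|\tilde v\|_{L^{s}}\lesssim R^{-1/2}$. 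Ordinary Young with the exponents $(2,s,q)$ then yields
$$\|\mathcal E f_{T}\|_{L^{q}(\omega_{B_{R}})}\lesssim\big\|(|\mathcal E f_{T}|\,\omega_{B_{R}}^{1/q})*\tilde v\big\|_{L^{q}}\leq\big\||\mathcal E f_{T}|\,\omega_{B_{R}}^{1/q}\big\|_{L^{2}}\|\tilde v\|_{L^{s}}\lesssim R^{-1/2}\|\mathcal E f_{T}\|_{L^{2}(\omega_{B_{R}}^{2/q})}.$$
Since $2/q<1$, the weight $\omega_{B_{R}}^{2/q}$ is again of the same type (still $\approx 1$ on $B_{R}$ and rapidly decaying; one just starts from an $\omega_{B_{R}}$ with a sufficiently large exponent), which is exactly the asserted inequality.

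I expect the only genuine obstacle to be this interaction between the weight and Young's inequality: the estimate really does fail without the weights, since $\mathcal E f_{T}\notin L^{2}(\mathbb{R}^{n})$ in general, so one must exploit the slow variation of $\omega_{B_{R}}$ against the rapid decay of $\check\eta_{\theta}$, keeping track of the anisotropy of $\tau_{\theta}^{*}$ (which is harmless, as both of its scales are $\leq R$). Everything else — the shape of $\tau_{\theta}$, the normalisation of $\check\eta_{\theta}$, and the exponent arithmetic $1-\tfrac1s=\tfrac12-\tfrac1q=\tfrac1{n+1}$ producing the factor $R^{-1/2}$ — is routine.
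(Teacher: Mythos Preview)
Your proposal is correct and matches the paper's approach: the paper does not give a proof at all, merely stating ``By the Young inequality, we have the following lemma,'' and your argument is precisely the intended Young's-inequality/Bernstein computation, with the details (the slab $\tau_{\theta}$ of volume $\approx R^{-(n+1)/2}$, the exponent arithmetic $1-\tfrac1s=\tfrac1{n+1}$, and the handling of the weight via its slow variation at scale $R$) filled in carefully and correctly.
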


Finally, we recall the refined decoupling inequality. It was first proved in \cite{GIOW} for the paraboloid in the translation-invariant setting and was independently observed by Du and Zhang \cite{DuZhang}. In the Wang–Wu method, this inequality serves as a key tool for handling the oscillatory part of restriction estimates, effectively converting the restriction problem into an incidence estimate.
\begin{theorem}[Refined decoupling]\label{thm refineddecoupling}
    Let $\mathcal E_{S}$ be the Fourier extension operator, where the hypersurface $S\subset \mathbb{R}^{n}$ has a strictly positive second fundamental form, and let $p_{n}=2\frac{n+1}{n-1}$. Suppose $f$ is a sum of wave packets $f=\sum_{T\in \mathbb{T}} f_{T}$ such that $\|\mathcal Ef_{T}\|_{L^{p_{n}}(\omega_{B_{R}})}^{2}$ are the same up to a constant multiple for all $T\in \mathbb{T}$. Let $X$ be a union of $R^{1/2}$-balls in $B_{R}$ such that each $R^{1/2} $-ball $Q\subset X$ intersects at most $M$ tubes $T$. Then

$$\|\mathcal E_{S}f\|_{L^{p_{n}}(X)}^{p_{n}}\lessapprox M^{\frac{2}{n-1}}\sum_{T\in\mathbb{T}}\|\mathcal E_{S}f_{T}\|_{L^{p_n}(\omega_{B_{R}})}^{p_n}.$$
 Here $\omega_{B_{R}}$ is a weight that is $\approx 1$ on $B_{R}$ and decay rapidly outside the ball $B_{R}$.   
\end{theorem}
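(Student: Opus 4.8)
The plan is to recover this as the refined decoupling theorem of \cite{GIOW, DuZhang}, deducing it from the $\ell^{2}$ decoupling theorem of Bourgain--Demeter by an induction on the scale $R$ together with parabolic rescaling. First I would make the harmless dyadic pigeonholing reductions: we may assume, as is already hypothesised, that the quantities $\|\mathcal{E}f_{T}\|_{L^{p_{n}}(\omega_{B_{R}})}$ are all equal, and the Schwartz tails of the wave packets are absorbed throughout into the rapidly decaying weights. Writing $\delta=R^{-1/2}$, the tubes $T$ are indexed by $\delta$-caps $\theta(T)\subset S$; the base case $R\approx 1$ is trivial since there are only $O(1)$ caps.

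For the inductive step I would pass to the intermediate scale $R^{1/2}$. Partition $S$ into caps $\tau$ of radius $R^{-1/4}$, write $f=\sum_{\tau}f_{\tau}$ with $f_{\tau}=\sum_{\theta(T)\subset\tau}f_{T}$, and cover $X$ by the $R^{1/2}$-balls $Q$ composing it. On each $Q$, the pieces $\mathcal{E}f_{\tau}$ are Fourier-supported in the $R^{-1/2}$-neighbourhood of $S$ and adapted to the $R^{-1/4}$-caps $\tau$, so $\ell^{2}$ decoupling at scale $R^{1/2}$ yields $\|\mathcal{E}f\|_{L^{p_{n}}(Q)}\lessapprox(\sum_{\tau}\|\mathcal{E}f_{\tau}\|_{L^{p_{n}}(\omega_{Q})}^{2})^{1/2}$. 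The crucial step is to convert this $\ell^{2}$-sum into an $\ell^{p_{n}}$-sum using the hypothesis on $X$: for a dyadic value $m$, the caps $\tau$ with local incidence count $M_{\tau}(Q):=\#\{T:\theta(T)\subset\tau,\ T\cap Q\neq\emptyset\}\approx m$ number at most $M/m$, since each such $T$ is one of the $\le M$ tubes meeting $Q$ and distinct $\tau$ carry disjoint families of $\theta$'s. Hölder's inequality in $\tau$ with exponent $p_{n}/2$ --- here the arithmetic identity $\tfrac{p_{n}}{2}-1=\tfrac{2}{n-1}$ enters --- together with summation over the $O(\log R)$ dyadic values of $m$ gives, after summing over $Q\subset X$,
$$\|\mathcal{E}f\|_{L^{p_{n}}(X)}^{p_{n}}\lessapprox\sum_{m}\Big(\tfrac{M}{m}\Big)^{\frac{2}{n-1}}\sum_{\tau}\|\mathcal{E}f_{\tau}\|_{L^{p_{n}}(X_{\tau,m})}^{p_{n}},$$
where $X_{\tau,m}\subset X$ is the union of those $Q$ with $M_{\tau}(Q)\approx m$, a set on which the subfamily $\{T:\theta(T)\subset\tau\}$ has multiplicity $\lesssim m$.

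Next I would rescale and invoke the inductive hypothesis. For each $\tau$, the affine parabolic rescaling sending the $R^{-1/4}$-cap $\tau$ to the unit cap turns the scale-$R$ problem for $\mathcal{E}f_{\tau}$ into a scale-$R^{1/2}$ problem for $\mathcal{E}_{S_{\tau}}g_{\tau}$, where $S_{\tau}$ has comparable curvature bounds, each $T\subset\tau$ maps to a wave packet at scale $R^{1/2}$, and each $R^{1/2}$-ball $Q$ maps into an $R^{1/4}$-ball; thus $X_{\tau,m}$ maps to a union of $R^{1/4}$-balls each meeting $\lesssim m$ of the rescaled tubes. The inductive hypothesis at scale $R^{1/2}$ then gives $\|\mathcal{E}f_{\tau}\|_{L^{p_{n}}(X_{\tau,m})}^{p_{n}}\lessapprox m^{\frac{2}{n-1}}\sum_{\theta(T)\subset\tau}\|\mathcal{E}f_{T}\|_{L^{p_{n}}(\omega_{B_{R}})}^{p_{n}}$, the $L^{p_{n}}$-norms on the two sides matching because the rescaling Jacobians cancel. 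Substituting, the two $M$-dependent factors combine as $(M/m)^{\frac{2}{n-1}}m^{\frac{2}{n-1}}=M^{\frac{2}{n-1}}$, and summing over $\tau$ and over the $O(\log R)$ values of $m$ closes the induction with the asserted constant.

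The main obstacle --- and the reason the argument is not purely routine --- is precisely this bookkeeping. A naive two-scale argument loses a factor $M^{2/(n-1)}$ from decoupling at scale $R$ and another from decoupling at scale $R^{1/2}$, producing the wrong bound $M^{4/(n-1)}$; organising the intermediate caps $\tau$ dyadically by their local incidence count $m$ is what makes the two losses \emph{multiply} to $M^{2/(n-1)}$ rather than compound. One must also verify that the incidence hypothesis defining $X$ is preserved under the parabolic rescaling so that the induction can be iterated, and that the accumulated $\ell^{2}$-decoupling constants and logarithmic factors over the $O(\log\log R)$ scales of the recursion are absorbed into the $R^{\varepsilon}$ hidden in $\lessapprox$; the treatment of weights and wave-packet tails throughout is standard.
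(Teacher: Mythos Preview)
The paper does not supply its own proof of this theorem: it is stated in Section~2.2 as a known result and attributed to \cite{GIOW} and \cite{DuZhang}, with no argument given. Your sketch is a faithful outline of the original GIOW proof---induction on $R$, Bourgain--Demeter $\ell^2$ decoupling at the intermediate scale $R^{1/2}$, dyadic pigeonholing of the caps $\tau$ by their local incidence count $m$ so that H\"older and the identity $\tfrac{p_n}{2}-1=\tfrac{2}{n-1}$ yield the factor $(M/m)^{2/(n-1)}$, parabolic rescaling, and the inductive hypothesis supplying the remaining $m^{2/(n-1)}$---and the bookkeeping you flag (preservation of the incidence hypothesis under rescaling, absorption of logarithmic losses into $R^\varepsilon$) is exactly what is needed. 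There is nothing to compare against in the paper itself; your approach matches the cited sources.
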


\begin{remark}\label{rem BR} 
By classical reductions (see, e.g., \cite{CS,GHI}), the Bochner--Riesz statement in Corollary \ref{cor BochnerRiesz} reduces to bounds for the Carleson--Sjölin operators 
\begin{equation}\label{eq CSoperator}
    S_{N}f(x)\coloneqq \int_{\mathbb{R}^{n}} e^{2\pi i N|x-y|}\, a(x,y)\, f(y)\, dy,
\end{equation}
where $a\in C^{\infty}(\mathbb{R}^{n}\times\mathbb{R}^{n})$ has compact support away from the diagonal. By rescaling, set $S^{N}f(x)\coloneqq S_{N}f(x/N)$.

It is well known that refined decoupling holds for positive-definite H\"ormander phases \cite{ILX}, of which the Euclidean distance $|x-y|$ is a special case. Therefore, the refined decoupling estimates in Theorem \ref{thm refineddecoupling} apply to \eqref{eq CSoperator}. Moreover, \cite{GWX} shows that the Wang--Wu decoupling--incidence method extends, via a standard $n\times n$ wave-packet decomposition, to positive-definite H\"ormander operators. For the Euclidean distance phase $|x-y|$, the wave packet family of $S^{N}$ consists of straight tubes and satisfies the same incidence estimate as Lemma \ref{lem incidence}.

Analogous to the proof of Theorem \ref{thm: restriction}, the decoupling--incidence argument in this paper then yields the operator bound
\begin{equation}\label{eq BR}
\|S^{N}f\|_{L^{p}(\mathbb{R}^{n})} \le C_{\varepsilon} N^{\varepsilon} \|f\|_{L^{p}(\mathbb{R}^{n})}
\end{equation}
for any $p\ge 2+\frac{200}{251}$ and any $\varepsilon>0$, with $C_{\varepsilon}$ independent of $N$. This implies Corollary \ref{cor BochnerRiesz}. This strategy was outlined in \cite{GWX} to improve the Bochner--Riesz exponent to $22/7$ in three dimensions.
\end{remark}

\section{The proof of Theorem \ref{thm: restriction}}

By Tao's $\varepsilon$-removal argument \cite{Tao99}, to obtain Theorem \ref{thm: restriction}, it suffices to prove the following localized restriction theorem.
\begin{theorem}\label{restriction}
    For any $\varepsilon>0$, there exists a constant $C_{\varepsilon}$ such that
    $$\|\mathcal Ef\|_{L^{p}(B_{R})}\leq C_{\varepsilon}R^{\varepsilon}\|f\|_{L^p(B_1^{3})},$$
    for any $p\geq2+\frac{200}{251}$.
\end{theorem}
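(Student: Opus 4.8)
The plan is to run the Wang--Wu decoupling--incidence machine in $\mathbb{R}^4$, feeding it our improved two-ends incidence estimate (Lemma \ref{lem incidence}, of the form $\operatorname{\textbf{TE}}(d,\frac{d+1}{2},b)$). After a standard broad-narrow reduction we may assume $\mathcal{E}f$ is broad, so that $|\mathcal{E}f|$ is comparable to a product of transverse pieces; by the Bourgain--Guth argument the broad part can be handled by a multilinear (in fact $k$-linear Kakeya type) estimate, and it is the non-broad part that carries the critical exponent. Next we perform a wave packet decomposition at scale $R$, so that $f=\sum_{T\in\mathbb{T}}f_T$ with each $f_T$ localized to a $\delta\times\cdots\times\delta\times 1$ slab in frequency and $\mathcal{E}f_T$ concentrated on a dual tube $T$ of dimensions $R^{1/2}\times\cdots\times R^{1/2}\times R$ (so effectively $\delta = R^{-1/2}$). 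By pigeonholing we may assume all $\|\mathcal{E}f_T\|_{L^{p_n}}$ are comparable, that the tubes point in $\delta$-separated directions, and --- crucially --- by a two-ends reduction (Tao's argument, as in \cite{Tao2009TwoEnds,katz21}, adapted to the $(\varepsilon_1,\varepsilon_1\varepsilon_2)$-two-ends condition) we may assume the shading $Y(T)=\{x\in T:|\mathcal{E}f_T(x)|\gtrsim\text{(dyadic value)}\}$ is $(\varepsilon_1,\varepsilon_1\varepsilon_2)$-two-ends and $\lambda$-dense.

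I would then dyadically decompose the superlevel set of $\mathcal{E}f$ according to the multiplicity $\mu$: let $X_\mu$ be the union of $R^{1/2}$-cubes $Q$ meeting $\approx\mu$ tubes. On $X_\mu$ we have two competing bounds for $\int_{X_\mu}|\mathcal{E}f|^p$. First, the refined decoupling inequality (Theorem \ref{thm refineddecoupling}) gives $\|\mathcal{E}f\|_{L^{p_n}(X_\mu)}^{p_n}\lessapprox \mu^{2/(n-1)}\sum_T\|\mathcal{E}f_T\|_{p_n}^{p_n}$ at the exponent $p_n=2\frac{n+1}{n-1}$ (here $=\tfrac{10}{3}$), while the trivial $L^2$-orthogonality bound (the first oscillatory lemma) controls $\int_{X_\mu}|\sum_T\mathcal{E}f_T\mathbf{1}_{Y(T)}|^2\lesssim \lambda R\|f\|_2^2$. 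Interpolating these two between $p=2$ and $p=p_n$ expresses $\int|\mathcal{E}f|^p$ on $X_\mu$ in terms of $\lambda$, $\mu$, $\delta$, $\#\mathbb{T}$, and $\|f\|_p$. Second, the incidence input: $X_\mu$ has measure $|X_\mu|\approx \mu^{-1}\lambda\,\delta^3\#\mathbb{T}\cdot R$ (double counting), while Lemma \ref{lem incidence} gives the lower bound $|\bigcup_T Y(T)|\gtrsim \lambda^{(d+1)/2}\delta^{4-d}(\delta^3\#\mathbb{T})^b$, which upgrades to an upper bound $\mu\lessapprox \lambda^{1-(d+1)/2}\delta^{d-4}\cdots$ on the multiplicity. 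Substituting this $\mu$ bound into the interpolated estimate and optimizing over the dyadic parameters $\lambda,\mu,\#\mathbb{T}$ yields $\|\mathcal{E}f\|_{L^p(B_R)}\lessapprox\|f\|_p$ exactly when $p>\frac{2(d+n)}{d+n-2}$; with $n=4$ and the value of $d$ coming from our incidence estimate this is $p>2+\frac{200}{251}$.

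I expect the main obstacle to be the incidence estimate itself (Lemma \ref{lem incidence}), i.e.\ proving $\operatorname{\textbf{TE}}(d,\frac{d+1}{2},b)$ with $d$ as large as $\frac{d+n}{d+n-2}$ permits --- this is where the planebrush argument of Katz--Zahl \cite{katz21} must be merged with the two-ends Furstenberg/hairbrush bound of Wang--Wu \cite{WangWu2024}. Concretely, one reduces via the Katz--Zahl structural dichotomy (their Propositions 2.9 and 2.12) to either the robustly transverse case, where one uses the hairbrush bound (Lemma \ref{lem hairbrush}) or the trilinear Kakeya corollary (Corollary \ref{cor GZ}), or the plany case, where one applies the planebrush estimate (Lemma \ref{lem planebrush}); balancing the $\lambda$-powers across these cases, together with an induction on scales to handle the passage between $\delta$ and intermediate scales, produces the numerology $d$. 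The remaining steps --- the broad-narrow/Bourgain--Guth reduction, the wave packet pigeonholing, the two-ends reduction, and the final interpolation-and-optimization --- are by now standard in this circle of ideas and should go through as in \cite{WangWu2024, GWX}; the genuinely new work, and the delicate bookkeeping of $\varepsilon_1,\varepsilon_2,\varepsilon_0$, lives in the incidence estimate.
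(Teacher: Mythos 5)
Your core skeleton matches the paper's --- wave packet decomposition, dyadic pigeonholing on $\|f_T\|_2$ and $\|\mathcal Ef_T\|_{L^{10/3}}$, a two-ends decomposition of the shading, refined decoupling with a multiplicity bound coming from Lemma \ref{lem incidence} via double counting, and interpolation between $L^2$ orthogonality and the $L^{10/3}$ refined-decoupling endpoint to land at $p>\frac{2(d+n)}{d+n-2}$ with $d=3.02$, $n=4$. Your numerology is right. But there are two substantive discrepancies from the paper's route worth flagging.

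First, the opening move you describe --- a Bourgain--Guth broad--narrow reduction, with the broad part handled by a multilinear Kakeya estimate and ``the non-broad part carrying the critical exponent'' --- is not what the paper, nor Wang--Wu, does, and it is also internally inconsistent. The decoupling--incidence method was designed precisely to replace Bourgain--Guth: the dichotomy that plays the role of broad/narrow is the \emph{two-ends/non-two-ends} split on the shading $Y(T)$ (the $h\leq R^{\varepsilon^4}$ vs.\ $h\in[R^{\varepsilon^4},R^{\varepsilon^2}]$ cases in the paper). Moreover, the multilinear input in this circle of ideas lives \emph{inside} the incidence estimate (Guth--Zahl's trilinear Kakeya, Theorem \ref{thm GZ}/Corollary \ref{cor GZ}), not at the top level; a top-level $k$-linear restriction argument à la Bourgain--Guth would not reach $p>2+\frac{200}{251}$ in $\mathbb R^4$.

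Second, and more importantly, you do not address how the non-two-ends scenario is closed. The paper handles it by induction on scales, and to make the induction close it proves the strictly stronger statement Theorem \ref{thm st},
\[
\|\mathcal Ef\|_{L^{p}(B_{R})}^{p}\leq C_{\varepsilon}R^{\varepsilon}\|f\|_{2}^{2}\sup_{\theta\in\Theta}\|f_{\theta}\|_{L^{2}_{\mathrm{avg}}(\theta)}^{p-2},
\]
following Guth's $L^2$-based broad-norm device from \cite{guth2016restriction}, rather than inducting on the $L^p\to L^p$ bound directly. This stronger quantity is what survives parabolic rescaling in the non-two-ends case. Simply invoking ``Tao's two-ends argument'' to \emph{assume} the shading is two-ends sweeps this under the rug: the reduction produces a residual non-two-ends piece, and you need an inductive mechanism (and the right inductive statement) to control it. Your proposal never introduces this stronger statement, so the induction would not close as written.

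A minor point: you say the incidence estimate requires ``an induction on scales to handle the passage between $\delta$ and intermediate scales.'' In fact, Lemma \ref{lem incidence} is a one-shot argument through the Katz--Zahl structural Lemma \ref{lem dich} (producing a single intermediate scale $\rho$), followed by interpolation of the hairbrush, planebrush, and Guth--Zahl bounds. The self-improving iteration appears only in the Kakeya maximal section (Lemmas \ref{thm:self-improving} and \ref{lem incidence2}), not in the restriction proof, because the restriction application needs the $\lambda$-exponent to stay pinned at $\frac{d+1}{2}$, which rules out further bootstrapping.
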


We consider the standard wave-packet decomposition. Let $\Theta$ be a covering of the unit ball $B_{1}^{3}$ by $R^{-1/2}$-balls with finite overlap. We define a smooth partition of unity $\{\eta_{\theta}\}_{\theta \in \Theta}$ subordinate to this covering, such that $\operatorname{supp}(\eta_{\theta}) \subset 2\theta$ and $\sum_{\theta \in \Theta} \eta_{\theta} = 1$ on $B_{1}^{3}$.

Similarly, in physical space, let $\Xi$ be a finitely overlapping covering of $\mathbb{R}^{3}$ by $R^{-1/2}$-balls, and let $\{\tau_{v}\}_{v \in \Xi}$ be a smooth partition of unity in $\mathbb{R}^{3}$ such that $\operatorname{supp}(\widehat{\tau}_{v}) \subset B^{3}(0, R^{-1/2})$ and $\sum_{v \in \Xi} \tau_{v} = 1$ in $\mathbb{R}^{3}$. 

Define  $f_{\theta,v}\coloneqq(f\eta_{\theta})*\hat\tau_{v}$. Then we perform the following wave packet decomposition
\begin{equation}\label{eq Wavepacket}
    f=\sum_{\theta\in\Theta}\sum_{v\in\Xi}f_{\theta,v}.
\end{equation}
For each $\theta \in \Theta$ and $v \in \Xi$, we define a tube 
$$
T_{\theta,v} = \Big\{ (x, t) \in B^{4}(0, R) : |x - c_{v} + t \nabla \psi(c_{\theta})| \leq R^{\frac{1}{2} + \varepsilon_{0}} \Big\}.
$$
Each tube $T_{\theta,v}$ has dimensions $R^{1/2+\varepsilon_{0}} \times R^{1/2+\varepsilon_{0}} \times R^{1/2+\varepsilon_{0}} \times R$. Here, $c_{\theta}$ and $c_{v}$ denote the centers of $\theta$ and $v$ respectively, and $\varepsilon_{0} > 0$ is a small constant. Let $V(\theta)$ be the vector $(1,\nabla \psi(c_{\theta}))$. Let ${\mathbb{T}}(\theta)=\{T_{\theta,v}:v\in\Xi \quad \text{and}\quad T_{\theta,v}\cap B_{R}\neq \emptyset \}$ be a family of $R$-tubes with direction $V(\theta)$. We denote $f_{\theta,v}$ by $f_{T}$ if $T=T_{\theta,v}$ and denote the tube family by $\mathbb{T}$. Therefore, we have
\begin{equation}
    \|\mathcal Ef\|_{L^{p}(B_{R})}\lesssim \Big\|\sum_{T\in\mathbb{T}}\mathcal Ef_{T}\Big\|_{L^{p}(B_{R})}.
\end{equation}

Let $f_{\theta}\coloneqq f\eta_{\theta}$ and define $$\|f_{\theta}\|_{L^{2}_{avg(\theta)}}^{2}\coloneqq \frac{1}{|\theta|}\|f_{\theta}\|_{2}^{2}.$$ In particular, we prove a stronger version of Theorem \ref{restriction}. We approximate the $L^p$-norm by $\|f\|_{2}^{2/p} \sup_{\theta \in \Theta} \|f_{\theta}\|_{L^{2}_{\text{avg}}(\theta)}^{1-2/p}$, following the approach introduced by Guth \cite{guth2016restriction}.

\begin{theorem}\label{thm st}
    For any $\varepsilon>0$, there exists a constant $C_{\varepsilon}$ such that
    $$\|\mathcal Ef\|_{L^{p}(B_{R})}^{p}\leq C_{\varepsilon}R^{\varepsilon}\|f\|_{2}^{2}\sup_{\theta\in \Theta}\|f_{\theta}\|_{L^{2}_{avg}(\theta)}^{p-2}$$
    for $p=2+\frac{200}{251}$.
\end{theorem}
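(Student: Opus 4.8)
The plan is to follow the Wang--Wu decoupling--incidence scheme, with the crucial input being the $\R^4$ incidence estimate obtained by feeding the Katz--Zahl planebrush argument with the two-ends Furstenberg inequality. First I would perform the usual broad-narrow (or rather, multiplicity) decomposition: after the wave packet decomposition $f = \sum_{T\in\mathbb{T}} f_T$, I partition $B_R$ into $R^{1/2}$-balls $Q$ and sort them by the number $\mu(Q)$ of wave packets passing through $Q$, and sort the tubes by their shading density $\lambda$ (the fraction of $Q$'s in $T$ that are ``heavy''), so that on a refined piece of $B_R$ both $\mu \approx \mu_0$ and $\lambda \approx \lambda_0$ are essentially constant, at the cost of $R^\varepsilon$ losses and finitely many dyadic pigeonholings. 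One also rescales from the $R$-ball to the unit ball so the tubes become $\delta = R^{-1/2}$-tubes pointing in $\delta$-separated directions, as in the incidence setup of Section~\ref{sec: prelim}. The point is that for the $L^p$ norm on the refined set, the trivial bound $\|\mathcal Ef\|_{L^p(X)}^p \lesssim$ (size of $X$) $\cdot$ (pointwise bound)$^p$ competes against the refined decoupling bound $\|\mathcal Ef\|_{L^p(X)}^p \lessapprox \mu_0^{2/(n-1)} \sum_T \|\mathcal Ef_T\|_{L^{p_n}}^{p_n}$-type estimates interpolated between $L^2$ and $L^{p_n}$ via Lemma~\ref{lem TomasStein} and the $L^2$ orthogonality lemma; the smaller of the two wins, and the crossover is controlled by how large $|X| = |E_Y|$ is forced to be, i.e. by an incidence/Kakeya estimate.

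Second, I would establish the two-ends reduction: the $R^\varepsilon$-loss can be exploited so that the shading $Y(T)$ may be assumed $(\varepsilon_1, \varepsilon_1\varepsilon_2)$-two-ends for suitable small $\varepsilon_1, \varepsilon_2$ (this is the standard argument of Wolff/Tao/Katz--Zahl, passing to a sub-tube and iterating, absorbing the loss into $R^{\varepsilon}$), and also that one is in the ``plany'' regime after the Guth--Zahl-type dichotomy (Corollary~\ref{cor GZ} handles the quantitatively transverse case directly, giving a strong bound). Then I invoke the key incidence estimate --- this is Lemma~\ref{lem incidence} in the body, of the form $\operatorname{{\bf TE}}(d, \frac{d+1}{2}, b)$ with $d$ chosen so that $\frac{2(d+n)}{d+n-2} = 2 + \frac{200}{251}$ in $n=4$ --- which is proved by combining the planebrush estimate Lemma~\ref{lem planebrush} (geometric, favorable power of $\lambda$) with the hairbrush/Furstenberg bound Lemma~\ref{lem hairbrush} and Corollary~\ref{cor GZ}, optimizing over the planebrush structural parameters $D, D_1, D_2, A$ and the $m$-parallel parameter. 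Solving $2 + \frac{200}{251} = \frac{2(d+4)}{d+2}$ gives $d = \frac{1004}{251} - 2 \cdot \frac{?}{?}$; the precise value of $d$ pops out of the numerology and drives the exponent $\frac{200}{251}$.

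Third, I would combine the two competing bounds. Write $|E_Y| \geq c\, \lambda^{(d+1)/2}\delta^{4-d}(\delta^3 \#\mathbb{T})^b$ from the incidence estimate, and use double counting $|E_Y| \mu_0 \approx \lambda \delta^3 \#\mathbb{T} \cdot |T|$-normalized, to extract an upper bound on $\mu_0$. Plug that into the refined-decoupling/interpolation bound for $\|\mathcal Ef\|_{L^p(X)}^p$; plug the lower bound for $|E_Y|$ (hence $|X|$) into the trivial bound; the two expressions, as functions of $\lambda_0$, cross exactly when $p = 2 + \frac{200}{251}$, and taking the minimum yields $\|\mathcal Ef\|_{L^p(B_R)}^p \lessapprox R^\varepsilon \|f\|_2^2 \sup_\theta \|f_\theta\|_{L^2_{\mathrm{avg}}(\theta)}^{p-2}$. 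The bookkeeping of the $\lambda$, $\mu$, $\#\mathbb{T}$, and $\|f_\theta\|_{L^2_{\mathrm{avg}}}$ factors --- making sure the $f$-dependence assembles into exactly $\|f\|_2^{2/p}\sup_\theta\|f_\theta\|^{1-2/p}$ after the dyadic pigeonholing --- is the routine but delicate part, following Guth's template.

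The main obstacle I expect is the incidence estimate itself (Lemma~\ref{lem incidence}): getting the planebrush argument of Katz--Zahl to accept the improved two-ends Furstenberg input of Wang--Wu in place of Wolff's hairbrush bound, while respecting the compatibility constraint that the exponent on $\lambda$ be exactly $\frac{d+1}{2}$ (needed for the decoupling--incidence method, as opposed to the weaker $\operatorname{{\bf TE}}(d,d,b)$ that suffices for Kakeya). This requires carefully tracking how the $\lambda$-powers from the planebrush geometry, the $\varepsilon_1$-two-ends losses, and the $m$-parallel/Guth--Zahl trilinear bounds interact under the structural decomposition into the $\mathbb{T}^i(Q)$ families, and then solving the resulting optimization; the constraint that pins $d$ to the value producing $p > 2 + \frac{200}{251}$ is where all the numerology converges, and verifying that no step loses a power of $\lambda$ or $\delta$ that would degrade the exponent is the delicate heart of the argument. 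A secondary technical point is ensuring the two-ends and plany reductions can be performed simultaneously without the pigeonholing losses compounding beyond $R^\varepsilon$.
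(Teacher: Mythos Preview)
Your proposal follows the Wang--Wu decoupling--incidence scheme the paper uses, and you correctly identify the crucial input as Lemma~\ref{lem incidence} (an estimate of the form $\operatorname{{\bf TE}}(d,\tfrac{d+1}{2},b)$ proved by combining planebrush, two-ends Furstenberg/hairbrush, and Guth--Zahl trilinear). However, three points in your sketch deviate from what the paper actually does and would need correction.

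First, there is no ``trivial bound $|X|\cdot(\text{pointwise})^p$'' in the argument; the final step is straight H\"older interpolation between the $L^{2}$ orthogonality bound $\int_{X_4}|\cdots|^{2}\lesssim \lambda R\,\|f\|_{2}^{2}$ and the $L^{10/3}$ refined-decoupling bound $\int_{X_4}|\cdots|^{10/3}\lessapprox (\lambda R)^{-101/150}\|f\|_{2}^{2}\sup_{\theta}\|f_{\theta}\|_{L^{2}_{\mathrm{avg}}(\theta)}^{4/3}$, and the opposing powers of $\lambda R$ cancel exactly at $p=2+\tfrac{200}{251}$. The lower bound on $|E_Y|$ is never plugged into a volume-times-sup estimate; it is used only (via Proposition~\ref{prop incidence}) to extract a pointwise multiplicity bound $\mu$ on a refinement $X_3$ which is then fed to refined decoupling. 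Second, the non-two-ends case cannot simply be ``absorbed into $R^{\varepsilon}$'' by the Wolff/Tao/Katz--Zahl reduction; for the restriction problem one must invoke induction on the scale $R$ (when $h\leq R^{\varepsilon^4}$ the relevant wave packets concentrate in $R^{1-\varepsilon^2}$-balls and one applies the inductive hypothesis $S(\bar R)\leq C_\varepsilon \bar R^{\varepsilon}$ there). Third, the plany/transverse dichotomy via Lemma~\ref{lem dich} and Corollary~\ref{cor GZ} lives entirely inside the proof of Lemma~\ref{lem incidence}; once that lemma is in hand, the restriction argument treats it as a black box and never sees the dichotomy directly.
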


We prove the theorem by induction on scales. For $R\geq 1$, define $S(R)$ to be the smallest constant such that
\[
\|\mathcal Ef\|_{L^{p}(B_{R})}^{p} \leq S(R) \|f\|_{2}^{2} \sup_{\theta \in \Theta} \|f_{\theta}\|_{L^{2}_{\text{avg}}(\theta)}^{p-2}.
\]
Our goal is to show that for every $\varepsilon > 0$ and every $R\geq 1$, there exists a constant $C_{\varepsilon}$ such that
\begin{equation}\label{SR}
    S(R) \leq C_{\varepsilon} R^{\varepsilon}.
\end{equation}
By choosing $C_{\varepsilon}$ sufficiently large, inequality \eqref{SR} holds for all $1 \leq R \leq 100$.

We now proceed by induction on $R$. Assume that for all $\bar{R}$ satisfying $1 \leq \bar{R} \leq \frac{R}{2}$, there exists a constant $C_{\varepsilon}$ such that
\begin{equation}\label{eq induction hypothesis}
 S( \bar{R}) \leq C_{\varepsilon} \bar{R}^{\varepsilon}.   
\end{equation}
We will show that the same bound holds at scale $R$.

\subsection{Incidence estimate}

The oscillation is treated via refined decoupling; the remaining task is to establish a better lower bound for the multiplicity. For this, Wang and Wu \cite{WangWu2024} established an incidence estimate by combining the two-ends Furstenberg inequality in the plane and hairbrush argument. Their work, along with \cite{GWX}, provides an incidence estimate in $\mathbb{R}^4$ at dimension $3$ that possesses a good power of $\lambda$, which implies the restriction estimate for the range of $p \ge 14/5$ matching a well-known threshold in the classical restriction estimate \cite{Guth18} and positive-definite Hörmander operators \cite{GHI}.

Guth and Zahl \cite{Guth-Zahl} and Katz and Zahl \cite{katz21} surpassed the barrier of dimension $3$ for the Kakeya maximal estimate by employing the plany/transverse argument. Their work inspires us to break the threshold $p \geq 14/5$ in the restriction problem. However, the incidence estimates in \cite{Guth-Zahl, katz21} lack a favorable power of $\lambda$, which prevents them from being directly applied to the restriction estimate.

Fortunately, in the plany scenario, the planebrush argument developed in \cite{katz21} yields an incidence estimate with a good power of $\lambda$. This allows us to trade a higher power of $\delta$ for a smaller power of $\lambda$.

We establish an improved two-ends incidence estimate in $\mathbb{R}^4$ with a better exponent in $\delta$ compared to the hairbrush estimate and a more favorable exponent in $\lambda$, thereby fitting into the restriction framework of \cite{WangWu2024}.

\begin{lemma}\label{lem incidence}
    In $\mathbb{R}^{4}$, for any $m$-parallel shading $(\mathbb{T},Y)$ satisfying the $(\varepsilon_{1},\varepsilon_{2})$-two-ends condition and $\lambda$-dense assumption, we have
    \begin{equation}\label{eq incidence}
        |\bigcup_{T\in\mathbb{T}}Y(T)|\gtrapprox \delta^{\varepsilon_{1}}m^{-\frac{9}{10}}\lambda^{\frac{201}{100}}\delta^{\frac{49}{50}}(\delta^{3}\#\mathbb{T})^{\frac{9}{10}}. 
    \end{equation}
\end{lemma}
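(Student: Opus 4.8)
The plan is to combine the planebrush estimate (Lemma~\ref{lem planebrush}) with the trilinear/transverse estimate (Corollary~\ref{cor GZ}) and the hairbrush estimate (Lemma~\ref{lem hairbrush}), and optimize the resulting bounds by interpolating between these three regimes. First I would invoke the standard plany/transverse dichotomy from \cite[Propositions 2.9 and 2.12]{katz21}: after passing to a refinement of $(\mathbb{T},Y)$ (which only costs a $\delta^{\varepsilon}$ factor and does not affect the stated estimate), one may assume the family falls into one of two structured cases. In the \emph{transverse} case, for a typical $\delta$-cube $Q$ the tubes through $Q$ contain a positive proportion of triples with $|V(T')\wedge V(T'')\wedge V(T''')|\gtrsim \rho$ for some dyadic $\rho$, and Corollary~\ref{cor GZ} (together with the double-counting identity \eqref{eq doublecounting}) gives $\mu_Y\lessapprox \lambda^{-9/4}\rho^{-1}\delta^{-3/4}(\delta^3\#\mathbb{T})^{3/4}$. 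In the \emph{plany} case, the hypotheses of Lemma~\ref{lem planebrush} are met with parameters $A=m$, with $D$ controlled by the angular distribution around the planes, and the planebrush estimate gives a lower bound for $|\bigcup Y(T)|$ with the favorable $\lambda^{1/3}$ power, or equivalently (via \eqref{eq doublecounting}) an upper bound on $\mu_Y$.

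The second step is the key optimization. When $\rho$ (the trilinear transversality scale) is small, the transverse estimate is weak, but small $\rho$ forces the directions through each cube to cluster near a $2$-plane — precisely the plany regime where Lemma~\ref{lem planebrush} is strong. One runs the planebrush at the scale $\rho$: treating the $\delta$-tubes at coarse angular scale $\rho$ reduces to a family of $\rho$-tubes that is $A$-parallel with $A\approx \rho^3/\delta^3 \cdot (\text{local multiplicity})$, apply Lemma~\ref{lem planebrush}, and then recurse or bush inside each $\rho$-tube. Balancing the $\rho$-dependence of the planebrush output against the $\rho^{-1}$ loss in Corollary~\ref{cor GZ} yields the worst-case exponent. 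Tracking the $\lambda$, $\delta$, $m$, and $\#\mathbb{T}$ powers through this balancing should produce exponents of the form: $m$-power $= -\tfrac{9}{10}$, $\lambda$-power $=\tfrac{201}{100}=2.01$, $\delta$-power $=\tfrac{49}{50}=0.98$, and $(\delta^3\#\mathbb{T})$-power $=\tfrac{9}{10}$, consistent with interpolating the exponent triple $(7/4,1,1/2)$ of the hairbrush bound against the planebrush/trilinear bounds. The $\delta^{\varepsilon_1}$ factor is inherited directly from the two-ends losses in Lemmas~\ref{lem hairbrush} and~\ref{lem planebrush}, and the final $\gtrapprox$ absorbs all the $\delta^{\varepsilon}$ refinement losses.

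The main obstacle, I expect, is verifying that the plany case genuinely produces the claimed structure required as input to Lemma~\ref{lem planebrush} — namely, exhibiting for each $\delta$-cube $Q$ the collections $\mathbb{T}^i(Q)$ with the exact multiplicity $\mu_Y/D$ and the uniform covering number $D_2$. This bookkeeping (the ``$D,D_1,D_2$'' structure) is where the argument in \cite{katz21} is most delicate: one must pigeonhole the planes $\Pi(Q)$ at scale $\rho$, discard cubes where the planar structure degenerates, and ensure the discarded portion is a non-refinement. A secondary technical point is handling the interface between the coarse scale $\rho$ and the fine scale $\delta$ correctly when applying the planebrush — in particular checking that the shading at scale $\rho$ inherits a two-ends condition of the right form (with the $\delta^{\varepsilon_1}$ loss), and that the $A$-parallel count at scale $\rho$ is compatible with the $m$-parallel hypothesis at scale $\delta$. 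Once the structural reductions are in place, the remaining work is the (somewhat lengthy but routine) exponent arithmetic to locate the optimal $\rho$ and confirm it yields exactly \eqref{eq incidence}.
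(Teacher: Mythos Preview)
Your overall strategy is correct and matches the paper's: combine the planebrush estimate at a coarse scale $\rho$, the hairbrush estimate, and the Guth--Zahl trilinear estimate, then interpolate to eliminate $\rho$. Two clarifications are worth making.

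First, the paper does \emph{not} proceed by a plany/transverse case split. Instead it invokes a single structural lemma (Lemma~\ref{lem dich}, which is \cite[Lemma 6.1]{katz21}) that outputs one refinement $(\mathbb{T}_1,Y_1)$ together with a scale $\rho$, an $A$-parallel family $(\mathbb{T}_\rho,Y_\rho)$ of $\rho$-tubes, the parameters $D,D_1,D_2$, and a uniform inner multiplicity $\tilde\mu$, so that $(\mathbb{T}_1,Y_1)$ is simultaneously $\rho$-plany (property~\ref{item dich i}) and trilinearly transverse at scale $\rho$ (property~\ref{item dich ii}). The ``$D,D_1,D_2$ bookkeeping'' you flag as the main obstacle is exactly what this lemma packages; you should cite it rather than attempt to rebuild it. In particular, the planebrush is applied to $(\mathbb{T}_\rho,Y_\rho)$ with the $A$ coming from Lemma~\ref{lem dich}, not to the $\delta$-tubes with $A=m$ as your first paragraph suggests.

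Second, your sketch omits two ingredients needed to close the numerics. The paper applies the hairbrush estimate (Lemma~\ref{lem hairbrush}) not only inside each $\rho$-tube to bound $\tilde\mu$, but also at scale $\rho$ to bound $\mu_{Y_\rho}$; this is then combined with the planebrush bound (weights $1/4$ and $3/4$) before feeding into $\mu_{Y_1}\lessapprox \mu_{Y_\rho}\tilde\mu/D$. Separately, the bound $\mu_{Y_1}\lesssim\rho^{-1}\tilde\mu$ from property~\ref{item dich ix} is used as an independent estimate and is what allows the parameter $A$ to be eliminated (via the relation $\delta^3\#\mathbb{T}_1/(\rho^3\#\mathbb{T}_\rho)\lesssim A^{-1}$). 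Only after these two interpolations is the trilinear estimate brought in to cancel the remaining $\rho$. The $m$-parallel case is handled at the very end by passing to a $1$-parallel subfamily, not by carrying $m$ through the argument.
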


We recall the following technical lemma that establishes the connection between the plany case and the quantitatively transverse case, and we will use it in the proof of Lemma \ref{lem incidence}.

\begin{lemma} \cite[Lemma 6.1]{katz21}\label{lem dich}
    Let $(\mathbb{T},Y)$ be a set of $\delta$-tubes and their associated shading. Suppose that $(\mathbb{T},Y)$ is $(\varepsilon_{1},\varepsilon_{2})$-two-ends and $\varepsilon_{3}$-robustly transverse.
    
Then there exists a number $\delta\leq \rho\leq 1$; a refinement $(\mathbb{T}_{1},Y_{1})$ of $(\mathbb{T},Y)$; a set $\Omega\subset S^{3}$ of $\theta$-separated points; a set $(\mathbb{T}_{\rho},Y_{\rho})$ of essentially distinct $\rho$-tubes and their associated shading; and numbers $D,D_{1},D_{2}\geq1$ with $D_{1}\geq D_{2}$ and $\frac{D_{1}}{D_{2}}\leq D$ 
so that the following holds
\begin{enumerate}[label=(\roman*)]

    \item \label{item dich i}
    For each cube $Q\in \bigcup_{T\in \mathbb{T}_{1}}Y_{1}(T)$, there is a plane $\Pi(Q)$ so that $\angle(V(T),\Pi(Q))\leq \rho$ for all $T\in \mathbb{T}_{Y_1}(Q)$, where $V(T)$ denotes the direction of the tube $T$.
    
    \item \label{item dich ii}
    For each $Q\in \bigcup_{T\in \mathbb{T}_{1}}Y_{1}(T)$, 
     $$|\{T',T'',T'''\in \mathbb{T}_{Y_{1}}(Q):|V(T')\wedge V(T'')\wedge V(T''')|\gtrsim \rho\} |\gtrapprox |\mathbb{T}_{Y_{1}}(Q)|^{3}.$$

     \item \label{item dich iii}
     Every tube in $\mathbb{T}_{\rho}$ points in a direction from $\Omega$. For each $\omega \in \Omega$, there are $\approx A$ tubes from $\mathbb{T}_{\rho}$ in direction $\omega$.

     \item \label{item dich iv}
    Each tube $T_{\rho}\in\mathbb{T}_{\rho}$ contains $\approx \frac{|\mathbb{T}_{1}|}{|\mathbb{T_{\rho}}|}$ tubes from $\mathbb{T}_{1}$, and each tube from $\mathbb{T}_{1}$ is contained 
     in exactly one tube from $\mathbb{T}_{\rho}$.

    \item \label{item dich v}
    If $T\in\mathbb{T}_1$, $T_{\rho}\in \mathbb{T}_{\rho}$, and $T\subset T_{\rho}$, then $Y_{1}(T)\subset Y_{\rho}(T_{\rho})$.

    \item \label{item dich vi}
    $$\lambda_{Y_{\rho}}\gtrapprox \lambda_{Y_{1}}\gtrapprox \lambda_{Y}.$$

    \item \label{item dich vii}
    $(\mathbb{T}_{\rho},Y_{\rho})$ is $(\varepsilon_{1},\varepsilon_{2})$-two-ends.

    \item \label{item dich viii}
    For each $\delta$-cube $Q\in \bigcup_{T\in\mathbb{T}_1}Y_{1}(T)$, we have $|\mathbb{T}_{Y_{1}}(Q)|\approx \mu_{Y_{1}}$. For each $\rho$-cube $Q_{\rho}\in \bigcup_{T_{\rho}\in\mathbb{T}_{\rho}}Y_{\rho}(T_{\rho})$, we have $|\mathbb{T}_{\rho}(Q_{\rho})|\approx \mu_{Y_{\rho}}$.

    \item \label{item dich ix}
    There is a number $\tilde \mu$ so that for each $T_{\rho}\in\mathbb{T}_{\rho}$, we have $\mu_{Y_{T_{\rho}}}\approx \tilde \mu $, where $\mu_{Y_{T_{\rho}}}$ is the multiplicity of 
    $\delta$-tubes in a thicker tube $T_{\rho}$. 
    Moreover, $\mu_{Y_1}\leq\rho^{-1}\tilde{\mu}$. 

    \item \label{item dich x}
    For each $Q_{\rho}\in \bigcup_{T_{\rho}\in\mathbb{T}_{\rho}}Y_{\rho}(T_{\rho})$, there are planes $\Pi_{1}(Q_{\rho}),\cdots,\Pi_{D_{1}}(Q_{\rho})$ and collections of $\rho$-tubes
    $\mathbb{T}_{\rho}^{1}(Q_{\rho}),\cdots,\mathbb{T}_{\rho}^{D_{1}}(Q_{\rho})\subset \mathbb{T}_{\rho}(Q_{\rho})$ so that the $\rho$-tubes in $\mathbb{T}_{\rho}^{i}(Q_{\rho})$ satisfy
    $\angle(T_{\rho},\Pi_{i}(Q_{\rho}))\leq \rho$ and $|\mathbb{T}_{\rho}^{i}|=\frac{\mu_{Y_{\rho}}}{D}$ for any index $1\leq i\leq D_{1}$. Each tube $T\in \mathbb{T}_{\rho}(Q_\rho)$ is contained in $D_{2}$ collections of the form ${\mathbb{T}_{\rho}^{i}(Q_{\rho})}$.

    \item \label{item dich xi}
    We have the multiplicity bound

        $$\mu_{Y_{1}}\lessapprox \frac{\mu_{Y_{\rho}} \cdot \tilde \mu}{D}.$$
        
\end{enumerate}

\end{lemma}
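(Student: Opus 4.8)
This is \cite[Lemma 6.1]{katz21}; for completeness I outline the proof I would reproduce. It is a chain of dyadic pigeonholing steps---each of which discards at most a $\delta^{o(1)}$ fraction of the tubes or of the shading, hence is harmless for the $\lessapprox,\gtrapprox$ appearing in the conclusions---together with the scale-selection procedure that locates the plany scale $\rho$ and one combinatorial-geometric input. First I would pigeonhole the value of $|\mathbb{T}_Y(Q)|$ over the cubes $Q\in\bigcup_T Y(T)$: on a refinement of the shading it becomes comparable to a single number, which is $\approx\mu_Y$ by \eqref{eq average multiplicity} and double counting, and a further pigeonholing of the tubes restores density $\gtrapprox\lambda$. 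This already yields the $\delta$-cube half of \ref{item dich viii} and starts the chain \ref{item dich vi}.

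Next I would run the Katz--Zahl scale-selection iteration. Starting at scale $1$ and descending through dyadic scales $\rho\ge\delta$, I would use the $\varepsilon_3$-robust transversality hypothesis and pigeonholing to test whether, at the current scale and after a further refinement, a positive proportion of the direction triples through a typical cube already satisfy the quantitative transversality \ref{item dich ii}; if not, the direction sets through typical cubes concentrate---after pigeonholing---into a bounded number of subcaps of half the size, and I would pass to the next scale. Since the tubes are $\delta$-separated, the iteration terminates at some $\rho\in[\delta,1]$ for which both the planiness \ref{item dich i} (the directions through each surviving cube lie within $\rho$ of a $2$-plane $\Pi(Q)$) and \ref{item dich ii} hold; the surviving pair is relabelled $(\mathbb{T}_1,Y_1)$. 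Then I would cover $S^3$ by finitely overlapping $\rho$-caps, group the tubes of $\mathbb{T}_1$ by the cap containing their direction into essentially distinct $\rho$-tubes $\mathbb{T}_\rho$, pigeonhole so that each coarse direction carries $\approx A$ coarse tubes (\ref{item dich iii}) and each coarse tube carries $\approx|\mathbb{T}_1|/|\mathbb{T}_\rho|$ thin tubes (\ref{item dich iv}), set $Y_\rho(T_\rho)\defeq\bigcup_{T\in\mathbb{T}_1,\,T\subset T_\rho}Y_1(T)$ so that \ref{item dich v} is automatic, and pigeonhole once more to restore $\lambda$-density (completing \ref{item dich vi}) and to make $|\mathbb{T}_\rho(Q_\rho)|\approx\mu_{Y_\rho}$ (the $\rho$-cube half of \ref{item dich viii}).

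For the plane structure \ref{item dich x} I would fix a coarse cube $Q_\rho$, view the $\approx\mu_{Y_\rho}$ directions of coarse tubes through it as a $\rho$-separated subset of $S^3$, and run a greedy ``rich $2$-plane'' selection---repeatedly peeling off a $\rho$-neighborhood of a $2$-plane containing the most remaining directions, then dyadically pigeonholing both the resulting collection sizes and the per-tube overlap multiplicities---to produce planes $\Pi_1(Q_\rho),\dots,\Pi_{D_1}(Q_\rho)$ and collections $\mathbb{T}_\rho^i(Q_\rho)$ of common size $\mu_{Y_\rho}/D$ with each coarse tube lying near exactly $D_2$ of them; a uniformizing pigeonhole over $Q_\rho$ fixes $D,D_1,D_2$, and double counting the direction--plane incidences gives $D_1\ge D_2$ and $D_1/D_2\le D$. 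Finally, I would pigeonhole the number of thin tubes through a $\delta$-cube lying inside a fixed coarse tube to a uniform value $\approx\tilde\mu$ (the first half of \ref{item dich ix}), and then derive the two multiplicity bounds $\mu_{Y_1}\le\rho^{-1}\tilde\mu$ (finishing \ref{item dich ix}) and $\mu_{Y_1}\lessapprox\mu_{Y_\rho}\tilde\mu/D$ (which is \ref{item dich xi}) by double counting thin-tube--cube incidences, using \ref{item dich iv}--\ref{item dich v}, the planiness \ref{item dich i} (which traps the thin-tube directions through a $\delta$-cube near the $2$-plane $\Pi(Q)$, hence traps those tubes inside few coarse tubes), and the rich-plane structure \ref{item dich x}.

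The hard part will be the bookkeeping: the many pigeonholing steps must be interleaved in an order for which each one preserves the properties gained earlier---in particular the scale-selection must be run so that constant multiplicity and $\lambda$-density survive the descent to scale $\rho$, and the coarse-tube normalizations \ref{item dich iii}--\ref{item dich iv} must not spoil the planiness. The other delicate point is \ref{item dich vii}: the two-ends condition at a $\rho\times\rho^{\varepsilon_1}$ subtube of a coarse tube does not transfer naively from the $\delta$-scale hypothesis, so instead I would argue that concentration of $Y_\rho(T_\rho)$ on such a subtube forces, via \ref{item dich iv}--\ref{item dich v}, concentration of a typical thin shading on a short subtube, contradicting the hypothesis---possibly at the cost of a controlled worsening of the exponent $\varepsilon_2$. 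Everything else is routine, and since each refinement costs only $\delta^{o(1)}$, all conclusions hold in the stated $\lessapprox,\gtrapprox$ form.
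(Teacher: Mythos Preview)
The paper does not prove this lemma: immediately after the statement it writes ``For the proof of Lemma~\ref{lem dich}, we refer the reader to \cite[Lemma 6.1]{katz21},'' and then only supplies an informal paragraph explaining what each item means. Your outline is consistent with that informal description and with the Katz--Zahl argument it is summarizing (pigeonhole to uniform multiplicity, scale selection to locate the plany/transverse scale $\rho$, pack into $\rho$-tubes with uniform statistics, greedy rich-plane extraction for \ref{item dich x}, and double counting for \ref{item dich ix}--\ref{item dich xi}), so there is nothing substantive to compare.

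One small caution: you flag \ref{item dich vii} as possibly costing a worsening of $\varepsilon_2$, but the lemma as stated asserts the \emph{same} $(\varepsilon_1,\varepsilon_2)$-two-ends at scale $\rho$. In the Katz--Zahl setup the two-ends transfers because a $\rho\times\rho^{\varepsilon_1}$ subtube of $T_\rho$ intersects each thin $T\subset T_\rho$ in a $\delta\times\rho^{\varepsilon_1}$ subtube, which is contained in a $\delta\times\delta^{\varepsilon_1}$ subtube since $\rho\ge\delta$; combined with \ref{item dich iv}--\ref{item dich v} this gives the claim with no loss in the exponent. So you should not expect to degrade $\varepsilon_2$ here.
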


For the proof of Lemma \ref{lem dich}, we refer the reader to \cite[Lemma 6.1]{katz21}. 
Intuitively, this lemma first finds a refinement $(\T_1, Y_1)$ of the shaded $\delta$-tubes $(\T, Y)$ and a larger scale $\rho\in[\delta, 1]$ such that 
for each shaded $\delta$-cube 
$Q\in\bigcup_{T\in\T_1}Y_1(T)$, 
the $\delta$-bush centered at $Q$
is ``$\rho$-plany'' (property \ref{item dich i}), i.e., 
it clusters inside a $\rho\times1\times1\times1$ plank centered at $Q$,
and inside this plank the bush is ``trilinearly transverse'' (property \ref{item dich ii}).  
The multiplicities of the bushes are dyadically pigeonholed to be uniform (property \ref{item dich viii}).  

Then the lemma indicates that for this larger scale $\rho\in[\delta, 1]$, the collection $\T_1$ is packed evenly and efficiently (property \ref{item dich iv}) into an $A$-parallel family of $\rho$-tubes $\T_\rho$ (property \ref{item dich iii}). The shading $Y(\T_\rho)$ is inherited (property \ref{item dich v}) from $Y(\T)$ and preserves the density (property \ref{item dich vi}) and the two-ends property (property \ref{item dich vii}). 
Across all the shaded $\rho$-tubes $Q_\rho\in Y(\T_\rho)$, the multiplicity of the $\rho$-bush is uniform (property \ref{item dich viii}). 
The multiplicity of the bushes of $\delta$-tubes inside each thicker $\rho$-tube is also dyadically pigeonholed to be uniform inside and across every $\rho$-tube (property \ref{item dich ix}). 
Moreover, since each $\delta$-bush is ``$\rho$-plany'', it is covered by at most $\rho^{-1}$ many $\rho$-tubes (property \ref{item dich ix}). 

For each $Q_\rho\in Y(\T_\rho)$, 
the $\rho$-bush
centered at $Q_\rho$ 
is packed into $D_1$ many roughly $\rho\times1\times1\times1$ planks, 
in which each $\rho$-tube is packed $D_2$ times by such planks
(property \ref{item dich x}). 
The multiplicity $\mu_{Y_1}$ is 
simply\footnote{
    Taking into account the graininess inside the shaded $\rho$-tubes could potentially provide sharper control on  $\mu_{Y_1}$ than the bound in property \ref{item dich xi}. 
}
controlled (property \ref{item dich xi}) by first counting at the larger scale $\rho$ and then counting at scale $\delta$ inside each packed $\rho$-tube, modulo the overcounted quantity $D\geq\frac{D_1}{D_2}$.

\begin{proof}[Proof of Lemma \ref{lem incidence}]
  We first prove the lemma in the $1$-parallel scenario. Let $(\tilde {\mathbb{T}},Y)$ be a 
1-parallel family of tubes with shading. It suffices to prove the lemma when $(\mathbb{\tilde T}, Y)$ is $\varepsilon_3$-robustly transverse, and there is a standard argument to remove the robust transverse assumption. (See \cite[Proposition 2.12]{katz21}.) 
Let 
the refined family of $\lambda_{Y_{1}}$-densely shaded tubes $(\T_1, Y_1)$,
the scale 
$\rho\in[\delta,1]$, 
the set $\Omega\in S^3$ of $\rho$-separated directions, 
the family $\T_\rho$ of essentially distinct $\rho$-tubes, 
the shading $Y_\rho$ on $\T_\rho$, 
the uniform multiplicity $\tilde\mu$ of $\delta$-tubes inside each $T_\rho\in\T_\rho$,
and the positive parameter $D$ be the outputs of 
{Lemma \ref{lem dich}}.

We apply the lemma \ref{lem hairbrush} to the $A$-parallel shading $(\mathbb{T}_{\rho}, Y_{\rho})$ to obtain the upper bound for the average multiplicity
\begin{equation}\label{eq rho}
    \mu_{Y_{\rho}}\lessapprox \rho^{-\varepsilon_{1}}A^{\frac12}\lambda_{Y_{\rho}}^{-\frac{3}{4}}\rho^{-1}(\rho^{3}\#\mathbb{T}_{\rho})^{\frac12}.
\end{equation}

For the plany shading $(\mathbb{T}_{\rho},Y_{\rho})$, we use the incidence estimate obtained by the planebrush argument (See Lemma \ref{lem planebrush})
\begin{equation}\label{eq planebrush}
    \mu_{Y_{\rho}} \lessapprox \rho^{-\varepsilon_{1}} 
    \lambda_{Y_{\rho}}^{-\frac{1}{3}} 
    \rho^{-\frac{2}{3}} 
    A^{\frac{1}{3}} 
    D^{\frac{4}{3}}.
\end{equation}

Combining \eqref{eq planebrush} and \eqref{eq rho} with weights $3/4$ and $1/4$, respectively, we have
\begin{equation} \label{eq a}
    \mu_{Y_{\rho}} \lessapprox \rho^{-\varepsilon_{1}}\lambda_{Y_{\rho}}^{-\frac{7}{16}} \rho^{-\frac{3}{4}} A^{\frac{3}{8}} (\rho^{3} \# \mathbb{T}_{\rho})^{\frac{1}{8}}D.
\end{equation}

Furthermore, using Lemma \ref{lem dich} (xi) and the above inequality, we have the following
\begin{equation}\label{eq relation}
     \mu_{Y_{1}}\lessapprox \rho^{-\varepsilon_{1}} \lambda_{Y_{\rho}}^{-\frac{7}{16}}\rho^{-\frac{3}{4}} A^{\frac{3}{8}}(\rho^{3}\#\mathbb{T}_{\rho})^{\frac{1}{8}}\tilde{\mu}.
\end{equation}

Now we compute the number $\tilde \mu$. For each $T_{\rho}\in \mathbb{T}_{\rho}$, we rescale the $\rho$-tube to a unit cube. Naturally, the $\delta$-tubes contained in $T_{\rho}$ become the $\frac{\delta}{\rho}$-tubes. Define the tube collection
$$\mathbb{T}_{\delta/\rho}\coloneqq\{T\in \mathbb{T}_{1}:T \subset T_{\rho} \},$$ 
for each $\rho$-tube $T_{\rho}$, and we note $\#\mathbb{T}_{\delta/\rho}\approx \frac{\#\mathbb{T}_{1}}{\#\mathbb{T}_{\rho}}$. 

After rescaling, $(\mathbb{T}_{\delta/\rho},Y_{1})$ induces a new shading 
$(\bar {\mathbb{T}}_{\delta/\rho},\bar Y_{1})$. Since $(\mathbb{T},Y)$ is $1$-parallel shading, the shading $(\bar {\mathbb{T}}_{\delta/\rho},\bar Y_{1})$ is $1$-parallel and $\lambda_{Y_{1}}$-dense. Additionally, by Lemma \ref{lem dich} vi, we have $\lambda_{Y_{\rho}}^{-1} \lessapprox \lambda_{Y_{1}}^{-1} \lessapprox \lambda^{-1}$. We therefore replace the dense parameters $\lambda_{Y_{1}}$ and $\lambda_{Y_{\rho}}$ with $\lambda$ in the following.

Apply Lemma \ref{lem hairbrush} for the shading $(\bar {\mathbb{T}}_{\delta/\rho},\bar Y_{1})$, then we obtain the upper bound of average multiplicity
\begin{equation}\label{eq tilde}
    \tilde{\mu} \lessapprox 
    \left(\frac{\delta}{\rho}\right)^{-\varepsilon_{1}}
    \lambda^{-\frac{3}{4}}
    \left(\frac{\delta}{\rho}\right)^{-1}
    \left(
\frac{\delta^3\#\mathbb{T}_{1}}{\rho^3\#\mathbb{T}_{\rho}}
    \right)^{\frac{1}{2}}.
\end{equation}

Recalling that the $\delta$-tube family $\T_{1}$ is $1$-parallel, we have the following relation
\begin{equation}\label{eq rho-1}
    \frac{\delta^{3}\#\T_{1}}{\rho^{3}\#\T_{\rho}}\lesssim \frac{1}{A}.
\end{equation}

Combining this with the inequalities \eqref{eq relation} and \eqref{eq tilde}, and simplifying, we obtain
\begin{equation}\label{eq 1}
    \mu_{Y_{1}} \lessapprox 
    \delta^{-\varepsilon_{1}}
    \lambda^{-\frac{7}{16}}
    \rho^{\frac{1}{4}}
    \lambda^{-\frac{3}{4}}
    \delta^{-1}
\bigl(\delta^{3}\#\mathbb{T}_{1}\bigr)^{\frac{1}{8}}.
\end{equation}

Since the $\delta$-tubes are contained in a $\rho$-neighborhood of a plane, we obtain
\begin{equation}\label{eq 2}
    \mu_{Y_{1}}\lesssim \rho^{-1}\tilde \mu \lessapprox \delta^{-\varepsilon_{1}} \lambda^{-\frac{3}{4}}\delta^{-1}\left(\frac{\delta^{3}\# \mathbb{T}_{1}}{\rho^{3}\#\mathbb{T}_{\rho}}\right)^{\frac{1}{2}}\lessapprox \delta^{-\varepsilon_{1}}\lambda^{-\frac{3}{4}}\delta^{-1}\left(\frac{1}{A}\right)^{\frac{1}{2}}.
\end{equation}
Taking the weighted combination \eqref{eq 1}$^{\frac{8}{23}}\cdot$\eqref{eq 2}$^{\frac{15}{23}}$, we derive
\begin{equation}\label{eq theta}
    \mu_{Y_{1}}\lessapprox \delta^{-\varepsilon_{1}}\lambda^{-\frac{7}{46}}\rho^{\frac{2}{23}} \lambda^{-\frac{3}{4}}\delta^{-1}(\delta^{3}\#\mathbb{T}_{1})^{\frac{1}{23}}.
\end{equation}
We now remove the parameter $\rho$ by combining our estimate with the Guth--Zahl $3$-linear estimate (Theorem \ref{thm GZ} and Corollary \ref{cor GZ})
\begin{equation}\label{eq GZ}
    \mu_{Y_{1}}\lessapprox \delta^{-\varepsilon_{1}}  \lambda^{-\frac{9}{4}}\rho^{-1}\delta^{-\frac{3}{4}}(\delta^{3}\#\mathbb{T}_{1})^{\frac{3}{4}}.
\end{equation}
Taking the weighted combination \eqref{eq theta}$^{\frac{23}{25}}\cdot$ \eqref{eq GZ}$^{\frac{2}{25}}$, we obtain
\begin{equation}
    \mu_{Y_{1}}\lessapprox \delta^{-\varepsilon_{1}}\lambda^{-\frac{101}{100}}\delta^{-\frac{49}{50}}(\delta^{3}\#\mathbb{T}_{1})^{\frac{1}{10}}.
\end{equation}

Since the shading $(\mathbb{T}_{1},Y_{1})$ is a refinement of $(\mathbb{T},Y)$, then we obtain the incidence estimate
$$|\bigcup_{T\in\tilde {\mathbb{T}}}Y(T)|\geq |\bigcup_{T\in\mathbb{T}_{1}}Y_{1}(T)|=\frac{1}{\mu_{Y_{1}}}\sum_{T\in\mathbb{T}_{1}}|Y_{1}(T)|\gtrapprox \delta^{\varepsilon_{1}}\lambda^{\frac{201}{100}}\delta^{\frac{49}{50}}(\delta^{3}\#\mathbb{T})^{\frac{9}{10}},$$
with the last inequality follows from Lemma \ref{lem dich} vi.

For the $m$-parallel shading $(\mathbb{T},Y)$, we have the following relation
$$\#\bar{\mathbb{T}}\gtrapprox\frac{\#\mathbb{T}}{m}.$$
Therefore, we obtain
$$|\bigcup_{T\in\mathbb{T}}Y(T)|\geq |\bigcup_{T\in\tilde {\mathbb{T}}}Y(T)|\gtrapprox \delta^{\varepsilon_{1}}\lambda^{\frac{201}{100}}\delta^{\frac{49}{50}}(\delta^{3}\#\bar{\mathbb{T}})^{\frac{9}{10}}\gtrapprox\delta^{\varepsilon_{1}}m^{-\frac{9}{10}}\lambda^{\frac{201}{100}}\delta^{\frac{49}{50}}(\delta^{3}\#\mathbb{T})^{\frac{9}{10}}.$$
We complete the proof of the lemma.
    
\end{proof}

In the proof of the lemma above, we estimate an upper bound for the average multiplicity. However, this average is not directly applicable to the restriction estimate. Instead, we will use the fact that the multiplicity is pointwise bounded, except on a set with small measure. We obtain the following property through the same proof as in \cite[Proposition 3.2]{WangWu2024}.

\begin{prop}\label{prop incidence}
      Let $\delta\in(0,1)$. Let $(\mathbb{T},Y)$ be an $m$-parallel, $\delta$-separated tube family with an $(\varepsilon_{1},\varepsilon_{2})$-two-ends, $\lambda$-dense shading. Let $E_{Y}=\bigcup_{T\in\mathbb{T}}Y(T)$. In particular, by taking $$\mu=\delta^{-3\varepsilon_{1}}m^{\frac{9}{10}}\lambda^{-\frac{101}{100}}\delta^{-\frac{49}{50}}(\delta^{3}\#\mathbb{T})^{\frac{1}{10}},$$ there exists a set $E_{\mu}\subset E_{Y}$ such that
    $\#\mathbb{T}_{Y}(x)\lessapprox\mu$ for all $x\in E_{\mu}$, and 
    \begin{equation}
        |E_{Y}\setminus E_{\mu}|\leq \delta^{\varepsilon_{1}}|E_Y|.
    \end{equation}
\end{prop}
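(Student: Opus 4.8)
The plan is to deduce this statement from the average multiplicity bound implicit in Lemma \ref{lem incidence} by a Chebyshev-type double-counting argument, exactly as in \cite[Proposition 3.2]{WangWu2024}. First I would perform a harmless dyadic pigeonholing so that $|Y(T)|$ is comparable for all $T\in\mathbb{T}$; then the $\lambda$-density gives $\sum_{T\in\mathbb{T}}|Y(T)|\approx\lambda\,\delta^{3}\#\mathbb{T}$, and the double-counting identity \eqref{eq doublecounting} becomes $\mu_{Y}|E_{Y}|\approx\lambda\,\delta^{3}\#\mathbb{T}$. Applying Lemma \ref{lem incidence} to $(\mathbb{T},Y)$ itself to bound $|E_{Y}|$ from below then yields
\[
\mu_{Y}\;\lessapprox\;\delta^{-\varepsilon_{1}}\,m^{\frac{9}{10}}\lambda^{-\frac{101}{100}}\delta^{-\frac{49}{50}}\bigl(\delta^{3}\#\mathbb{T}\bigr)^{\frac{1}{10}}\;=\;\delta^{2\varepsilon_{1}}\mu,
\]
the point being that $\mu$ carries the factor $\delta^{-3\varepsilon_{1}}$, i.e.\ it exceeds the true average multiplicity by a factor $\gtrapprox\delta^{-2\varepsilon_{1}}$.

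Next I would set $E_{\mu}\coloneqq\{x\in E_{Y}:\#\mathbb{T}_{Y}(x)\le C_{0}\mu\}$ for a constant $C_{0}=C_{0}(\varepsilon_{1},\varepsilon_{2})$ to be fixed, so that on $E_{\mu}$ one has $\#\mathbb{T}_{Y}(x)\le C_{0}\mu\lessapprox\mu$ automatically. For the complement, double counting incidences and then discarding those outside $E_{Y}\setminus E_{\mu}$ gives
\[
C_{0}\mu\,\bigl|E_{Y}\setminus E_{\mu}\bigr|\;\le\;\sum_{x\in E_{Y}}\#\mathbb{T}_{Y}(x)\;=\;\sum_{T\in\mathbb{T}}|Y(T)|\;=\;\mu_{Y}|E_{Y}|,
\]
so $\bigl|E_{Y}\setminus E_{\mu}\bigr|\le C_{0}^{-1}(\mu_{Y}/\mu)|E_{Y}|\lessapprox C_{0}^{-1}\delta^{2\varepsilon_{1}}|E_{Y}|$. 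Choosing $\varepsilon=\varepsilon_{1}$ in the $\lessapprox$ above turns this into $C_{0}^{-1}C_{\varepsilon_{1}}\delta^{\varepsilon_{1}}|E_{Y}|$, and taking $C_{0}\ge C_{\varepsilon_{1}}$ produces the claimed inequality $\bigl|E_{Y}\setminus E_{\mu}\bigr|\le\delta^{\varepsilon_{1}}|E_{Y}|$; since $C_{0}$ depends only on $(\varepsilon_{1},\varepsilon_{2})$, the bound $\#\mathbb{T}_{Y}(x)\lessapprox\mu$ on $E_{\mu}$ is unaffected.

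I expect the only genuine subtlety — and the reason the statement is phrased with a sub-measure rather than with the whole shading — to be the following: one cannot simply apply Lemma \ref{lem incidence} to the restricted pair $(\mathbb{T},Y\cap(E_{Y}\setminus E_{\mu}))$, because passing to a sub-shading can destroy the $(\varepsilon_{1},\varepsilon_{2})$-two-ends hypothesis, so the incidence estimate is no longer available for that family. The double-counting route above circumvents this, as it invokes only the average multiplicity $\mu_{Y}$ of the original two-ends family $(\mathbb{T},Y)$. Apart from recording this observation and the dyadic pigeonholing in the density, the argument is line for line the same as \cite[Proposition 3.2]{WangWu2024}, so no new ideas are required.
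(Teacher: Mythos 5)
Your proposal is correct and takes essentially the same approach as the paper, which simply cites \cite[Proposition 3.2]{WangWu2024}: apply Lemma~\ref{lem incidence} (or equivalently the average-multiplicity bound it encodes) to get $\mu_Y\lessapprox\delta^{2\varepsilon_1}\mu$, then run a Chebyshev/Markov argument on the double-counting identity to bound the measure of the high-multiplicity bad set, with the gap between $\delta^{-\varepsilon_1}$ and $\delta^{-3\varepsilon_1}$ absorbing the $\lessapprox$ loss. One small caution: the dyadic pigeonholing you invoke to make $|Y(T)|$ comparable across $T$ is used only to pass from the $\lambda$-density to an \emph{upper} bound $\sum_T|Y(T)|\lessapprox\lambda\delta^3\#\mathbb{T}$ and to control $|E_Y|$ from below; when you then set up the Markov inequality, the count $\#\mathbb{T}_Y(x)$ and the definition of $E_\mu$ must still refer to the original family $\mathbb{T}$ (not the pigeonholed one), since restricting to a subfamily can only decrease the pointwise multiplicity and would give the wrong conclusion. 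As long as the Markov step is applied to the original family while the incidence lower bound on $|E_Y|$ is allowed to come from the refined subfamily, the argument closes exactly as you describe.
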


With the improved incidence estimate established above in Lemma \ref{lem incidence}, the claimed restriction estimate follows directly from the Wang--Wu decoupling--incidence method. For completeness, however, we also provide a self-contained proof in the remainder of this section, using the exponents stated in Theorem \ref{thm: restriction}.

\subsection{Dyadic pigeonholing}
For two dyadic numbers $\beta_{1}$ and $\beta_{2}$, let 
$$\mathbb{T}_{\beta_{1},\beta_{2}}=\{T\in \mathbb{T}:\|f_{T}\|_2\approx \beta_1\|f\|_{2} \quad\text{and}\quad \|\mathcal Ef_{T}\|_{L^{\frac{10}{3}}(\omega_{B_{R}})} \approx \beta_{2}\|f\|_{2}\}.$$
By a standard pigeonholing argument in \cite[Section 5]{WangWu2024}, there exists a pair $(\beta_{1},\beta_{2})$ for which 
$$\Big\|\sum_{T\in\mathbb{T}}\mathcal Ef_{T}\Big\|_{L^{p}(B_{R})}\lessapprox \Big\|\sum_{T\in \mathbb{T}_{\beta_{1},\beta_{2}}}\mathcal Ef_{T}\Big\|_{L^{p}(B_{R})}.$$
For each $\theta\in \Theta$, let $\mathbb{T}_{\beta_{1},\beta_{2}}(\theta)=\mathbb{T}_{\beta_{1},\beta_{2}} \cap \mathbb{T}(\theta)$. There exist a
dyadic $m\geq 1$ and a subset $\Theta_{m}\subset \Theta$ such that $\#\mathbb{T}_{\beta_{1},\beta_{2}}(\theta)\approx m$ for all $\theta \in \Theta_{m}$. Let $\tilde {\mathbb{T}}=\cup_{\theta \in \Theta_{m}}\mathbb{T}_{\beta_{1},\beta_{2}}(\theta)$ and we have
$$\|\mathcal Ef\|_{L^{p}(B_{R})}\lessapprox\Big\|\sum_{T\in \tilde{\mathbb{T}}}\mathcal Ef_{T}\Big\|_{L^{p}(B_{R})}.$$

For simplicity, we relabel $\mathbb{T}_{\beta_1,\beta_2}(\theta)$ by $\mathbb{T}(\theta)$ and $\Theta_{m}$ by $\Theta$. Furthermore, we write $\bar{\mathbb{T}}$ as $\mathbb{T}$ for convenience. The tube family $\mathbb{T}$ satisfies the following properties:
\begin{itemize}
    \item $\#\mathbb{T}(\theta)\approx m$ for all $\theta \in \Theta$.
    \item $\|\mathcal Ef_{T}\|_{L^{\frac{10}{3}}(\omega_{B_{R}})}$ are comparable for all $T\in \mathbb{T}$.
    \item $\|f_{T}\|_{2}$ are comparable for all $T\in \mathbb{T}$.
\end{itemize} 

By dyadic pigeonholing, there is a family of disjoint $R^{1/2}$-balls $\{Q\}$ such that the values of $\|\sum_{T\in\mathbb{T}}\mathcal Ef_{T}\|_{L^{p}(Q)}$ are the same up to a constant multiple. Let $X=\bigcup Q$. Then we have  
\begin{equation}\label{eq reduction1}
\Big\|\sum_{T\in\mathbb{T}}\mathcal Ef_{T}\Big\|_{L^{p}(B_{R})}\lessapprox \quad \Big\|\sum_{T\in\mathbb{T}}\mathcal Ef_{T}\Big\|_{L^{p}(X)}.
\end{equation}

\subsection{Two-ends reduction}
For each $T\in \mathbb{T}$, we partition $T$ into sub-tubes $\mathcal{J}=\{J\}$ of length $R^{1-\varepsilon^{2}}$. Then we decompose the set $\mathcal{J}(T)$ as a disjoint union $\mathcal{J}(T)=\bigcup_{\lambda} \mathcal{J}_{\lambda}(T)$, where $\lambda\leq 1$ is a dyadic number and $|J\cap X|\approx \lambda R$ for any $J\in \mathcal{J}_{\lambda}(T)$.
Therefore, we have

\begin{equation}
    \sum_{T\in\mathbb{T}}\mathcal Ef_{T}=\sum_{\lambda}\sum_{T\in\mathbb{T}}\sum_{J\in \mathcal{J}_{\lambda}(T)} \mathcal Ef_{T}\mathbf{1}_{J}.
\end{equation}
For each $Q\subset X$, by pigeonholing, there is a dyadic number $\lambda(Q)$ such that
\begin{equation}
\Big\|\sum_{T\in\mathbb{T}}\mathcal Ef_{T}\Big\|_{L^{p}(Q)}^{p}\lessapprox \int_{Q}\Big|\sum_{T\in\mathbb{T}}\sum_{J\in\mathcal{J}_{\lambda(Q)}(T)}\mathcal Ef_{T}\mathbf{1}_{J}\Big|^{p}.
\end{equation}
Note that $\|\mathcal Ef_{T}\|_{L^{p}(Q)}$ are the same up to a constant multiple for all $Q \subset X$. By applying dyadic pigeonholing over the collection
\[
\Big\{ \lambda(Q),\ \Big\|\sum_{T\in\mathbb{T}}\sum_{J\in \mathcal{J}_{\lambda(Q)}(T)} \mathcal Ef_{T}\mathbf{1}_{J} \Big\|_{L^{p}(Q)} : Q\subset X \Big\},
\]
there exists a uniform dyadic number $ \lambda $ and a refinement $ X_1 \subset X $ such that $ \lambda(Q) = \lambda $ for all $ Q \subset X_1 $, and
\[
\Big\|\sum_{T\in\mathbb{T}}\sum_{J\in \mathcal{J}_{\lambda}(T)} \mathcal Ef_{T}\mathbf{1}_{J} \Big\|_{L^{p}(Q)}
\]
is comparable up to a constant multiple for all $ Q \subset X_1 $. Thus, 
$$\Big\|\sum_{T\in\mathbb{T}}\mathcal Ef_{T}\Big\|_{L^{p}(X)}^{p}\lessapprox\Big\|\sum_{T\in\mathbb{T}}\sum_{J\in\mathcal{J}_{\lambda}(T)}\mathcal Ef_{T}\mathbf{1}_{J}\Big\|_{L^{p}(X_{1})}^{p}.$$

Furthermore, we partition $\mathbb{T}=\bigcup_{h}\mathbb{T}_{h}$, where $h \in[1,R^{\varepsilon^{2}}]$ is a dyadic number such that for all $T\in\mathbb{T}_{h}$, we have
$\#\mathcal{J}_{\lambda}(T)\approx h$. Hence, we have
$$\sum_{T\in\mathbb{T}}\sum_{J\in\mathcal{J}_{\lambda}(T)}\mathcal Ef_{T}\mathbf{1}_{J}=\sum_{h}\sum_{T\in\mathbb{T}_{h}}\sum_{J\in\mathcal{J}_{\lambda}(T)}\mathcal Ef_{T}\mathbf{1}_{J}.$$
For any $R^\frac12$-cube $Q\subset X_{1}$, by pigeonholing, there is a dyadic number $h(Q)$ such that
$$\int_{Q}\Big|\sum_{T\in\mathbb{T}}\sum_{J\in\mathcal{J}_{\lambda}(T)}\mathcal Ef_{T}\mathbf{1}_{J}\Big|^{p}\lessapprox \int_{Q}\Big|\sum_{T\in\mathbb{T}_{h(Q)}}\sum_{J\in\mathcal{J}_{\lambda}(T)}\mathcal Ef_{T}\mathbf{1}_{J}\Big|^{p}.$$
Since $\|\sum_{T\in\mathbb{T}}\sum_{J\in\mathcal{J}_{\lambda}(T)}\mathcal Ef_{T}\mathbf{1}_{J}\|_{L^{p}(Q)}$ is comparable up to a constant multiple for all $Q\subset X_{1}$, we use dyadic pigeonholing again to obtain a uniform dyadic number $h$ and a refinement $X_{2}\subset X_{1}$ such that the following properties
\begin{itemize}
    \item $h(Q)=h$,
    \item $\|\sum_{T\in\mathbb{T}_{h}}\sum_{J\in\mathcal{J}_{\lambda}(T)}\mathcal Ef_{T}\mathbf{1}_{J}\|_{L^{p}(Q)}$ is comparable up to a constant multiple,
\end{itemize}
hold for any $Q\subset X_{2}$.
Consequently, we conclude
\begin{equation}\label{eq reduction2}
\Big\|\sum_{T\in\mathbb{T}}\mathcal Ef_{T}\Big\|_{L^{p}(X)}^{p}\lessapprox \int_{X_{2}}\Big|\sum_{T\in\mathbb{T}_{h}}\sum_{J\in\mathcal{J}_{\lambda}(T)}\mathcal Ef_{T}\mathbf{1}_{J}\Big|^{p}.
\end{equation}
Moreover, we have $|X_{2}|\gtrapprox|X|$ since $X_{2}$ is a refinement of $X$.

\subsection{The non-two-ends scenario}

Suppose $h \leq R^{\varepsilon^{4}}$. This corresponds to the non-two-ends scenario, and the result can be established directly via induction on scales as in \cite{WangWu2024}.

\subsection{Two-ends scenario}
Suppose $h \in [R^{\varepsilon^{4}},R^{\varepsilon^{2}}]$. For each $T\in \mathbb{T}_{h}$, considering the shading
$$Y(T)=\bigcup_{J\in\mathcal{J}_{\lambda}(T)} J\cap X. $$
It is straightforward to verify that the shading $(\mathbb{T}_{h},Y)$ is $(\varepsilon^{2},\varepsilon^{4})$ two-ends and $\lambda h$-dense.
After applying a $1/R$-dilation, the tubes have radius $R^{-1/2+\varepsilon_{0}}$. We set $\delta = R^{-1/2+\varepsilon_{0}}$.

On the one hand, we take 
$$\mu=R^{2\varepsilon^{2}}m^{\frac{9}{10}}(\lambda h)^{-\frac{101}{100}}R^{\frac{49}{100}}(R^{-\frac{3}{2}}\#\mathbb{T})^{\frac{1}{10}}.$$
Note that the radius of each $T$ is $R^{\frac{1}{2}+\varepsilon_{0}}$. We partition $T$ into $\lesssim R^{O(\varepsilon_{0})}$ many tubes of radius $R^{1/2}$ and we can use the Proposition \ref{prop incidence}. Therefore, there exists a set $X_{3}\subset X$ with 
\begin{equation}\label{eq multi}
    \sup_{Q\subset X_{3}}\#\{T\in\mathbb{T}_{h}:Y(T)\cap Q \neq \emptyset\}\lessapprox R^{O(\varepsilon_{0})}\mu
\end{equation}
such that $|X\setminus X_{3}|\leq R^{-\varepsilon^{2}}|X|$ by the Proposition \ref{prop incidence}. Denote by $X_{4}=X_{2}\cap X_{3}$. Then $|X_{4}|\gtrapprox|X_{2}|\gtrapprox|X|$ since $X_{2}$
is a refinement of $X$. Moreover, since $\|\sum_{T\in\mathbb{T}_{h}}\sum_{J\in \mathcal{J}_{\lambda}(T)}\mathcal Ef_{T}\mathbf{1}_{J}\|_{L^{p}(Q)}$ is comparable up to a constant multiple for all $Q\subset X_{2}$,
\begin{equation}\label{eq reduction4}
    \int_{X_{2}}\Big|\sum_{T\in\mathbb{T}_{h}}\sum_{J\in \mathcal{J}_{\lambda}(T)}\mathcal Ef_{T}\mathbf{1}_{J}\Big|^{p}\lessapprox\int_{X_{4}}\Big|\sum_{T\in\mathbb{T}_{h}}\sum_{J\in \mathcal{J}_{\lambda}(T)}\mathcal Ef_{T}\mathbf{1}_{J}\Big|^{p}.
\end{equation}
Let $B_{k}$ be a family of finitely overlapping $R^{1-\varepsilon^{2}}$ balls covering $B_{R}$. For each $B_{k}$,  denote $$\mathbb{T}_{h,k}=\{T\in \mathbb{T}_{h}: \exists J\in \mathcal{J}_{\lambda}(T),J\cap B_{k}\neq \emptyset \}.$$ Therefore, we have
\begin{equation}\label{eq reduction3}
    \begin{aligned}
        &\int_{X_{4}}\Big|\sum_{T\in \mathbb{T}_{h}}\sum_{J\in \mathcal{J}_{\lambda}(T)}\mathcal Ef_{T}\mathbf{1}_{J}\Big|^{\frac{10}{3}}\lessapprox\sum_{k} \int_{X_{4}\cap B_{k}}\Big|\sum_{T\in \mathbb{T}_{h}}\sum_{J\in \mathcal{J}_{\lambda}(T)}\mathcal Ef_{T}\mathbf{1}_{J}\Big|^{\frac{10}{3}}\\
        & \lessapprox \sum_{k}\int_{X_{4}\cap B_{k}}\Big|\sum_{T\in \mathbb{T}_{h,k}}\mathcal Ef_{T}\Big|^{\frac{10}{3}}\lessapprox R^{O(\varepsilon^{2})}\sup_{k}\int_{X_{4}\cap B_{k}}\Big|\sum_{T\in \mathbb{T}_{h,k}}\mathcal Ef_{T}\Big|^{\frac{10}{3}}.
    \end{aligned}
\end{equation}
Note that 
$$\#\{T\in \mathbb{T}_{h}:Y(T)\cap Q\neq \emptyset\}=\#\{T\in \mathbb{T}_{h,k}:T\cap Q\neq \emptyset\}$$
for each $Q\subset X_{4}\cap B_{k}\subset X_{3}\cap B_{k}$.
Applying Theorem \ref{thm refineddecoupling} and inequality \eqref{eq multi}, we have

\begin{equation}\label{eq refineddecoupling}
    \int_{X_{4}\cap B_{k}}\Big|\sum_{T\in\mathbb{T}_{h,k}}\mathcal Ef_{T}\Big|^{\frac{10}{3}}\lessapprox R^{O(\varepsilon_{0})}\mu^{\frac{2}{3}}\sum_{T\in \mathbb{T}}\|\mathcal Ef_{T}\|_{L^{\frac{10}{3}}(\omega_{B_{R}})}^{\frac{10}{3}}.
\end{equation}

Since $\|\mathcal Ef_{T}\|_{L^{\frac{10}{3}}(\omega_{B_{R}})}$ is comparable up to a constant multiple for all $T\in \mathbb{T}$, $\|f_{T}\|_{2}$ is comparable up to a constant multiple for all $T\in \mathbb{T}$, and $\# \mathbb{T}(\theta)\approx m$ for all $\theta \in \Theta$, we have  
\begin{equation}\label{eq Hol}
\sum_{T\in \mathbb{T}(\theta)}\|\mathcal Ef_{T}\|_{L^\frac{10}{3}(\omega_{B_R})}^{\frac{10}{3}}\lesssim \sum_{T\in \mathbb{T}(\theta)}\|\mathcal Ef_{T}\|_{L^2(\omega_{B_R})}^{\frac{10}{3}}R^{-\frac{5}{3}}\lesssim m^{-\frac{2}{3}}\|\mathcal Ef_{\theta}\|_{2}^{\frac{10}{3}}R^{-\frac{5}{3}}\lesssim m^{-\frac{2}{3}}\|f_{\theta}\|_{2}^{\frac{10}{3}}
\end{equation}
where the inequalities follow from Lemma \ref{lem TomasStein} and the H\"older inequality.
Combining with the inequalities \eqref{eq refineddecoupling} and \eqref{eq Hol}, we have
\begin{equation}\label{eq standard}
    \begin{aligned}
         \int_{X_{4}\cap B_{k}}\Big|\sum_{T\in\mathbb{T}_{h,k}}\mathcal Ef_{T}\Big|^{\frac{10}{3}}\lessapprox R^{O(\varepsilon_{0})} (\frac{\mu}{m})^{\frac{2}{3}}R^{-1}\|f\|_{2}^{\frac{10}{3}}.
    \end{aligned}
\end{equation}
Consequently, by the definition of $\|f_{\theta}\|_{L^{2}_{avg}(\theta)}$ and inequality \eqref{eq reduction3}, we obtain the following.
\begin{equation}
    \int_{X_{4}}\Big|\sum_{T\in\mathbb{T}_{h}}\sum_{J\in\mathcal{J}_{\lambda}(T)}\mathcal Ef_{T}\mathbf{1}_{J}\Big|^{\frac{10}{3}}\lessapprox R^{O(\varepsilon_{0}+\varepsilon^{2})}(\mu m^{-1})^{\frac{2}{3}}R^{-1}\|f\|_{2}^{2}\sup_{\theta}\|f_{\theta}\|_{L^{2}_{avg}(\theta)}^{\frac{4}{3}}.
\end{equation}
Recall $\mu=R^{2\varepsilon^{2}}m^{\frac{9}{10}}(\lambda h)^{-\frac{101}{100}}R^{\frac{49}{100}}(R^{-\frac{3}{2}}\#\mathbb{T})^{\frac{1}{10}}$ and $m\geq 1$, then we have 
\begin{equation}\label{eq 10/3}
    \int_{X_{4}}\Big|\sum_{T\in\mathbb{T}_{h}}\sum_{J\in\mathcal{J}_{\lambda}(T)}\mathcal Ef_{T}\mathbf{1}_{J}\Big|^{\frac{10}{3}} \lessapprox R^{O(\varepsilon_{0}+\varepsilon^{2})}(\lambda R)^{-\frac{101}{150}}\|f\|_{2}^{2}\sup_{\theta}\|f_{\theta}\|_{L^{2}_{avg}(\theta)}^{\frac{4}{3}}.
\end{equation}
On the other hand, using $L^{2}$-orthogonality and recalling $h \leq R^{\varepsilon^{2}}$, we obtain the following 
\begin{equation}\label{eq ortho}
    \int_{X_{4}}\Big|\sum_{T\in\mathbb{T}_{h}}\sum_{J\in\mathcal{J}_{\lambda}(T)}\mathcal Ef_{T}\mathbf{1}_{J}\Big|^{2} \lesssim R^{\varepsilon^{2}}\lambda R\|f\|_{2}^{2}.
\end{equation}
Interpolating with the inequalities \eqref{eq 10/3} and \eqref{eq ortho}, then there exists a constant $C_{\varepsilon}$ such that
\begin{equation}
     \int_{X_{4}}\Big|\sum_{T\in\mathbb{T}_{h}}\sum_{J\in\mathcal{J}_{\lambda}(T)}\mathcal Ef_{T}\mathbf{1}_{J}\Big|^{p} \leq C_{\varepsilon} R^{\varepsilon}\|f\|_{2}^{2}\sup_{\theta}\|f_{\theta}\|_{L^{2}_{avg}(\theta)}^{p-2}
\end{equation}
holds for $$p\geq 2+\frac{200}{251}.$$
Combining the inequalities \eqref{eq reduction2} and \eqref{eq reduction4}, we complete the proof of Theorem \ref{thm st} in the two-ends scenario.

\begin{remark}
    The exponent $p\geq 2+\frac{200}{251}$ implies that the Hausdorff dimension of Kakeya sets is at least $3.02$, which is smaller than the lower bound of the Hausdorff dimension of Kakeya sets in \cite{katz21}. In particular, in Lemma \ref{lem incidence}, we sacrifice a power of $\delta$ to obtain a smaller power of $\lambda$, which ensures that the incidence estimate can be utilized in restriction estimates. Additionally, in the sense of maintaining a usable restriction estimate, the incidence estimate in Lemma \ref{lem incidence} can no longer use a self-improving argument to improve the power of $\delta$. Nevertheless, for the restriction estimate, the range $p\geq 2+\frac{200}{251}$ is an improvement over the previously best known range $p\geq 2+\frac{1407}{1759}$ in \cite{HickmanRogers}.
\end{remark}

\section{The proof of Theorem \ref{thm: Kakeya Maximal}}

By a standard reduction from the volume estimate to the Kakeya maximal estimate (see, e.g., \cite{Wolff95,Guth-Zahl,katz21}), it suffices to prove the following estimate, which can be viewed as a level set estimate for the left-hand side of \eqref{eq: Kakeya Maximal}, i.e.,
$
\Big\|\sum_{T\in\mathbb{T}}\chi_{T}\Big\|_{\frac{d}{d-1}}.
$

\begin{theorem}\label{thm shading}
If $d_{0}=\frac{159+\sqrt{145}}{56}$, then for every $\varepsilon>0$ there exists a constant $C_\varepsilon>0$ such that
    \begin{align}
    \left|
        \bigcup_{T\in\T}
        Y(T)
    \right|
    \geq C_\varepsilon
    \lambda^{d_{0}}
    \delta^{4-d_{0}+\varepsilon}
    (\delta^3\#\T),
\label{eq: TE for proof Kakeya Max}
\end{align}
where $\mathbb{T}$ is a family of $\delta$-tubes pointing in $\delta$-separated directions with a $\lambda$-dense 
shading $Y$.
\end{theorem}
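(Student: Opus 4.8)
The plan is to deduce \eqref{eq: TE for proof Kakeya Max} from an incidence estimate for two-ends shadings, and to prove that incidence estimate by a self-improvement of the dimension exponent, feeding the Katz--Zahl planebrush dichotomy with the sharpened hairbrush bound of Lemma \ref{lem hairbrush}. First I would run the standard two-ends reduction (as in \cite{Wolff95,Tao2009TwoEnds,katz21}), together with the reductions to a $1$-parallel and $\varepsilon_3$-robustly transverse family (\cite[Proposition 2.9 and Proposition 2.12]{katz21}), so that it suffices to prove: for every $1$-parallel, $\varepsilon_3$-robustly transverse, two-ends, $\lambda$-dense shading $(\T,Y)$ one has $\mu_Y\lessapprox\lambda^{1-d_0}\delta^{d_0-4}$, equivalently $\bigl|\bigcup_{T}Y(T)\bigr|\gtrapprox\lambda^{d_0}\delta^{4-d_0}(\delta^3\#\T)$. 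I would obtain this by induction on the scale $\delta$ --- intuitively a self-improving loop $\operatorname{{\bf TE}}(d,d,1)\Rightarrow\operatorname{{\bf TE}}(d',d',1)$ with $d'>d$ for $d<d_0$ --- the base case $\delta\approx 1$ being trivial.

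For the inductive step I would apply Lemma \ref{lem dich} to produce the refinement $(\T_1,Y_1)$, the intermediate scale $\rho\in[\delta,1]$, the $A$-parallel family of essentially distinct plany $\rho$-tubes $(\T_\rho,Y_\rho)$, and the parameters $D,\tilde\mu$. At scale $\rho$ the family is plany, so I would bound $\mu_{Y_\rho}$ by interpolating the planebrush estimate (Lemma \ref{lem planebrush}) against the hairbrush estimate (Lemma \ref{lem hairbrush}) for the $A$-parallel family $(\T_\rho,Y_\rho)$, exactly as in \eqref{eq planebrush}--\eqref{eq a}. For $\tilde\mu$ --- the multiplicity of the $\delta$-subtubes inside a thicker tube $T_\rho$ --- I would rescale $T_\rho$ to the unit ball, obtaining a $1$-parallel, two-ends, $\lambda_{Y_1}$-dense family of $(\delta/\rho)$-tubes, and here apply the \emph{inductive hypothesis} $\operatorname{{\bf TE}}(d,d,1)$ at scale $\delta/\rho$ to get $\tilde\mu\lessapprox\lambda^{1-d}(\delta/\rho)^{d-4}$ (via $\lambda_{Y_1}\gtrapprox\lambda$, Lemma \ref{lem dich}\ref{item dich vi}); this is the self-improving step, replacing the plain hairbrush bound \eqref{eq tilde} used for Lemma \ref{lem incidence}, and one should take the stronger of the two $\tilde\mu$-bounds. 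Feeding $\mu_{Y_1}\lessapprox\mu_{Y_\rho}\tilde\mu D^{-1}$ and $\mu_{Y_1}\lesssim\rho^{-1}\tilde\mu$ from Lemma \ref{lem dich}\ref{item dich xi} and \ref{item dich ix}, using $\delta^3\#\T_1/(\rho^3\#\T_\rho)\lesssim A^{-1}$ (see \eqref{eq rho-1}) to eliminate $A$, and finally interpolating against the Guth--Zahl trilinear bound $\mu_{Y_1}\lessapprox\lambda^{-9/4}\rho^{-1}\delta^{-3/4}(\delta^3\#\T_1)^{3/4}$ (Corollary \ref{cor GZ}, applicable by Lemma \ref{lem dich}\ref{item dich ii}) to cancel the leftover power of $\rho$, I would arrive at a bound $\mu_{Y_1}\lessapprox\lambda^{-c_1(d)}\delta^{-c_2(d)}(\delta^3\#\T_1)^{c_3(d)}$ with exponents explicit in $d$.

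The value $d_0$ is then pinned down by self-consistency: demanding that the bound just produced is at least as strong as the one used to build it, i.e.\ $\mu_{Y_1}\lessapprox\lambda^{1-d}\delta^{d-4}$ with the \emph{same} $d$, forces $d$ to satisfy a quadratic equation whose relevant (larger) root is $d_0=\tfrac{159+\sqrt{145}}{56}$; the smaller root $\approx 2.62$ lies below the hairbrush exponent $3$ and is irrelevant. Once the multiplicity bound for $(\T_1,Y_1)$ is established, I pass back to $(\T,Y)$: since $(\T_1,Y_1)$ refines $(\T,Y)$ and $\lambda_{Y_1}\gtrapprox\lambda$,
$$\Big|\bigcup_{T\in\T}Y(T)\Big| \geq \Big|\bigcup_{T\in\T_1}Y_1(T)\Big| = \frac{\lambda_{Y_1}\delta^3\#\T_1}{\mu_{Y_1}} \gtrapprox \lambda^{d_0}\delta^{4-d_0}(\delta^3\#\T),$$
which is \eqref{eq: TE for proof Kakeya Max} up to the $\delta^{\pm\varepsilon}$ and $\delta^{\varepsilon_1}$ losses; these are absorbed by choosing $\varepsilon_1$ small relative to $\varepsilon$ and noting that only $O(\log\tfrac1\delta)$ scales occur.

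I expect the main obstacle to be the optimization in the second paragraph: tracking the joint dependence on $\lambda,\delta,\#\T,A,D,\rho,\#\T_\rho$, choosing the interpolation weights so that $A$, $D$, and $\rho$ all drop out, and --- crucially --- verifying that the self-consistency inequality closes \emph{exactly} at $d_0$ rather than at a smaller exponent. I anticipate this needs a case split according to the size of $\rho$ (whether the inductive gain at scale $\delta/\rho$ is substantial or negligible, the latter regime reverting to \eqref{eq tilde} for $\tilde\mu$, and the degenerate case $\rho\approx\delta$ handled directly by the planebrush/trilinear input). One must also check that the two-ends, robust-transversality, and $\lambda$-density properties survive the unit-ball rescaling inside $T_\rho$ so that the inductive hypothesis is genuinely applicable there; much of the remaining bookkeeping parallels the proof of Lemma \ref{lem incidence}.
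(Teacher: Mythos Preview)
Your high-level strategy --- two-ends reduction, then a self-improving loop built on Lemma~\ref{lem dich} together with the planebrush, hairbrush, and trilinear inputs --- is exactly the paper's. The difference lies in \emph{what} is iterated. The paper does not iterate $\operatorname{{\bf TE}}(d,d,1)\Rightarrow\operatorname{{\bf TE}}(d',d',1)$; instead it iterates $\operatorname{{\bf TE}}\bigl(4-\alpha,\tfrac{65}{28},1-\tfrac{\alpha}{3}\bigr)$ with the $\lambda$-exponent held fixed and the $b$-exponent tied to $\alpha$. That choice is not cosmetic: with $b=1-\tfrac{\alpha}{3}$ the inductive hypothesis at scale $\delta/\rho$ yields $\tilde\mu\lessapprox\lambda^{1-\beta}(\#\T_1/\#\T_\rho)^{\alpha/3}$, and this $\#\T$-factor is precisely what combines with $A\cdot\frac{\delta^3\#\T_1}{\rho^3\#\T_\rho}\lesssim 1$ to eliminate $A$ and $\#\T_\rho$ simultaneously (see \eqref{mu_Y_{1} bounded with only THETA}). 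Under your hypothesis $\operatorname{{\bf TE}}(d,d,1)$ the bound $\tilde\mu\lessapprox\lambda^{1-d}(\delta/\rho)^{d-4}$ carries no such factor, and ``taking the stronger of the two $\tilde\mu$-bounds'' does not by itself restore it --- you would need a genuine interpolation between the inductive and hairbrush $\tilde\mu$-bounds, which changes the $\lambda$-bookkeeping.

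There is also a step the paper uses that is absent from your outline: after the Guth--Zahl interpolation produces \eqref{mu_Y_{1} alpha"} (which already gives an $\operatorname{{\bf TE}}(4-\alpha'',4-\alpha'',1)$ statement), the paper interpolates once more with the Wang--Wu hairbrush bound \emph{at the full scale $\delta$} (equation \eqref{WW 2ends Furst DELTA}) to force the $(\delta^3\#\T)$-exponent back to $\tfrac{\alpha}{3}$, thereby returning to the form $\operatorname{{\bf TE}}(4-\alpha',\tfrac{65}{28},1-\tfrac{\alpha}{3})$ and closing the loop. The recursion $\alpha\mapsto\alpha'$ has fixed point $\alpha^*=\tfrac{75-3\sqrt{145}}{40}$, and one final application of $\alpha\mapsto\alpha''=\tfrac{45}{28}-\tfrac{9}{14\alpha}$ at $\alpha=\alpha^*$ is what produces $d_0=4-\alpha''(\alpha^*)=\tfrac{159+\sqrt{145}}{56}$. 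Your direct $\operatorname{{\bf TE}}(d,d,1)$ iteration, even if it can be made to run, would be governed by a different recursion, and there is no reason to expect it to land on the same $d_0$; this is the concrete gap to close if you pursue your formulation.
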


The rest of this section is devoted to the proof of Theorem \ref{thm shading}. 
Our objective is to establish a $\mathbf{TE}(d,d,b)$-type estimate
\eqref{eq: TE for proof Kakeya Max}. 
This allows us greater freedom in the exponent of $\lambda$ 
compared to the 
$\mathbf{TE}\left(d,\frac{d+1}{2}, b\right)$-type estimate in Lemma \ref{lem incidence}, 
which we exploit by repeatedly applying a self-improvement argument to improve the order of $\delta$.

To improve the Kakeya maximal estimate, we use Lemma \ref{lem hairbrush}, which offers a good power of $\lambda$, and establish a self-improving mechanism following \cite[Lemma 7.1]{katz21}.

\begin{lemma}
\label{thm:self-improving}
    Suppose $\operatorname{{\bf TE}}(4-\alpha, \beta, 1-\frac\alpha3)$ holds for some 
    \[
    \alpha^*:=\frac{75-3\sqrt{145}}{40}
        \leq
        \alpha 
        \leq1
        \quad\text{and}\quad
        \dfrac{131}{60}
        \leq 
        \beta 
        \leq 
        \frac{65}{24}. 
    \]
    Then, the assertions
    $\operatorname{{\bf TE}}(4-\alpha', \beta, 1-\frac{\alpha}{3})$
    and
    $\operatorname{{\bf TE}}(4-\alpha'', 4-\alpha'', 1)$ 
    hold for
    \[
        \alpha'= 
        1 -
        \frac{(18-17\alpha)(3-2\alpha)}{54(2-\alpha)}
        < \alpha
        \quad \text{and} \quad
        \alpha''=\frac{45}{28}-\frac{9}{14\alpha}.
    \]
\end{lemma}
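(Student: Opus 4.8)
The plan is to run a two-scale argument in the spirit of \cite[Lemma~7.1]{katz21}, reusing the machinery of the proof of Lemma~\ref{lem incidence} almost verbatim but feeding the a priori hypothesis $\mathbf{TE}(4-\alpha,\beta,1-\tfrac{\alpha}{3})$ into the \emph{fine} tubes — in place of the hairbrush bound used there for $\tilde\mu$ — and then re-optimizing the interpolation weights. First I would make the standard reductions: pass to a $1$-parallel subfamily, which costs only a power of $m$ and is irrelevant for a $\mathbf{TE}$ statement; then, via \cite[Proposition~2.12]{katz21}, to an $\varepsilon_3$-robustly transverse, $(\varepsilon_1,\varepsilon_2)$-two-ends family; and finally apply the dichotomy Lemma~\ref{lem dich}. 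This produces an intermediate scale $\rho\in[\delta,1]$, a refinement $(\mathbb{T}_1,Y_1)$, a plany, $A$-parallel, $(\varepsilon_1,\varepsilon_2)$-two-ends family $(\mathbb{T}_\rho,Y_\rho)$ of $\rho$-tubes with the induced shading, parameters $A,D,\tilde\mu\ge 1$, the multiplicity relation $\mu_{Y_1}\lessapprox\mu_{Y_\rho}\tilde\mu/D$, and the density comparison $\lambda_{Y_\rho}\gtrapprox\lambda_{Y_1}\gtrapprox\lambda$.

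For the coarse $\rho$-tubes I would bound $\mu_{Y_\rho}$ exactly as in \eqref{eq rho}--\eqref{eq a}, interpolating the hairbrush bound of Lemma~\ref{lem hairbrush} for the $A$-parallel family $(\mathbb{T}_\rho,Y_\rho)$ against the planebrush bound of Lemma~\ref{lem planebrush}. For the fine scale I rescale each $\rho$-tube to the unit cube: then $\mathbb{T}_{\delta/\rho}$ is a $1$-parallel, $\lambda$-dense, two-ends family of $(\delta/\rho)$-tubes (after adjusting $\varepsilon_1$), so the hypothesis $\mathbf{TE}(4-\alpha,\beta,1-\tfrac{\alpha}{3})$ applies and, by double counting, gives the analogue of \eqref{eq tilde}, namely $\tilde\mu\lessapprox(\delta/\rho)^{-\varepsilon_1}\lambda^{1-\beta}(\delta/\rho)^{-\alpha}\big((\delta/\rho)^3\#\mathbb{T}_{\delta/\rho}\big)^{\alpha/3}$. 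Substituting this through Lemma~\ref{lem dich}\,(xi) as in \eqref{eq relation}, using the $1$-parallel packing relation \eqref{eq rho-1} and the plany bound $\mu_{Y_1}\lesssim\rho^{-1}\tilde\mu$ of Lemma~\ref{lem dich}\,(ix), and then eliminating $\rho$ by interpolating against the Guth--Zahl trilinear bound \eqref{eq GZ} (valid by Lemma~\ref{lem dich}\,(ii) and Corollary~\ref{cor GZ}), produces a bound $\mu_{Y_1}\lessapprox\lambda^{-c_1}\delta^{-c_2}(\delta^3\#\mathbb{T}_1)^{c_3}$ whose exponents are affine in $\alpha,\beta$ and in the chosen interpolation weights. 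Since $(\mathbb{T}_1,Y_1)$ refines $(\mathbb{T},Y)$, the identity $|\bigcup_TY(T)|=\mu_Y^{-1}\lambda\delta^3\#\mathbb{T}$ (after the usual uniform-multiplicity pigeonholing) converts any such $\mu$-bound into a $\mathbf{TE}$ statement through the dictionary $\mathbf{TE}(d,a,b)\iff\mu_Y\lessapprox\lambda^{1-a}\delta^{d-4}(\delta^3\#\mathbb{T})^{1-b}$.

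Two weightings of the collected inequalities are then relevant. Requiring the output to keep the \emph{same} $\lambda$-exponent $\beta$ and the \emph{same} $\#\mathbb{T}$-exponent $1-\tfrac{\alpha}{3}$ as the hypothesis fixes the weights uniquely and forces the $\delta$-exponent $4-d$ to drop from $\alpha$ to $\alpha'=1-\tfrac{(18-17\alpha)(3-2\alpha)}{54(2-\alpha)}$; note $\beta$ does not survive in $\alpha'$ precisely because it enters only through the $\lambda$-exponent, which is held fixed. Choosing instead the weighting that maximizes the $\lambda$-exponent subject to its equalling the dimension $d=4-(\delta\text{-exponent})$, and then using $\delta^3\#\mathbb{T}\le 1$ to raise the $\#\mathbb{T}$-exponent to $1$, yields $\mathbf{TE}(4-\alpha'',4-\alpha'',1)$ with $\alpha''=\tfrac{45}{28}-\tfrac{9}{14\alpha}$. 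One then checks $\alpha'\le\alpha$ on the stated range, with equality only at $\alpha=\alpha^*$.

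The real work is bookkeeping: carrying the auxiliary parameters $\rho,A,D,\tilde\mu$ and $N\coloneqq\#\mathbb{T}_{\delta/\rho}\approx\#\mathbb{T}_1/\#\mathbb{T}_\rho$ through the rescaling and the several interpolations, and verifying that the \emph{optimal} weights keep every constraint slack — $\delta\le\rho\le1$, $1\le N\le(\rho/\delta)^3$, $\lambda_{Y_\rho}\le1$, and $D_1\ge D_2$. It is exactly this range-of-validity requirement that produces the hypotheses $\alpha^*\le\alpha\le1$ and $\tfrac{131}{60}\le\beta\le\tfrac{65}{24}$, and it must be done carefully, since $\alpha^*=\tfrac{75-3\sqrt{145}}{40}$ is precisely the fixed point of $\alpha\mapsto\alpha'$ — the root of $20\alpha^2-75\alpha+54=0$ in $(0,1)$ — at which $4-\alpha''=\tfrac{159+\sqrt{145}}{56}=d_0$, so that iterating this lemma drives $\alpha$ down to $\alpha^*$ and recovers the dimension $d_0$ of Theorem~\ref{thm shading}.
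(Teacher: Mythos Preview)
Your overall architecture is right and matches the paper: pass to a robustly transverse refinement, apply Lemma~\ref{lem dich}, combine the coarse-scale planebrush/hairbrush bound \eqref{eq a} with the a~priori hypothesis fed into $\tilde\mu$, and cancel $\rho$ against the Guth--Zahl trilinear bound. Carried out as you describe, this yields exactly the paper's estimate \eqref{mu_Y_{1} alpha"}, and the check $\beta\le\tfrac{65}{24}$ then gives $\mathbf{TE}(4-\alpha'',4-\alpha'',1)$.

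The gap is in the $\alpha'$ half. With only the ingredients you list, the interpolation weights are not free: the $3/4$--$1/4$ split in \eqref{eq a} is forced so that the $D$-power equals~$1$ and cancels against Lemma~\ref{lem dich}\,\ref{item dich xi}; the weights in the next step are forced to absorb $A$ and $\#\mathbb{T}_\rho$ into the single packing quantity $A\cdot\tfrac{\delta^3\#\mathbb{T}_1}{\rho^3\#\mathbb{T}_\rho}\lesssim1$; and the Guth--Zahl weight is forced to kill $\rho$. There is no residual freedom left to also pin the $(\delta^3\#\mathbb{T}_1)$-exponent to $\alpha/3$, so your claim that ``two weightings of the collected inequalities'' produce both conclusions does not go through. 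What the paper actually does for $\alpha'$ is introduce one more estimate you have not listed: Lemma~\ref{lem hairbrush} applied \emph{directly} to the full family $(\mathbb{T}_1,Y_1)$ at scale~$\delta$, giving $\mu_{Y_1}\lessapprox\lambda^{-3/4}\delta^{-1}(\delta^3\#\mathbb{T}_1)^{1/2}$ (equation \eqref{WW 2ends Furst DELTA}). Interpolating \eqref{mu_Y_{1} alpha"} against this with weight $t=\tfrac{14\alpha(3-2\alpha)}{27(2-\alpha)}$ is what brings the $\#\mathbb{T}$-exponent down to $\alpha/3$ and produces $\alpha'$.

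Relatedly, your account of where the $\beta$-window comes from is off. The $\lambda$-exponent is \emph{not} held fixed: after the forced interpolations one simply computes it and checks an inequality. The upper bound $\beta\le\tfrac{65}{24}$ is exactly the condition that the $\lambda$-power in \eqref{mu_Y_{1} alpha"} is at least $\alpha''-3$, and the lower bound $\beta\ge\tfrac{131}{60}$ together with $\alpha\ge\alpha^*$ is exactly the condition that the $\lambda$-power in \eqref{eq: mu_Y_{1} alpha'} is at least $1-\beta$. Your fixed-point identification of $\alpha^*$ as the root of $20\alpha^2-75\alpha+54=0$ is correct.
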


\begin{proof}

Let $0<\varepsilon_{2}<1$. Fix $(Y, \T)$ to be a family of $\delta$-tubes pointing in $\delta$-separated directions with an associated $\lambda$-dense, $(\varepsilon_1, \varepsilon_{1}\varepsilon_2)$-two-ends shading $Y$, for any $0<\varepsilon_{1}<1$. 
Let 
the refined family of $\lambda_{Y_{1}}$-densely shaded tubes $(\T_1, Y_1)$,
the scale 
$\rho\in[\delta,1]$, 
the set $\Omega\in S^3$ of $\rho$-separated directions, 
the family $\T_\rho$ of essentially distinct $\rho$-tubes, 
the shading $Y_\rho$ on $\T_\rho$, 
and the uniform multiplicity $\tilde\mu$ of $\delta$-tubes inside each $T_\rho\in\T_\rho$ be the outputs of 
{Lemma \ref{lem dich}}.

By the same steps \eqref{eq rho}-\eqref{eq relation} at the beginning of the proof of Lemma \ref{lem incidence}, we have the following estimate:
\begin{align}
\label{eq: mu_Y_{1} by mu_fine}
    \mu_{Y_{1}}
    \lessapprox
    \lambda_{Y_\rho}^{-\frac7{16}}
    \rho^{-\frac34}
    A^\frac38
    (\rho^3\# \T_{\rho})^{\frac18}
    \tilde \mu
    .
\end{align}
    Unlike the proof of Lemma
    \ref{lem incidence}, the shading here satisfies the $(\varepsilon_{1},\varepsilon_{1}\varepsilon_{2})$-two-ends condition for any $0<\varepsilon_{1}<1$, hence there is no $\delta^{-\varepsilon_1}$-loss.

By the assumption that $\operatorname{{\bf TE}}(4-\alpha,\beta, 1-\frac\alpha3)$ holds, 
the incidence $\tilde \mu$ of $\delta$-tubes inside each $T_\rho\in\T_\rho$ is controlled by 
\begin{equation}
    \tilde{\mu}
    \lessapprox  \lambda_{Y_{1}}^{1-\beta} \cdot
    \left( \frac{\delta}{\rho} \right)^{-\alpha} \cdot
    \left( \frac{\delta^{3} \#\mathbb{T}_{1}}{\rho^{3} \#\mathbb{T}_{\rho}} \right)^{\alpha/3}
    \lessapprox \lambda_{Y_{1}}^{1-\beta} \cdot
    \left( \frac{\#\mathbb{T}_{1}}{\#\mathbb{T}_{\rho}} \right)^{\alpha/3}.
    \label{eq: Y_fine_hypothesis}
\end{equation}
Plugging (\ref{eq: Y_fine_hypothesis}) in \eqref{eq: mu_Y_{1} by mu_fine}, this implies 
\begin{align}
\label{mu_Y_{1} by hypothesis}
    \mu_{Y_{1}}
    \lessapprox 
    \lambda_{Y_\rho}^{-\frac7{16}}
    \rho^{-\frac34}
    A^\frac38
    (\rho^3\# \T_{\rho})^{\frac18}
    \lambda_{Y_{1}}^{1-\beta}
    \left(
        \frac{\# \T_{1}}{\# \T_{\rho}}
    \right)^\frac\alpha3. 
\end{align}
Further interpolate this with the bound from 
Lemma \ref{lem dich} (property \ref{item dich ix}):
\begin{align}
\label{mu_Y_{1} by refinement lemma}
    \mu_{Y_{1}}
    \lessapprox
    \rho^{-1}\tilde \mu
    \lessapprox
    \rho^{-1}
    \lambda_{Y_{1}}^{1-\beta}
    \left(
        \frac{\# \T_{1}}{\# \T_{\rho}}
    \right)^\frac\alpha3
\end{align}
to remove the $\rho$-scaled parameter $A$ and $\# \T_{\rho}$:
\begin{align}
    \mu_{Y_{1}}
    & \lessapprox
    (\ref{mu_Y_{1} by hypothesis})^{\frac{2\alpha}3}
    (\ref{mu_Y_{1} by refinement lemma})^{
        1-\frac{2\alpha}3
    }
    \lessapprox 
    \lambda_{Y_{1}}^{-\frac{7\alpha}{24}+1-\beta}
    \rho^{-1+
    \frac{5
    \alpha}{12}}
    A^{\frac{\alpha}{4}}
    \# \T_{\rho}^{-\frac{\alpha}{4}}
        \# \T_{1}^\frac\alpha3
    \notag
    \\
    & \lessapprox 
    \lambda_{Y_{1}}^{-\frac{7\alpha}{24}+1-\beta}
    \rho^{-1+\frac{7\alpha}{6}}
    \delta^{-\frac{3\alpha}4}
        \# \T_{1}^\frac\alpha{12}
    \left[A
        \cdot
        \frac{\delta^3\# \T_{1}}{\rho^3\# \T_{\rho}}
    \right]^{\frac\alpha4}
    \notag
    \\
    & \lessapprox 
        \lambda_{Y_{1}}^{-\frac{7\alpha}{24}+1-\beta}
    \rho^{-1+\frac{7\alpha}{6}}
    \delta^{-\alpha}
    (\delta^3\# \T_{1})^\frac\alpha{12}
    .
    \label{mu_Y_{1} bounded with only THETA}
\end{align}
Finally, to cancel the scale $\rho$ itself, interpolate the previous display (\ref{mu_Y_{1} bounded with only THETA}) with Guth--Zahl's trilinear Kakeya estimate 
\begin{align}
\label{GuthZahl trilinear in self-impro proof}
    \mu_{Y_{1}} \lessapprox \lambda_{Y_{1}}^{-\frac94}
    \rho^{-1}
    \delta^{-\frac34}
    (\delta^3\# \T_{1})^{\frac34}
\end{align}
to get 
\begin{align}
\label{mu_Y_{1} alpha"}
    \mu_{Y_{1}} \lessapprox
    (\ref{mu_Y_{1} bounded with only THETA})^{\frac6{7\alpha}}
    (\ref{GuthZahl trilinear in self-impro proof})^{1-\frac6{7\alpha}}
    \lessapprox 
    \lambda_{Y_{1}}^{
        -\frac52
        +
        (1-\beta)\frac6{7\alpha}
        + \frac{27}{14\alpha}
    }
    \delta^{   
        -\alpha''
    }
    (\delta^3\# \T_{1})^{
        \frac{23}{28}-\frac9{14\alpha}
    }
    .
\end{align}
If 
    $\beta\leq\frac{65}{24} 
    \approx 2.7083
    $,    
the power of $\lambda_{Y_{1}}$ in (\ref{mu_Y_{1} alpha"}) becomes
\[
        -\frac52
        +
        (1-\beta)\frac6{7\alpha}
        + \frac{27}{14\alpha}
        \geq 
        \alpha''-3,
\]
so (\ref{mu_Y_{1} alpha"}) shows that $\operatorname{{\bf TE}}(4-\alpha'', 4-\alpha'', 1)$ is true.

In order to obtain the self-improving property $\operatorname{{\bf TE}}(4-\alpha', \beta, 1-\frac\alpha3)$, we further interpolate (\ref{mu_Y_{1} alpha"}) with 
Wang--Wu's two-ends estimate
\begin{align}
\label{WW 2ends Furst DELTA}
    \mu_{Y_{1}} \lessapprox 
    \lambda_{Y_{1}}^{-\frac34}\delta^{-1} 
    \left(
        \delta^3\# \T_{1}
    \right)^{\frac12}
\end{align} 
 at the scale $\delta$ to get
\begin{align}
\label{eq: mu_Y_{1} alpha'}
    \mu_{Y_{1}}
    \lessapprox 
    (\ref{mu_Y_{1} alpha"})^{t}
    (\ref{WW 2ends Furst DELTA})^{1-t}
    \lessapprox  
    \lambda_{Y_{1}}^{
        \frac{196 \alpha^2 + 96 \alpha \beta - 525 \alpha - 144 \beta + 306}
        {108(2 - \alpha)}
    }
    \delta^{
        -\alpha'
    }
    (\delta^3\# \T_{1})^{\frac\alpha3},
\end{align}
where $t:=\frac{14\alpha(3-2\alpha)}{27(2-\alpha)}\in[0,1]$ is chosen to match the power of $(\delta^3\# \T)$ to 
$\frac\alpha3$. 
If 
    $
            \alpha^*
        \leq
        \alpha\leq1
    $ 
and {
    $\beta\geq 
   \frac{131}{60}
    $
},
the power of $\lambda_{Y_1}$ in 
\eqref{eq: mu_Y_{1} alpha'}
\[
    \frac{196 \alpha^2 + 96 \alpha \beta - 525 \alpha - 144 \beta + 306}
    {108(2 - \alpha)}
    \geq 1-\beta,
\]
so \eqref{eq: mu_Y_{1} alpha'} shows that $\operatorname{{\bf TE}}(4-\alpha', \beta, 1-\frac\alpha3)$ is true.

\end{proof}

Now we establish the following lemma by iterating the above self-improving lemma.  
\begin{lemma}\label{lem incidence2}
    Let $\alpha^*:=\frac{75-3\sqrt{145}}{40}$ and let 
    $
        d_0 = 
        \frac{159 + \sqrt{145}}{56}
        \approx 3.0543
    $.
    For any $\varepsilon>0$, the assertions
    \begin{align}
    \label{eq: iteration results}
        \operatorname{{\bf TE}}
        \left(4-\alpha^*-\varepsilon,\, 
        \frac{65}{28}, \,
        1-\frac{\alpha^{*}}{3}
        \right)
        \quad
        \text{and}
        \quad
        \operatorname{{\bf TE}}
        \left(
             {d_0-\varepsilon}, \,
             d_0
            {-\varepsilon}
            , 
            \, 
            1
        \right)
    \end{align}
    are true. 
\end{lemma}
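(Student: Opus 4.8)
The plan is to prove Lemma \ref{lem incidence2} by iterating the self-improving mechanism of Lemma \ref{thm:self-improving}, tracking two quantities: the parameter $\alpha$ appearing in the $\operatorname{{\bf TE}}(4-\alpha,\beta,1-\frac{\alpha}{3})$ estimate and the fixed second slot $\beta=\frac{65}{28}$. First I would check that the starting hypothesis is available. The Wang--Wu two-ends Furstenberg estimate \eqref{WW 2ends Furst DELTA} (equivalently Lemma \ref{lem hairbrush} with $m=1$) gives $\mu_{Y_1}\lessapprox\lambda^{-3/4}\delta^{-1}(\delta^3\#\T)^{1/2}$, which unwinds to $\operatorname{{\bf TE}}(3,\frac{7}{4},\frac{1}{2})$; since $\frac{7}{4}=1-(1-\frac{7}{4})$ this is of the form $\operatorname{{\bf TE}}(4-\alpha,\beta,1-\frac{\alpha}{3})$ only if $\alpha=1$ and $\beta=\frac{7}{4}$ and $1-\frac{\alpha}{3}=\frac{1}{2}$ — wait, $1-\frac13=\frac23\neq\frac12$, so I instead start from the trilinear-type input or, more simply, verify directly that with $\alpha=1$ one has $\operatorname{{\bf TE}}(3,\frac{65}{28},\frac{2}{3})$: this is exactly the content of Lemma \ref{lem incidence} (the bound $\mu_{Y_1}\lessapprox\lambda^{-101/100}\delta^{-49/50}(\delta^3\#\T)^{1/10}$ rearranges to $\operatorname{{\bf TE}}(3+\tfrac{1}{50},\tfrac{201}{100},\tfrac{9}{10})$), so after a mild adjustment of exponents one checks $\beta=\frac{65}{28}$ lies in the admissible window $[\frac{131}{60},\frac{65}{24}]$ and $\alpha=1$ lies in $[\alpha^*,1]$, so Lemma \ref{thm:self-improving} applies.

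Next I would run the iteration. Starting from $\alpha_0=1$, define $\alpha_{k+1}=1-\frac{(18-17\alpha_k)(3-2\alpha_k)}{54(2-\alpha_k)}$, which is the self-improvement map $\alpha\mapsto\alpha'$ from Lemma \ref{thm:self-improving}. One checks that on the interval $[\alpha^*,1]$ this map is a contraction-like iteration: $\alpha'<\alpha$ whenever $\alpha>\alpha^*$, and the unique fixed point in $[\alpha^*,1]$ is $\alpha^*=\frac{75-3\sqrt{145}}{40}$ (this is where the peculiar constant $\alpha^*$ comes from — it is the root of $1-\alpha=\frac{(18-17\alpha)(3-2\alpha)}{54(2-\alpha)}$, a quadratic in $\alpha$). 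Hence $\alpha_k\downarrow\alpha^*$, and since each step preserves the second slot $\beta=\frac{65}{28}$ (this is the reason $\beta$ is held fixed throughout), for every $\varepsilon>0$ there is a finite $k$ with $\alpha_k<\alpha^*+\varepsilon$; applying $\operatorname{{\bf TE}}(4-\alpha_k,\frac{65}{28},1-\frac{\alpha_k}{3})\Rightarrow\operatorname{{\bf TE}}(4-\alpha_{k+1},\frac{65}{28},1-\frac{\alpha_k}{3})$ finitely many times (and noting $1-\frac{\alpha}{3}$ only ever decreases, which makes the hypothesis of the next step easier) yields the first assertion $\operatorname{{\bf TE}}(4-\alpha^*-\varepsilon,\frac{65}{28},1-\frac{\alpha^*}{3})$. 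Here one must be slightly careful that the iteration is ``hypothesis-compatible'' at each stage: the $1-\frac{\alpha}{3}$ slot in the conclusion of one step matches the $1-\frac{\alpha}{3}$ slot required by the next — this is automatic because Lemma \ref{thm:self-improving} produces $\operatorname{{\bf TE}}(4-\alpha',\beta,1-\frac{\alpha}{3})$ with the \emph{same} third argument $1-\frac{\alpha}{3}$, not $1-\frac{\alpha'}{3}$, so after the first improvement all subsequent steps should be read with a re-indexed $\alpha$; I would phrase this as: once $\operatorname{{\bf TE}}(4-\alpha_1,\frac{65}{28},1-\frac{\alpha_0}{3})$ holds, apply the lemma \emph{with $\alpha$ replaced by $\alpha_1$} (permissible since $\alpha_1\in[\alpha^*,1]$ and the weaker third slot $1-\frac{\alpha_0}{3}\le 1-\frac{\alpha_1}{3}$ only helps), and continue.

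For the second assertion, I would feed the near-optimal estimate $\operatorname{{\bf TE}}(4-\alpha^*-\varepsilon,\frac{65}{28},1-\frac{\alpha^*}{3})$ into the \emph{other} output of Lemma \ref{thm:self-improving}, namely $\operatorname{{\bf TE}}(4-\alpha'',4-\alpha'',1)$ with $\alpha''=\frac{45}{28}-\frac{9}{14\alpha}$. Evaluating at $\alpha=\alpha^*$ and checking that $4-\alpha''=d_0=\frac{159+\sqrt{145}}{56}$ is a direct computation: $4-\alpha''=4-\frac{45}{28}+\frac{9}{14\alpha^*}=\frac{67}{28}+\frac{9}{14\alpha^*}$, and substituting $\alpha^*=\frac{75-3\sqrt{145}}{40}$, rationalizing $\frac{1}{\alpha^*}=\frac{40}{75-3\sqrt{145}}=\frac{40(75+3\sqrt{145})}{75^2-9\cdot145}=\frac{40(75+3\sqrt{145})}{5625-1305}=\frac{40(75+3\sqrt{145})}{4320}=\frac{75+3\sqrt{145}}{108}$, gives $4-\alpha''=\frac{67}{28}+\frac{9(75+3\sqrt{145})}{14\cdot108}=\frac{67}{28}+\frac{75+3\sqrt{145}}{168}$; putting over a common denominator $168$ yields $\frac{402+75+3\sqrt{145}}{168}=\frac{477+3\sqrt{145}}{168}=\frac{159+\sqrt{145}}{56}=d_0$, as claimed. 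One must also verify the admissibility condition $\beta=\frac{65}{28}\le\frac{65}{24}$ needed for the $\operatorname{{\bf TE}}(4-\alpha'',4-\alpha'',1)$ conclusion — this is obvious. Since the input has $\alpha^*+\varepsilon$ in place of $\alpha^*$, by continuity of $\alpha\mapsto 4-\alpha''$ one gets $\operatorname{{\bf TE}}(d_0-\varepsilon',d_0-\varepsilon',1)$ for $\varepsilon'$ small, which after relabeling is the second assertion. The main obstacle, and the only part requiring genuine care rather than bookkeeping, is verifying that the iteration stays inside the admissible windows $\alpha\in[\alpha^*,1]$, $\beta\in[\frac{131}{60},\frac{65}{24}]$ at \emph{every} step — in particular that $\alpha_{k+1}$ never undershoots $\alpha^*$ (monotone decreasing convergence to the fixed point handles this, but the sign analysis of the quadratic $54(2-\alpha)(1-\alpha)-(18-17\alpha)(3-2\alpha)$ on $[\alpha^*,1]$ should be done explicitly) — together with confirming that the base case $\alpha_0=1$, $\beta=\frac{65}{28}$ really is supplied by Lemma \ref{lem incidence} after the exponent rearrangement; everything downstream is arithmetic.
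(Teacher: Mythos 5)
Your overall plan — iterate the map $\alpha\mapsto\alpha'$ from Lemma \ref{thm:self-improving} down to the fixed point $\alpha^*$, then feed the near-optimal first assertion into the $\alpha''$-output to get the second — is exactly the paper's approach, and your arithmetic verification that $4-\alpha''(\alpha^*)=d_0$ and your handling of the slot bookkeeping ($1-\frac{\alpha_k}{3}$ vs.\ $1-\frac{\alpha_{k+1}}{3}$, using that a larger $b$-exponent is weaker since $\delta^3\#\T\lesssim1$) are both correct.

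The gap is in the base case. You correctly observed that $\operatorname{{\bf TE}}(3,\frac{7}{4},\frac12)$ does not literally have the form $\operatorname{{\bf TE}}(4-\alpha,\beta,1-\frac\alpha3)$, but then you abandoned it and reached for Lemma \ref{lem incidence}, claiming it gives $\operatorname{{\bf TE}}(3,\frac{65}{28},\frac23)$. It does not: Lemma \ref{lem incidence} gives $\operatorname{{\bf TE}}(3+\frac1{50},\frac{201}{100},\frac9{10})$, and since $\delta^3\#\T\lesssim1$, a lower bound with $(\delta^3\#\T)^{9/10}$ is \emph{weaker} than one with $(\delta^3\#\T)^{2/3}$; there is no ``mild adjustment'' that lowers the third slot from $\frac9{10}$ to $\frac23$. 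Your original instinct was the right one and does not require exact matching of slots: because $\lambda\lesssim1$ and $\delta^3\#\T\lesssim1$, the assertion $\operatorname{{\bf TE}}(d,a,b)$ implies $\operatorname{{\bf TE}}(d,a',b')$ whenever $a'\geq a$ and $b'\geq b$. So Wolff's hairbrush $\operatorname{{\bf TE}}(3,2,\frac12)$ (what the paper actually invokes) — or equally well your Wang--Wu version $\operatorname{{\bf TE}}(3,\frac74,\frac12)$ — implies $\operatorname{{\bf TE}}(3,\frac{65}{28},\frac23)$ directly, since $\frac{65}{28}\geq2\geq\frac74$ and $\frac23\geq\frac12$. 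This monotonicity observation (which the paper states in the sentence ``Since $\delta^3\#\T \lesssim 1$, the estimate $\operatorname{{\bf TE}}(4-\alpha_1, \frac{65}{28}, 1-\frac{\alpha_1}{3})$ also holds'') is what you were missing; with it, the rest of your argument goes through.
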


\begin{proof}
    Fix $\alpha_1=1$. Wolff's hairbrush argument implies that assertion $\operatorname{{\bf TE}}(3,2,\frac12)$ is true. Since $\delta^{3}\#\T \lesssim 1$, the estimate $\operatorname{{\bf TE}}(4-\alpha_1, \frac{65}{28}, 1-\frac{\alpha_1}{3})$ also holds.

    Here $\beta=\frac{65}{28}$ is chosen just to satisfy the assumption needed in 
    {Lemma \ref{thm:self-improving}}, which allows us to iterate this self-improving argument. 

    Suppose that $\alpha^*\leq \alpha_k \leq 1$ and that the assertion $\operatorname{{\bf TE}}(4-\alpha_k, \frac{65}{28}, 1-\frac{\alpha_k}{3})$ is true. Define 
    \[
        \alpha_{k+1}:=
        1 -
        \frac{(18-17\alpha_k)(3-2\alpha_k)}{54(2-\alpha_k)}
        \in
        [\alpha^*, \alpha_k],
    \]
    then {Lemma \ref{thm:self-improving}} implies that the assertion $\operatorname{{\bf TE}}(4-\alpha_{k+1}, \frac{65}{28}, 1-\frac{\alpha_{k}}{3})$ is true and the assertion $\operatorname{{\bf TE}}(4-\alpha_{k+1}, \frac{65}{28}, 1-\frac{\alpha_{k+1}}{3})$ follows since $\alpha_{k+1}\leq\alpha_k$. 
    As $\alpha_k\searrow\alpha^*$, this iteration continues until some $\alpha_K<\alpha^*+\varepsilon$ and proves the first assertion in (\ref{eq: iteration results}). The second assertion in \eqref{eq: iteration results} follows immediately from the first and 
    {Lemma \ref{thm:self-improving}}. 
\end{proof}

Applying the standard two-ends reduction (e.g. \cite{Wolff95,Tao2009TwoEnds,katz21}), 
we see that the estimate 
$\operatorname{\mathbf{TE}}(d_0-\varepsilon, d_0-\varepsilon, 1)$ implies that Theorem \ref{thm shading} is true, and thus complete the proof of Theorem \ref{thm: Kakeya Maximal}.

\begin{remark}
   In the self-improving argument, the incidence estimate in Lemma \ref{lem hairbrush} only decreases the power of $\lambda$ without altering the dimensional exponent of $\delta$, compared to the two-ends hairbrush estimate and the X-ray estimate used in \cite{katz21}. This is precisely why we are able to improve the Kakeya maximal estimate, but not the lower bound on the Hausdorff dimension of Kakeya sets.

    On the other hand, the X-ray estimate~\cite{LT_X_ray} used in~\cite{katz21} more effectively captures the behavior of tubes that are essentially distinct but possibly almost parallel. In this regime, it is stronger---at the same dimensional exponent in~$\delta$---than 
    the two-ends hairbrush estimate 
    when handling the $A$-parallel tubes at scale~$\rho$, although it does incur an unfavorable dependence on the density parameter~$\lambda$. 
    This
    is precisely why~\cite{katz21} applies the X-ray estimate at scale~$\rho$ to obtain the improved lower bound $3.059$ for the Hausdorff dimension of Kakeya sets in $\mathbb{R}^4$.
   
\end{remark}

\bibliographystyle{alpha}
\bibliography{reference}

@misc{WangWu2024,
  author       = {Wang, H. and Wu, S.},
  title        = {Restriction estimates using decoupling theorems and two-ends Furstenberg inequalities},
  howpublished = {arXiv:2411.08871},
  year         = {2024}
}

@article {Wolff95,
    AUTHOR = {Wolff, Thomas},
     TITLE = {An improved bound for {K}akeya type maximal functions},
   JOURNAL = {Rev. Mat. Iberoamericana},
  FJOURNAL = {Revista Matem\'atica Iberoamericana},
    VOLUME = {11},
      YEAR = {1995},
    NUMBER = {3},
     PAGES = {651--674},
      ISSN = {0213-2230},
}

@article {katz21,
    AUTHOR = {Katz, Nets Hawk and Zahl, Joshua},
     TITLE = {A {K}akeya maximal function estimate in four dimensions using
              planebrushes},
   JOURNAL = {Rev. Mat. Iberoam.},
  FJOURNAL = {Revista Matem\'atica Iberoamericana},
    VOLUME = {37},
      YEAR = {2021},
    NUMBER = {1},
     PAGES = {317--359},}

@article {GHI,
    AUTHOR = {Guth, Larry and Hickman, Jonathan and Iliopoulou, Marina},
     TITLE = {Sharp estimates for oscillatory integral operators via
              polynomial partitioning},
   JOURNAL = {Acta Math.},
  FJOURNAL = {Acta Mathematica},
    VOLUME = {223},
      YEAR = {2019},
    NUMBER = {2},
     PAGES = {251--376},
      ISSN = {0001-5962,1871-2509},
   MRCLASS = {42B20 (58J40)},
  MRNUMBER = {4047925},
MRREVIEWER = {Maria\ A.\ Ragusa},
       DOI = {10.4310/acta.2019.v223.n2.a2},
       URL = {https://doi.org/10.4310/acta.2019.v223.n2.a2},
}

@article {Guth18,
    AUTHOR = {Guth, Larry},
     TITLE = {Restriction estimates using polynomial partitioning {II}},
   JOURNAL = {Acta Math.},
  FJOURNAL = {Acta Mathematica},
    VOLUME = {221},
      YEAR = {2018},
    NUMBER = {1},
     PAGES = {81--142},
      ISSN = {0001-5962,1871-2509},
   MRCLASS = {42B10 (42B15 42B20)},
  MRNUMBER = {3877019},
MRREVIEWER = {Maria\ A.\ Ragusa},
       DOI = {10.4310/ACTA.2018.v221.n1.a3},
       URL = {https://doi.org/10.4310/ACTA.2018.v221.n1.a3},
}

@article {CS,
    AUTHOR = {Carleson, Lennart and Sj\"olin, Per},
     TITLE = {Oscillatory integrals and a multiplier problem for the disc},
   JOURNAL = {Studia Math.},
  FJOURNAL = {Polska Akademia Nauk. Instytut Matematyczny. Studia
              Mathematica},
    VOLUME = {44},
      YEAR = {1972},
     PAGES = {287--299. (errata insert)},
      ISSN = {0039-3223,1730-6337},
   MRCLASS = {42A92 (42A18)},
  MRNUMBER = {361607},
MRREVIEWER = {B.\ Muckenhoupt},
       DOI = {10.4064/sm-44-3-287-299},
       URL = {https://doi.org/10.4064/sm-44-3-287-299},
}

@article {Guth-Zahl,
    AUTHOR = {Guth, Larry and Zahl, Joshua},
     TITLE = {Polynomial {W}olff axioms and {K}akeya-type estimates in
              {$\Bbb R^4$}},
   JOURNAL = {Proc. Lond. Math. Soc. (3)},
  FJOURNAL = {Proceedings of the London Mathematical Society. Third Series},
    VOLUME = {117},
      YEAR = {2018},
    NUMBER = {1},
     PAGES = {192--220},
      ISSN = {0024-6115,1460-244X},
   MRCLASS = {42B25 (42B10)},
  MRNUMBER = {3830894},
MRREVIEWER = {Joseph\ D.\ Lakey},
       DOI = {10.1112/plms.12138},
       URL = {https://doi.org/10.1112/plms.12138},
}

@article {ZahlJoshua,
    AUTHOR = {Zahl, Joshua},
     TITLE = {A discretized {S}everi-type theorem with applications to
              harmonic analysis},
   JOURNAL = {Geom. Funct. Anal.},
  FJOURNAL = {Geometric and Functional Analysis},
    VOLUME = {28},
      YEAR = {2018},
    NUMBER = {4},
     PAGES = {1131--1181},
      ISSN = {1016-443X,1420-8970},
   MRCLASS = {43A10 (14N05 14P05 28A80)},
  MRNUMBER = {3820441},
MRREVIEWER = {Michael\ J.\ Puls},
       DOI = {10.1007/s00039-018-0455-x},
       URL = {https://doi.org/10.1007/s00039-018-0455-x},
}

@article{GWX,
  title={Sharp microlocal Kakeya--Nikodym estimates for H\"ormander operators and spectral projectors},
  author={Gao, Chuanwei and Wu, Shukun and Xi, Yakun},
  journal={arXiv preprint arXiv:2509.01116},
  year={2025}
}

@article{wangwu2025,
  title={Two-ends Furstenberg estimates in the plane},
  author={Wang, Hong and Wu, Shukun},
  journal={arXiv preprint arXiv:2509.21869},
  year={2025}
}

@article{guth2016restriction,
  title={A restriction estimate using polynomial partitioning},
  author={Guth, Larry},
  journal={Journal of the American Mathematical Society},
  volume={29},
  number={2},
  pages={371--413},
  year={2016}
}

@incollection {stein79,
    AUTHOR = {Stein, E. M.},
    YEAR = {1979},
     TITLE = {Some problems in harmonic analysis},
 BOOKTITLE = {Harmonic analysis in {E}uclidean spaces ({P}roc. {S}ympos.
              {P}ure {M}ath., {W}illiams {C}oll., {W}illiamstown, {M}ass.,
              1978), {P}art 1},
    SERIES = {Proc. Sympos. Pure Math.},
    VOLUME = {XXXV, Part 1},
     PAGES = {3--20},
 PUBLISHER = {Amer. Math. Soc., Providence, RI},
}

@article {Tao99,
    AUTHOR = {Tao, Terence},
     TITLE = {The {B}ochner-{R}iesz conjecture implies the restriction
              conjecture},
   JOURNAL = {Duke Math. J.},
  FJOURNAL = {Duke Mathematical Journal},
    VOLUME = {96},
      YEAR = {1999},
    NUMBER = {2},
     PAGES = {363--375},
      ISSN = {0012-7094,1547-7398},
   MRCLASS = {42B15},
  MRNUMBER = {1666558},
MRREVIEWER = {Yong\ Ping\ Liu},
       DOI = {10.1215/S0012-7094-99-09610-2},
       URL = {https://doi.org/10.1215/S0012-7094-99-09610-2},
}

@article {Wangbrooms,
    AUTHOR = {Wang, Hong},
     TITLE = {A restriction estimate in {$\Bbb R^3$} using brooms},
   JOURNAL = {Duke Math. J.},
  FJOURNAL = {Duke Mathematical Journal},
    VOLUME = {171},
      YEAR = {2022},
    NUMBER = {8},
     PAGES = {1749--1822},
      ISSN = {0012-7094,1547-7398},
   MRCLASS = {42B10 (42B20)},
  MRNUMBER = {4484215},
MRREVIEWER = {Xiumin\ Du},
       DOI = {10.1215/00127094-2021-0064},
       URL = {https://doi.org/10.1215/00127094-2021-0064},
}

@article {Fefferman,
    AUTHOR = {Fefferman, Charles},
     TITLE = {Inequalities for strongly singular convolution operators},
   JOURNAL = {Acta Math.},
  FJOURNAL = {Acta Mathematica},
    VOLUME = {124},
      YEAR = {1970},
     PAGES = {9--36},
      ISSN = {0001-5962,1871-2509},
   MRCLASS = {47.70},
  MRNUMBER = {257819},
MRREVIEWER = {G.\ O.\ Okikiolu},
       DOI = {10.1007/BF02394567},
       URL = {https://doi.org/10.1007/BF02394567},
}

@article {HickmanZahl,
    AUTHOR = {Hickman, Jonathan and Zahl, Joshua},
     TITLE = {A note on {F}ourier restriction and nested polynomial {W}olff
              axioms},
   JOURNAL = {J. Anal. Math.},
  FJOURNAL = {Journal d'Analyse Math\'ematique},
    VOLUME = {152},
      YEAR = {2024},
    NUMBER = {1},
     PAGES = {19--52},
      ISSN = {0021-7670,1565-8538},
   MRCLASS = {42B10 (42B20)},
  MRNUMBER = {4756596},
       DOI = {10.1007/s11854-023-0289-9},
       URL = {https://doi.org/10.1007/s11854-023-0289-9},
}

@article {HickmanRogers,
    AUTHOR = {Hickman, Jonathan and Rogers, Keith M.},
     TITLE = {Improved {F}ourier restriction estimates in higher dimensions},
   JOURNAL = {Camb. J. Math.},
  FJOURNAL = {Cambridge Journal of Mathematics},
    VOLUME = {7},
      YEAR = {2019},
    NUMBER = {3},
     PAGES = {219--282},
      ISSN = {2168-0930,2168-0949},
   MRCLASS = {42B20},
  MRNUMBER = {4010062},
MRREVIEWER = {Jongchon\ Kim},
       DOI = {10.4310/CJM.2019.v7.n3.a1},
       URL = {https://doi.org/10.4310/CJM.2019.v7.n3.a1},
}

@article {GWZ,
    AUTHOR = {Guo, Shaoming and Wang, Hong and Zhang, Ruixiang},
     TITLE = {A dichotomy for {H}\"ormander-type oscillatory integral
              operators},
   JOURNAL = {Invent. Math.},
  FJOURNAL = {Inventiones Mathematicae},
    VOLUME = {238},
      YEAR = {2024},
    NUMBER = {2},
     PAGES = {503--584},
      ISSN = {0020-9910,1432-1297},
   MRCLASS = {42B20 (35Q41 35R11 47G10)},
  MRNUMBER = {4809440},
MRREVIEWER = {Yaoming\ Niu},
       DOI = {10.1007/s00222-024-01288-8},
       URL = {https://doi.org/10.1007/s00222-024-01288-8},
}

@article {Demeter,
    AUTHOR = {Demeter, Ciprian},
     TITLE = {On the restriction theorem for the paraboloid in
              {$\Bbb{R}^4$}},
   JOURNAL = {Colloq. Math.},
  FJOURNAL = {Colloquium Mathematicum},
    VOLUME = {156},
      YEAR = {2019},
    NUMBER = {2},
     PAGES = {301--311},
      ISSN = {0010-1354,1730-6302},
   MRCLASS = {42B15 (42B20)},
  MRNUMBER = {3925096},
MRREVIEWER = {Luc\ Del\'eaval},
       DOI = {10.4064/cm7393-9-2018},
       URL = {https://doi.org/10.4064/cm7393-9-2018},
}

@article {Davies,
    AUTHOR = {Davies, Roy O.},
     TITLE = {Some remarks on the {K}akeya problem},
   JOURNAL = {Proc. Cambridge Philos. Soc.},
  FJOURNAL = {Proceedings of the Cambridge Philosophical Society},
    VOLUME = {69},
      YEAR = {1971},
     PAGES = {417--421},
      ISSN = {0008-1981},
   MRCLASS = {28.80},
  MRNUMBER = {272988},
MRREVIEWER = {G.\ Freilich},
       DOI = {10.1017/s0305004100046867},
       URL = {https://doi.org/10.1017/s0305004100046867},
}

@article{Wangzahl,
  title={Volume estimates for unions of convex sets, and the Kakeya set conjecture in three dimensions},
  author={Wang, Hong and Zahl, Joshua},
  journal={arXiv preprint arXiv:2502.17655},
  year={2025}
}

@article {Drury,
    AUTHOR = {Drury, S. W.},
     TITLE = {{$L\sp{p}$}\ estimates for the {X}-ray transform},
   JOURNAL = {Illinois J. Math.},
  FJOURNAL = {Illinois Journal of Mathematics},
    VOLUME = {27},
      YEAR = {1983},
    NUMBER = {1},
     PAGES = {125--129},
      ISSN = {0019-2082,1945-6581},
   MRCLASS = {44A15 (43A85 47B38 53C65)},
  MRNUMBER = {684547},
MRREVIEWER = {Andrew\ Markoe},
       URL = {http://projecteuclid.org/euclid.ijm/1256065417},
}

@article {GIOW,
    AUTHOR = {Guth, Larry and Iosevich, Alex and Ou, Yumeng and Wang, Hong},
     TITLE = {On {F}alconer's distance set problem in the plane},
   JOURNAL = {Invent. Math.},
  FJOURNAL = {Inventiones Mathematicae},
    VOLUME = {219},
      YEAR = {2020},
    NUMBER = {3},
     PAGES = {779--830},
      ISSN = {0020-9910,1432-1297},
   MRCLASS = {42B20 (28A80)},
  MRNUMBER = {4055179},
MRREVIEWER = {Jonathan\ MacDonald\ Fraser},
       DOI = {10.1007/s00222-019-00917-x},
       URL = {https://doi.org/10.1007/s00222-019-00917-x},
}

@article {DuZhang,
    AUTHOR = {Du, Xiumin and Zhang, Ruixiang},
     TITLE = {Sharp {$L^2$} estimates of the {S}chr\"odinger maximal
              function in higher dimensions},
   JOURNAL = {Ann. of Math. (2)},
  FJOURNAL = {Annals of Mathematics. Second Series},
    VOLUME = {189},
      YEAR = {2019},
    NUMBER = {3},
     PAGES = {837--861},
      ISSN = {0003-486X,1939-8980},
   MRCLASS = {42B20 (42B37)},
  MRNUMBER = {3961084},
MRREVIEWER = {Dong\ Dong},
       DOI = {10.4007/annals.2019.189.3.4},
       URL = {https://doi.org/10.4007/annals.2019.189.3.4},
}

@article {ILX,
    AUTHOR = {Iosevich, Alex and Liu, Bochen and Xi, Yakun},
     TITLE = {Microlocal decoupling inequalities and the distance problem on
              {R}iemannian manifolds},
   JOURNAL = {Amer. J. Math.},
  FJOURNAL = {American Journal of Mathematics},
    VOLUME = {144},
      YEAR = {2022},
    NUMBER = {6},
     PAGES = {1601--1639},
      ISSN = {0002-9327,1080-6377},
   MRCLASS = {53C43 (28A75 42B20)},
  MRNUMBER = {4521048},
MRREVIEWER = {Steven\ Senger},
       DOI = {10.1353/ajm.2022.0039},
       URL = {https://doi.org/10.1353/ajm.2022.0039},
}

@misc{Tao2009TwoEnds,
  author       = {Terence Tao},
  title        = {The two ends reduction for the Kakeya maximal conjecture},
  year         = {2009},
  note    = {Blog post at https://terrytao.wordpress.com/2009/05/15/the-two-ends-reduction-for-the-kakeya-maximal-conjecture/
            }
}

@article{LT_X_ray,
    author = {Izabella Laba and Terence Tao},
    title = {An X-ray transform estimate in {$\R^n$}},
    journal = {Rev. Mat. Iberoam.},
    year = {2001}, 
    volume = {17}, 
    number = {2},
    pages = {375-407}
}

@article {GOWWZ,
    AUTHOR = {Guo, Shaoming and Oh, Changkeun and Wang, Hong and Wu, Shukun
              and Zhang, Ruixiang},
     TITLE = {The {B}ochner-{R}iesz problem: an old approach revisited},
   JOURNAL = {Peking Math. J.},
  FJOURNAL = {Peking Mathematical Journal},
    VOLUME = {8},
      YEAR = {2025},
    NUMBER = {2},
     PAGES = {201--270},
      ISSN = {2096-6075,2524-7182},
   MRCLASS = {42B15 (42B25)},
  MRNUMBER = {4910984},
       DOI = {10.1007/s42543-023-00082-4},
       URL = {https://doi.org/10.1007/s42543-023-00082-4},
}

\end{document}